\def\r{\mathbb{R}}
\def\<{\langle}
\def\>{\rangle}
\def\2{L^2}
\def\c{L^2([0,T];H^1(0,1))}
\def\h{\hat}
\def\lam{\lambda}
\def\ds{\displaystyle}
\newtheorem{thm}{\bf Theorem}[section]
\newtheorem{lem}[thm]{\bf Lemma}
\newtheorem{cor}[thm]{\bf Corollary}
\newtheorem{rem}[thm]{\bf Remark}
\newtheorem{prop}[thm]{\bf{Proposition}}
\begin{document}
\title{General Boundary Value Problems of
 the Korteweg-de Vries Equation on a Bounded Domain}

\author{R. A. Capistrano-Filho\footnote{Departamento de Matem\'atica, Universidade Federal de Pernambuco, Recife - Pernambuco, 50740-545, Brazil - capistranofilho@dmat.ufpe.br}, \
Shu-Ming Sun\footnote{Department of Mathematics, Virginia Tech,
Blacksburg, VA 24061 - sun@math.vt.edu} \ and
Bing-Yu Zhang\footnote{Department of Mathematical Sciences, University of Cincinnati,  Cincinnati, Ohio 45221-0025,
  United States - zhangb@ucmail.uc.edu}}

\date{}

\maketitle

\begin{abstract}
In this paper we consider the initial boundary value problem of  the Korteweg-de Vries equation  posed on a finite interval
\begin{equation}\label{a.b}
 u_t+u_x+u_{xxx}+uu_x=0,\qquad u(x,0)=\phi(x), \qquad  0<x<L,  \
 t>0
\end{equation}
subject to  the nonhomogeneous boundary conditions,
\begin{equation} \label{a.b_1}
 B_1u=h_1(t), \qquad B_2 u= h_2 (t),  \qquad B_3 u= h_3 (t) \qquad t>0
 \end{equation} where
\[ B_i u =\sum _{j=0}^2 \left(a_{ij} \partial ^j_x u(0,t) + b_{ij}
\partial ^j_x u(L,t)\right), \qquad i=1,2,3,\]
and $a_{ij}, \ b_{ij}$ $ (j,i=0, 1,2,3)$ are real constants.  Under some   general   assumptions imposed on the coefficients  $a_{ij}, \ b_{ij}$, $ j,i=0, 1,2,3$,    the IBVPs (\ref{a.b})-(\ref{a.b_1})
 is shown to be  locally well-posed in the space $H^s (0,L)$ for any $s\geq 0$  with $\phi \in H^s (0,L)$ and   boundary values $h_j, j=1,2,3$ belonging to some appropriate spaces with  optimal regularity.
\end{abstract}

\section{Introduction}\label{sec1}
\setcounter{equation}{0}

In this paper we consider the initial-boundary value problems (IBVP) of the
Korteweg-de Vries (KdV) equation posed on a finite domain $(0,L)$
\begin{equation}\label{1.1}
 u_t+u_x+u_{xxx}+uu_x=0,\qquad u(x,0)=\phi(x), \qquad  0<x<L,  \
 t>0
    \end{equation}
with  general   non-homogeneous boundary conditions posed on the two ends of the
domain  $(0,L)$,
\begin{equation} \label{1.2}
 B_1u=h_1(t), \qquad B_2 u= h_2 (t),  \qquad B_3 u= h_3 (t) \qquad
 t>0,
 \end{equation} where
\[ B_i u =\sum _{j=0}^2 \left(a_{ij} \partial ^j_x u(0,t) + b_{ij}
\partial ^j_x u(L,t)\right), \qquad i=1,2,3,\]
and $a_{ij}, \ b_{ij}$, $ j, \,i=0, 1,2,$ are real constants.

We are mainly concerned with the following question:

 \medskip
\emph{Under what assumptions on the coefficients $a_{kj}, \ b_{kj}
$ in (\ref{1.2}) is the IBVP (\ref{1.1})-(\ref{1.2}) well-posed
in the classical Sobolev space $H^s (0,L)$?}
 \medskip

 As early as in 1979, Bubnov \cite{Bubnov79} studied  the following IBVP of
the KdV equation on the finite interval $(0,1)$:
\begin{equation}\label{1.3}
\begin{cases}
 u_t +uu_x+u_{xxx}=f, \quad u(x,0)=0, \quad x\in (0,1), \ t\in
 (0,T), \\ \alpha _1 u_{xx}(0,t)+\alpha _2 u_x (0,t)+\alpha _3
 u(0,t)=0, \\ \beta_1 u_{xx} (1,t)+\beta _2 u_x (1,t)+ \beta _3
 u(1,t) =0, \\ \chi _1 u_x (1,t)+ \chi _2 u(1,t)=0
 \end{cases}
 \end{equation}
and obtained the result as described below.

 \medskip
 \noindent
 {\bf Theorem A }\cite{Bubnov79}:
\emph{Assume that
\begin{equation}\label{1.4}
\begin{cases}
if \ \alpha _1 \beta_1 \chi _1 \ne 0, \ then \ F_1>0, \ F_2 >0, \\
if  \ \beta _1\ne 0, \ \chi _1 \ne 0, \ \alpha _1 =0, \ then \
\alpha _2=0, \ F_2 >0, \ \alpha _3 \ne 0, \\ if \ \beta _1 =0, \
\chi _1 \ne 0, \ \alpha _1 \ne 0, \ then \ F_1 >0, \ F_3 \ne 0, \\
if \ \alpha _1=\beta _1 =0, \ \chi _1 \ne 0, \ then \ F_3\ne 0, \
\alpha _2 =0, \ \alpha _3 \ne 0, \\ if \ \beta _1 =0, \ \alpha _1
\ne 0, \ \chi _1 =0, \ then \ F_1 >0, \ F_3 \ne 0, \\ if \ \alpha
_1=\beta _1 =\chi _1 =0, \ then \ \alpha _2 =0, \ \alpha _3 \ne 0, \
F_3 \ne 0,
\end{cases}
\end{equation}
where
\[ F_1=\frac{\alpha _3}{ \alpha _1} -\frac{\alpha _2^2}{2\alpha
_1^2}, \ F_2 =\frac{\beta_2 \chi _2}{\beta _1 \chi _1}
-\frac{\beta _3}{\beta _1} -\frac{\chi _2^2}{2\chi _1^2}, \ F_3
=\beta _2 \chi_2-\beta _1\chi _1 . \]  For any given
\[ f\in H^1_{loc}(0, \infty ; L^2 (0,1)) \ with \ \ f(x,0)=0, \]
 there exists a $T>0$ such that (\ref{1.3}) admits a unique
solution
\[ u\in L^2 (0, T; H^3 (0,1)) \ with  \ u_t \in L^{\infty} (0,T;
L^2 (0,1))\cap L^2 (0,T; H^1 (0,1)) .\] }
 The main tool used by Bubnov \cite{Bubnov79} to prove this theorem is the
following Kato type smoothing property for  the solution $u$ of  the
linear system associated to the  IBVP (\ref{1.2}),
\begin{equation}\label{1.5}
\begin{cases}
 u_t  +u_{xxx}=f, \quad u(x,0)=0, \quad x\in (0,1), \ t\in
 (0,T), \\ \alpha _1 u_{xx}(0,t)+\alpha _2 u_x (0,t)+\alpha _3
 u(0,t)=0, \\ \beta_1 u_{xx} (1,t)+\beta _2 u_x (1,t)+ \beta _3
 u(1,t) =0, \\ \chi _1 u_x (1,t)+ \chi _2 u(1,t)=0.
 \end{cases}
 \end{equation}
 Under the
 assumptions (\ref{1.4}),
\begin{equation}\label{1.6}f\in L^2(0,T; L^2 (0,1))\implies  u\in
L^2 (0,T; H^1 (0,1))\cap L^{\infty} (0,T; L^2 (0,1))\end{equation}
and
\[ \|u\|_{L^2 (0,T; H^1 (0,1))}+ \|u\|_{L^{\infty} (0,T; L^2
(0,1))} \leq C\|f\|_{L^2 (0,T; L^2 (0,T))}, \] where $C>0$ is a
constant independent of $f$.

\medskip
  In the past thirty years since the work of
Bubnov \cite{Bubnov79}, various boundary-value problems of the KdV equation have
been studied. In particular, the following two  special classes of IBVPs of the
KdV equation on the finite interval $(0,L)$,

\begin{equation}\label{1.7}
\begin{cases}
u_t +u_x +uu_x +u_{xxx}=0, \ u(x,0)=\phi (x), \quad x\in (0,L), \
t>0, \\ u(0,t)= h_1(t), \quad u(L,t) = h_2 (t), \quad u_x (L,t)
=h_3 (t) \end{cases}
\end{equation}
and
\begin{equation}\label{1.8}
\begin{cases}
u_t +u_x +uu_x +u_{xxx}=0, \ u(x,0)=\phi (x), \quad x\in (0,L), \
t>0, \\ u(0,t)= h_1(t), \quad u(L,t) = h_2 (t), \quad u_{xx} (L,t)
=h_3 (t), \end{cases}
\end{equation}
as well as the IBVPs of the KdV equation posed in a quarter plane have been intensively studied in the past twenty years (cf. \cite{BSZ03FiniteDomain,bsz-finite,ColGhi97,ColGhi97a,ColGhi01,Fam83,Fam89,Holmer06, KrZh, KrIvZh, RiUsZh}  and the references therein)
following the rapid advances of the study of the pure initial value
problem of the KdV equation posed on the whole line $\r$ or on the
periodic domain $\mathbb{T}$ (see e.g. \cite{BS76, BS78,Bourgain93a,Bourgain93b, Bourgain97,ColKeel03,Fam83,Fam89,Fam99,KPV89,KPV91,KPV91-1,KPV93,KPV93b,KPV96}).

\medskip
The nonhomogeneous IBVP (\ref{1.7}) was first shown by Faminskii in \cite{Fam83,Fam89}
to be well-posed in the  spaces $L^2 (0,L)$ and $H^3 (0,L)$.

\medskip
\noindent {\bf Theorem B} \cite{Fam83,Fam89}: \emph{Let $T>0$ be given. For
any $\phi \in L^2 (0,L)$ and
\[\vec{h}= (h_1, h_2, h_3) \in W^{\frac13, 1}(0,T)\cap
L^{6+\epsilon} (0,T)\cap H^{\frac16} (0,T)\times W^{\frac56
+\epsilon, 1} (0,T)\cap H^{\frac13} (0,T)\times  L^2 (0,T),\]
the IBVP (\ref{1.7}) admits a unique solution $u\in C([0,T]; L^2
(0,L))\cap L^2 (0,T; H^1 (0,L))$. Moreover, the solution map is
continuous in the corresponding spaces.}
  \emph{In addition, if $\phi \in
H^3 (0,L)$, $$ h_1' \in W^{\frac13, 1}(0,T)\cap L^{6+\epsilon}
(0,T)\cap H^{\frac16} (0,T),$$
 $$ h_2'\in W^{\frac56 +\epsilon, 1} (0,T)\cap H^{\frac13 } (0,T)$$  and $$ h_3' \in L^2 (0,T)$$
 with
$$ \phi (0)=h_1 (0), \phi (L)=h_2 (0), \ \phi' (L) = h_3 (0),$$
then the solution $u\in C^1([0,T]; H^3 (0,L))\cap L^2 (0,T; H^4(0,L))$.}

\medskip
Bona \textit{et al.}, in \cite{BSZ03FiniteDomain}, showed that IBVP (\ref{1.7}) is
locally well-posed in the space $H^s (0,L)$ for any $s\geq 0$.

\medskip
\noindent {\bf Theorem C} \cite{BSZ03FiniteDomain}:
 \emph{Let $s\geq
0$ , $r>0$ and $T>0$ be given.}  \emph{There
exists $T^*\in (0, T]$ such that for  any $s-$compatible
\footnote{See \cite{BSZ03FiniteDomain} for exact definition of $s-$compatibility.}
\[ \phi \in H^s (0,L),  \quad \vec{h}= (h_1, h_2, h_3) \in
H^{\frac{s+1}{3}}(0,T) \times H^{\frac{s+1}{3}}(0,T)\times
H^{\frac{s}{3}} (0,T) \] satisfying
\[ \| \phi \| _{H^s (0,L)} + \| \vec{h}\|_{H^{\frac{s+1}{3}}(0,T) \times H^{\frac{s+1}{3}}(0,T)\times
H^{\frac{s}{3}} (0,T)} \leq r,\] the IBVP (\ref{1.7}) admits a
unique solution
\[ u\in C([0,T^*]; H^s (0,L))\cap L^2 (0,T^*; H^{s+1} (0,L)).\]
Moreover, the corresponding solution map is  analytic
in the corresponding spaces.}

\medskip
Later on, in \cite{Holmer06}, Holmer showed that the IBVP (\ref{1.7}) is locally
well-posed in the space $H^s (0,L)$, for any $-\frac34 <s<
\frac12$,  and Bona \textit{et al.}, in \cite{bsz-finite}, showed that the IBVP
(\ref{1.7}) is locally well-posed $H^s (0,L)$ for any $s>-1$.


\medskip
As for the IBVP (\ref{1.8}), its study began with the work of Colin and Ghidalia in late 1990's \cite{ColGhi97,ColGhi97a,ColGhi01}.
They obtained  in  \cite{ColGhi01}  the following results.

\medskip
 \noindent
 {\bf Theorem D} \cite{ColGhi01}:
 \begin{itemize} \item[(i)]  \emph{Given
$h_j\in C^1([0, \infty)), \ j=1,2,3$ and $\phi \in H^1 (0,L)$
satisfying $h_1(0)=\phi (0)$, there exists a $T>0$ such that the
IBVP (\ref{1.8}) admits a solution (in the sense of distribution)}
\[ u\in L^{\infty}(0,T; H^1(0,L))\cap C([0,T]; L^2 (0,L)) .\]

\item[(ii)] \emph{The solution $u$ of the IBVP (\ref{1.8}) exists
globally in $H^1(0,L)$ if the size of its initial value $\phi \in
H^1 (0,L)$ and its boundary values $h_j\in C^1([0, \infty )), \
j=1,2,3$ are all small.}
\end{itemize}
In addition, they showed that the associate linear IBVP
\begin{equation}\label{1.9}
        \begin{cases}
        u_t+u_x+u_{xxx}=0,\qquad u(x,0)=\phi(x)  & x\in (0,L),  \ t\in \r^+ \\
        u(0,t)=0,\ u_x(L,x)=0,\ u_{xx}(L,t)=0
        \end{cases}
    \end{equation}
 possesses a strong smoothing property:
 \par{
\emph{ For any $\phi \in L^2 (0,L)$, the linear IBVP (\ref{1.9})
admits a
 unique solution $$u\in C(\r^+; L^2 (0,L))\cap L^2 _{loc} (\r^+; H^1
 (0,L)).$$}}
\noindent Aided by this smoothing property, Colin \textit{et al.} showed
that the homogeneous IBVP (\ref{1.9}) is locally well-posed in the
space $L^2 (0,L)$.

 \medskip
 \noindent
 {\bf Theorem E} \cite{ColGhi01}:  \emph{For any given $\phi \in L^2
(0,L)$, there exists a $T>0$ such that the IBVP (\ref{1.9}) admits
a unique weak solution $u\in C([0,T]; L^2 (0,L))\cap L^2 (0,T; H^1
(0,L))$.}
\medskip

Recently, Kramer \textit{et al.}, in \cite{KrIvZh}, and Jia \textit{et al.}, in \cite{JIZ},  have shown that the IBVP \eqref{1.8} is locally well-posedness in the classical Sobolev space $H^s(0,L)$ for $s>-1$, which provide a positive answer to one of the open question in \cite{ColGhi01}.

\medskip
\noindent {\bf Theorem F} \cite{JIZ,KrIvZh}:  \emph{Let $s>-1$, $T>0$  and
$r>0$ be given with
$$s\neq\frac{2j-1}{2}, \text{} \text{}j=1,2,3...$$There exists a $0<T^*\leq T$    such that for any
  $\phi \in H^s (0,L)$, \[ h_1\in H^{\frac{s+1}{3}}
(0,T), \ h_2 \in H^{\frac{s}{3}} (0,T), \ h_3 \in H^{\frac{s-1}{3}}
(0,T)\] satisfying
\[ \| \phi \|_{H^s (0,L)}+\|h_1\|_{H^{\frac{s+1}{3}} (0,T)}+\|h_2\|_{H^{\frac{s }{3}}
(0,T)}+\|h_3\|_{H^\frac{s-1}{3} (0,T)} \leq r,\] the IBVP (\ref{1.8})
 admits a unique mild  solution
\[ u\in C([0,T]; H^s (0,L)). \]
Moreover, the corresponding solution map is analytically
continuous.}

In addition,  Rivas \textit{et al.}, in \cite{RiUsZh}, shown that the solutions of the IBVP (\ref{1.8})  exist globally
as long as their initial value and the associate boundary data are small. Moreover, those solutions decay exponentially if their boundary data decay exponentially.

 \medskip
 \noindent
 {\bf Theorem G} \cite{RiUsZh}: \emph{Let $s\geq 0$ with
$$s\neq\frac{2j-1}{2}, \text{} \text{}j=1,2,3...$$
There exist positive constants $\delta$ and $T$ such that for any $s-$compatible
 \footnote{See \cite{RiUsZh} for exact definition, in this case, of $s-$compatibility.} $\phi \in H^s (0,L)$ and $$\vec{h}= (h_1, h_2, h_3) \in
B^s_{(t,t+T)}:=H^{\frac{s+1}{3}}(t,t+T) \times H^{\frac{s}{3}}(t,t+T)\times
H^{\frac{s-1}{3}} (t,t+T) $$ for any $t\geq 0$
 with
 \[ \|\phi \|_{H^s (0,L)} + \sup_{t\geq 0}  \|\vec{h}\|_{B^s_{(t,t+T)}} \leq \delta , \]
   the IBVP (\ref{1.8}) admits a unique solution
 \[ u\in Y^s_{(t,t+T)}:=C([t,t+T]; H^s (0,L))\cap L^2 (t,t+T; H^{s+1}(0,L))\] for any $t\geq0,$ and
  \[\sup_{t\geq0}\|\vec{v}\|_{Y^s_{(t,t+T)}}<\infty.\]
 If, in addition to these conditions, there exist $\gamma_1>0$, $C_1>0$ and $g\in B^s_{(t,t+T)}$ such that
 \[\sup_{t\geq0}\|g\|_{B^s_{(t,t+T)}}<\infty,\]
 and
 $$\|\vec{h}\|_{B^s_{(t,t+T)}}\leq g(t)e^{-\gamma_1t}, \text{}\text{ for } t\geq0,$$
then there exists $\gamma$ with $0<\gamma\leq\gamma_1$ and $C_2>0$ such that the corresponding solution $u$ of the IBVP \eqref{1.8} satisfies
$$\|u\|_{Y^s_{(t,t+T)}}\leq C_2 (\|\phi \|_{H^s (0,L)} + \|\vec{h}\|_{B^s_{(t,t+T)}})e^{-\gamma t}, \text{}\text{ for } t\geq0.$$}
 \medskip

 As for the general IBVP (\ref{1.1})-(\ref{1.2}), Kramer and Zhang, in \cite{KrZh}, studied the following non-homogeneous
boundary value problem,
\begin{equation}\label{1.3-g}
\begin{cases}
 u_t +uu_x+u_{xxx}=0, \quad u(x,0)=\phi (x), \quad x\in (0,1), \ t\in
 (0,T), \\ \alpha _1 u_{xx}(0,t)+\alpha _2 u_x (0,t)+\alpha _3
 u(0,t)=h_1(t), \\ \beta_1 u_{xx} (1,t)+\beta _2 u_x (1,t)+ \beta _3
 u(1,t) =h_2(t), \\ \chi _1 u_x (1,t)+ \chi _2 u(1,t)=h_3 (t).
 \end{cases}
 \end{equation}
 They showed that the IBVP (\ref{1.3-g}) is locally well-posed in
 the space $H^s (0,1)$, for any $s\geq 0$, under the assumption (\ref{1.4}).

 \medskip
 \noindent
 {\bf Theorem H} \cite{KrZh}: \emph{Let $s\geq 0$ with
$$s\neq\frac{2j-1}{2}, \text{} \text{}j=1,2,3...,$$ $T>0$ be given and assume (\ref{1.4}) holds. For any $r>0$,
 there exists a $T^*\in (0,T]$ such that for any $s-$compatible
 \footnote{See \cite{KrZh} for exact definition, in this case, of $s-$compatibility.} $\phi \in H^s (0,1)$, $h_j\in H^{\frac{s+1}{3}}(0,T),
 j=1,2,3$ with
 \[ \|\phi \|_{H^s (0,1)} + \|h_1\|_{H^{\frac{s+1}{3}}(0,T)}
 +\|h_2\|_{H^{\frac{s+1}{3}}(0,T)}+\|h_3\|_{H^{\frac{s+1}{3}}(0,T)}
 \leq r,\]  the IBVP (\ref{1.3-g}) admits a unique solution
 \[ u\in C([0,T^*]; H^s (0,1))\cap L^2 (0,T^*; H^{s+1}(0,1)) .\]
 Moreover, the solution $u$ depends continuously on its initial data
 $\phi $ and the boundary values $h_j, j=1,2,3,$ in the respective
 spaces.}

 \medskip

In this paper we continue to study   the  general IBVP (\ref{1.1})-(\ref{1.2})  for its well-posedness in the space $H^s (0,L)$
and attempt to provide a (partial) answer  asked earlier,

 \medskip
\emph{Under what assumptions on the coefficients $a_{kj}, \ b_{kj}
$ in (\ref{1.2}) is the IBVP (\ref{1.1})-(\ref{1.2}) well-posed
in the classical Sobolev space $H^s (0,L)$?}
 \medskip

We propose the following hypotheses on those coefficients $a_{ij},
\ b_{ij}$, $ j,i=0, 1,2,3$:
\begin{itemize}
\item[(A1)] $ a_{12}=a_{11}=0, \ a_{10}\ne0, \
b_{12}=b_{11}=b_{10}=0$;

\item[(A2)] $a_{12}\ne0, \ b_{12}=0$;

\item[(B1)] $\ a_{22}=a_{21}=a_{20}
=0,\ b_{20}\ne0, b_{22}=b_{21}=0$;

\item[(B2)] $b_{22}\ne 0, \ a_{22}=0 $;

\item[(C)] $a_{32}=a_{31}=0, \ b_{31}\ne 0, \ b_{32}=0.$
\end{itemize}
In addition,  for any  $s\geq 0$,  $$H^s_0(0,T]:=\{h(t)\in H^s(0,T):h^{(j)}(0)=0\},$$
for $j=0,1,...,[s] $. Let us consider
\begin{equation}
\label{x-1}
\begin{cases}
 {\cal H}_1^s (0,T) := H_0^{\frac{s+1}{3}}(0,T]\times
H_0^{\frac{s+1}{3}}(0,T]\times H_0^{\frac{s}{3}}(0,T], \\ {\cal H}^s_2 (0,T):= H_0^{\frac{s+1}{3}}(0,T]\times
H_0^{\frac{s}{3}}(0,T]\times H_0^{\frac{s-1}{3}}(0,T],\\
{\cal H}^s_3 (0,T):= H_0^{\frac{s-1}{3}}(0,T]\times
H_0^{\frac{s+1}{3}}(0,T]\times H_0^{\frac{s}{3}}(0,T], \\ {\cal H}^s_4 (0,T):= H_0^{\frac{s-1}{3}}(0,T]\times
H_0^{\frac{s-1}{3}}(0,T]\times H_0^{\frac{s}{3}}(0,T]
\end{cases}
\end{equation}
and
\begin{equation}
\label{x-1-1}
\begin{cases}
 {\cal W}_1^s (0,T) := H^{\frac{s+1}{3}}(0,T)\times
H^{\frac{s+1}{3}}(0,T)\times H^{\frac{s}{3}}(0,T), \\ {\cal W}^s_2 (0,T):= H^{\frac{s+1}{3}}(0,T)\times
H^{\frac{s}{3}}(0,T)\times H^{\frac{s-1}{3}}(0,T),\\
{\cal W}^s_3 (0,T):= H^{\frac{s-1}{3}}(0,T)\times
H^{\frac{s+1}{3}}(0,T)\times H^{\frac{s}{3}}(0,T), \\ {\cal W }^s_4 (0,T):= H^{\frac{s-1}{3}}(0,T)\times
H^{\frac{s-1}{3}}(0,T)\times H^{\frac{s}{3}}(0,T).
\end{cases}
\end{equation}

 \medskip

We have the following well-posedness results for the  IBVP
(\ref{1.1})-(\ref{1.2}).
\begin{thm}\label{th1}  Assume (A1), (B1) and (C) hold and let $s\geq 0$ with
$s\neq\frac{2j-1}{2}, \text{} \text{}j=1,2,3..., $ and
$T>0$ be given. Then  for any $r>0$ there exists a $T^*\in (0, T]$
 such that for any $$(\phi , \vec{h})\in H^s_0
(0,L)\times{\cal H}^s_1(0,T)$$ satisfying

\[ \|(\phi, \vec{h})\|_{L^2 (0,L)\times{\cal H}^0_1(0,T)} \leq r\] the IBVP (\ref{1.1})-(\ref{1.2})
admits a solution
$$u\in C([0,T^*]; H^s (0,L))\cap L^2 (0,T^*;H^{s+1}(0,L))$$
possessing the hidden regularities (the sharp Kato smoothing properties)
$$\partial_x^lu\in L^{\infty}(0,L;H^{\frac{s+1-l}{3}}(0,T^*)) \text{ for }l=0,1,2.$$
Moreover, the corresponding solution map is analytically  continuous.
\end{thm}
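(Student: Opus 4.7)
The plan is to reduce the problem to the IBVP (\ref{1.7}) --- already solved by Bona, Sun and Zhang in Theorem C --- and then to upgrade that well-posedness statement to include the sharp Kato trace regularities.

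First, the hypotheses (A1), (B1), (C) permit an explicit algebraic reduction of the boundary conditions. Under (A1), $B_1 u = a_{10} u(0,t)$ with $a_{10}\ne 0$, so $B_1u=h_1$ is equivalent to the Dirichlet condition $u(0,t)=\tilde h_1(t):=a_{10}^{-1}h_1(t)$. Similarly (B1) forces $u(L,t)=\tilde h_2(t):=b_{20}^{-1}h_2(t)$. Substituting these identifications into (C) yields the Neumann-type condition
\begin{equation*}
u_x(L,t)=\tilde h_3(t):=b_{31}^{-1}\bigl(h_3(t)-a_{30}\tilde h_1(t)-b_{30}\tilde h_2(t)\bigr).
\end{equation*}
The map $\vec h\mapsto\tilde{\vec h}$ is a bounded linear isomorphism of $\mathcal{H}_1^s(0,T)$ onto itself, and since $\phi\in H^s_0(0,L)$ while each $h_j$ vanishes to the required order at $t=0$, the compatibility conditions needed by Theorem C are automatic. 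Applying Theorem C directly to the reduced IBVP (\ref{1.7}) with data $(\phi,\tilde{\vec h})$ produces, for some $T^*=T^*(r)\in(0,T]$, a unique solution $u\in C([0,T^*];H^s(0,L))\cap L^2(0,T^*;H^{s+1}(0,L))$ with analytic continuous dependence on the data. That $T^*$ depends only on the low-regularity bound $\|(\phi,\vec h)\|_{L^2\times\mathcal{H}_1^0}$ follows because the reduction is bounded in that norm and the contraction in Theorem C closes already at the $L^2$ level, so higher regularity simply persists.

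It remains to derive the sharp Kato smoothing $\partial_x^l u\in L^\infty(0,L;H^{(s+1-l)/3}(0,T^*))$ for $l=0,1,2$. For the associated linearized IBVP (\ref{1.9}) driven by the three boundary inputs, these mixed trace bounds are part of the standard linear toolbox underlying the proof of Theorem C, obtained via Fourier/stationary-phase analysis of the boundary integral operator in the spirit of \cite{KPV89,KPV91} and the finite-interval adaptation in \cite{BSZ03FiniteDomain}. To transfer them to the nonlinear solution, I would re-run the contraction of Theorem C in the larger Banach space
\begin{equation*}
Z^s:=C([0,T^*];H^s(0,L))\cap L^2(0,T^*;H^{s+1}(0,L))\cap \bigcap_{l=0}^{2}L^\infty\bigl(0,L;H^{(s+1-l)/3}(0,T^*)\bigr).
\end{equation*}
The hard part will be precisely this nonlinear step: one needs a bilinear trace estimate controlling $\partial_x(uv)$ in each of the three mixed norms $L^\infty_xH^{(s+1-l)/3}_t$, which I would extract from (or adapt out of) the technical lemmas behind Theorem C in \cite{BSZ03FiniteDomain}. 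Once that estimate is in hand, the contraction closes in $Z^s$ and all conclusions of Theorem \ref{th1} --- well-posedness, analytic continuity, and the hidden regularities --- follow from the fixed-point output.
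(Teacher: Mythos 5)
Your algebraic reduction is correct, and it is special to this case: under (A1), (B1) the first two boundary conditions prescribe $u(0,t)$ and $u(L,t)$ outright, so the lower-order terms in (C) can be eliminated using the data alone and the problem becomes exactly (\ref{1.7}) with $\tilde{\vec h}$ a bounded transformation of $\vec h$ on $\mathcal{H}^s_1(0,T)$. The paper does not exploit this; it treats all four boundary configurations uniformly by splitting $\mathcal{B}_1=\mathcal{B}_{1,0}+\mathcal{B}_{1,1}$, constructing the boundary integral operator ${\cal W}^{(1)}_{bdr}$ explicitly, and absorbing $\mathcal{B}_{1,1}v$ into the contraction via the smallness estimate $\|\mathcal{B}_{1,1}v\|_{\mathcal{H}^0_1(0,T)}\leq CT^{\mu}\|\mathcal{B}_{1,1}v\|_{Y_{0,T}}$ of Lemma \ref{lem1} --- machinery that is genuinely needed for Theorems \ref{th2}--\ref{th4} but that your observation bypasses for Theorem \ref{th1}.

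The gap is in the second half. Theorem C does not assert the trace regularities $\partial_x^lu\in L^{\infty}(0,L;H^{\frac{s+1-l}{3}}(0,T^*))$, which are a stated conclusion of Theorem \ref{th1}, so they cannot be inherited from the citation; and your plan to obtain them --- rerunning the contraction in $Z^s$ on the strength of a bilinear estimate for $\partial_x(uv)$ in each norm $L^\infty_xH^{\frac{s+1-l}{3}}_t$ --- asks for more than is available in \cite{BSZ03FiniteDomain} and more than is needed. The actual mechanism in the paper (Propositions \ref{l1}, \ref{p3}, \ref{p4} together with Lemma \ref{lem1-a}) is: (i) the linear solution operator, both the boundary integral part and the Duhamel part with forcing merely in $L^1(0,T;L^2(0,L))$, maps into the full space $Y_{0,T}$ \emph{including} all three trace norms; (ii) the nonlinearity is only ever measured in $L^1_tL^2_x$, via $\int_0^T\|uu_x\|_{L^2(0,L)}\,d\tau\leq C(T^{1/2}+T^{1/3})\|u\|^2_{Y_{0,T}}$. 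The hidden regularity of the fixed point is thus an output of the linear theory applied to the Duhamel term, not of any bilinear trace estimate, and without establishing point (i) your argument does not close. A second, smaller issue: Theorem C produces a $T^*$ depending on the $H^s$-level size of the data, whereas Theorem \ref{th1} claims $T^*$ depends only on $\|(\phi,\vec h)\|_{L^2(0,L)\times\mathcal{H}^0_1(0,T)}$; your remark that ``higher regularity simply persists'' is precisely the statement to be proved, and the paper proves it by setting $v=u_t$ and applying Proposition \ref{VC} to the linearized equation for the case $s=3$, then interpolating nonlinearly (Tartar) for $0<s<3$.
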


\begin{thm}\label{th2}  Assume (A1), (C) and (B2) hold and let $s\geq 0$ with
$s\neq\frac{2j-1}{2}, \text{} \text{}j=1,2,3..., $ and
$T>0$ be given. Then  for any $r>0$ there exists a $T^*\in (0, T]$
 such that for any $$(\phi , \vec{h})\in H_0^s
(0,L)\times {\cal H}^s_2(0,T) $$satisfying

\[ \|(\phi, \vec{h})\|_{L^2 (0,L)\times
{\cal H}^0_2(0,T)} \leq r\] the IBVP (\ref{1.1})-(\ref{1.2})
admits a solution $$u\in C([0,T^*]; H^s (0,L))\cap L^2 (0,T^*;
H^{s+1}(0,L))$$possessing the hidden regularities (the sharp Kato smoothing properties)
$$\partial_x^lu\in L^{\infty}(0,L;H^{\frac{s+1-l}{3}}(0,T^*)) \text{ for }l=0,1,2.$$
Moreover, the corresponding solution map is analytically  continuous.
\end{thm}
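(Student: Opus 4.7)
The plan is to follow the three-step program that has been successfully used to prove Theorems F and H. Writing the IBVP (\ref{1.1})-(\ref{1.2}) in Duhamel form
$$ u(t) = W_0(t)\phi + W_{bdr}(t)\vec{h} - \int_0^t W_0(t-\tau)\, (uu_x)(\tau)\,d\tau, $$
where $W_0$ is the $C_0$-semigroup generated by the linear homogeneous IBVP and $W_{bdr}$ is the boundary integral operator associated to the linear IBVP with $\phi\equiv 0$ and boundary data $\vec{h}$, the proof reduces to (i) constructing $W_{bdr}$ and proving linear well-posedness with the sharp Kato smoothing; (ii) establishing a bilinear estimate for the nonlinearity $uu_x$; and (iii) closing a contraction mapping on a small ball of $Y^s_{T^*}:=C([0,T^*];H^s(0,L))\cap L^2(0,T^*;H^{s+1}(0,L))$.

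The heart of the argument is step (i), and this is where assumption (B2) forces new work not present in Theorem \ref{th1}. Following the Laplace-transform approach of Bona-Sun-Zhang and Kramer-Ivanov-Zhang, I apply the Laplace transform in $t$ to the linear system
$$ u_t + u_x + u_{xxx} = 0,\quad u(x,0)=0,\quad B_i u = h_i\ \ (i=1,2,3), $$
reducing it to the ODE $s\hat u + \hat u_x + \hat u_{xxx}=0$ with three unknown amplitudes along the three characteristic roots $\lambda_j(s)$ of $\lambda^3+\lambda+s=0$; the boundary conditions then give a $3\times 3$ linear system whose determinant $\Delta(s)$ encodes the solvability of the linear IBVP. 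Under (A1), (C), (B2), the second row of this boundary matrix contains contributions proportional to $\lambda_j(s)^2 e^{\lambda_j(s)L}$ coming from $b_{22}u_{xx}(L,\cdot)$, whereas under (B1) only zeroth-order contributions appear; this changes the leading asymptotics of $\Delta(s)$ along the inversion contour and dictates the shifted regularity exponents $(s+1)/3, s/3, (s-1)/3$ appearing in ${\cal H}^s_2$. The essential technical task is to verify that $\Delta(s)$ does not vanish on the appropriate contour and to extract sharp upper and lower bounds on $|\Delta(s)|$ as $|s|\to\infty$; once this is in place, inverting the Laplace transform yields explicit formulas for $W_{bdr}(t)\vec{h}$, and standard Fourier-analytic manipulations give the linear estimate
$$ \|W_{bdr}(\cdot)\vec{h}\|_{Y^s_{T}} + \sum_{l=0}^{2}\, \|\partial_x^l W_{bdr}(\cdot)\vec{h}\|_{L^\infty(0,L;H^{(s+1-l)/3}(0,T))} \le C\,\|\vec{h}\|_{{\cal H}^s_2(0,T)}. $$

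With the linear estimates in hand, step (ii) is by now classical: the bilinear estimate
$$ \left\|\int_0^t W_0(t-\tau)\,(uv)_x(\tau)\,d\tau\right\|_{Y^s_{T^*}} \le C\,T^{*\theta}\,\|u\|_{Y^s_{T^*}}\|v\|_{Y^s_{T^*}}, $$
valid for $s\ge 0$ and some $\theta>0$, follows from the sharp Kato smoothing (which gains one full spatial derivative in $L^2_t$) combined with Sobolev embedding and product estimates, exactly as in Bona-Sun-Zhang. This reduces the nonlinear problem to a contraction on a ball in $Y^s_{T^*}$, yielding existence, uniqueness, and analytic dependence on $(\phi,\vec{h})$ after shrinking $T^*$ in terms of $r$. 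The main obstacle, as noted, is step (i): the quantitative non-degeneracy of $\Delta(s)$ and the extraction of a sharp $L^\infty_xH^{(s-1)/3}_t$ trace bound for $\partial_x^2 u$ at $x=L$, which is unavoidable here because $B_2$ involves the second derivative. This is precisely the analytic phenomenon that distinguishes Theorem \ref{th2} from Theorem \ref{th1}, and mirrors the transition between the Faminskii regime of IBVP (\ref{1.7}) and the Colin-Ghidaglia regime of IBVP (\ref{1.8}) but now in the fully general framework (A1),(C),(B2).
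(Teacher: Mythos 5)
Your overall architecture (linear estimates with sharp Kato smoothing, bilinear estimate, contraction) matches the paper, but your step (i) takes a route the paper deliberately avoids, and as stated it contains a genuine gap. You propose to Laplace-transform the \emph{full} boundary conditions $B_iu=h_i$, with all of the coefficients $a_{ij},b_{ij}$ present, and then to ``verify that $\Delta(s)$ does not vanish on the appropriate contour.'' For arbitrary lower-order coefficients satisfying only (A1), (C), (B2) this is not a technical verification but a claim that can fail: the lower-order boundary terms act as boundary feedback, and a direct energy identity for $v_t+v_{xxx}=0$ with $v(0,t)=0$, $v_x(L,t)=-b_{30}v(L,t)$, $v_{xx}(L,t)=-a_{21}v_x(0,t)-b_{20}v(L,t)-b_{21}v_x(L,t)$ shows that $\frac{d}{dt}\int_0^Lv^2\,dx$ can be made positive for suitable coefficients, so the full homogeneous problem can have growing modes, i.e.\ the full determinant can vanish at points with $\mathrm{Re}\,s>0$. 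This is exactly the obstruction that forced Bubnov to impose the sign conditions (\ref{1.4}); your plan would end up re-importing such conditions, whereas the point of Theorem \ref{th2} is to remove them. The paper's key structural idea, which is missing from your proposal, is the splitting $\mathcal{B}_2=\mathcal{B}_{2,0}+\mathcal{B}_{2,1}$: the boundary integral operator is constructed only for the principal traces $\mathcal{B}_{2,0}v=(v(0,t),v_x(L,t),v_{xx}(L,t))$, whose determinant $\Delta^2(s)$ is shown to be nonzero for $\mathrm{Re}\,s\ge0$ by a clean uniqueness/energy argument, while the lower-order part $\mathcal{B}_{2,1}v$ is fed back into the fixed-point iteration as boundary data $\vec h-\mathcal{B}_{2,1}v$ and absorbed using the small factor $T^\mu$ of Lemma \ref{lem1} together with the trace bounds $\partial_x^lv\in L^\infty_x H^{(s+1-l)/3}_t$. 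Without this (or an equivalent perturbative device) your contraction cannot close for general admissible coefficients.

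Two smaller discrepancies. First, the paper's linear model is $u_t+u_{xxx}+\delta_ku=f$ with characteristic equation $s+\lambda^3=0$ (explicit roots $\lambda_1^+=i\rho$, etc.), the transport term $u_x$ being moved into the forcing; your $\lambda^3+\lambda+s=0$ is workable but complicates the asymptotic tables for $\Delta^{+,2}_{j,m}/\Delta^{+,2}$ on which Lemma \ref{l5-abcd} and hence the regularity exponents $(s+1)/3,\,s/3,\,(s-1)/3$ of $\mathcal{H}^s_2$ rest. Second, you run a single contraction in $Y^s_{T^*}$ for all $s\ge0$, whereas the paper contracts only at $s=0$, obtains $s=3$ by differentiating in $t$ (setting $v=u_t$ and invoking the variable-coefficient linear result, Proposition \ref{VC}), and fills in $0<s<3$ by Tartar's nonlinear interpolation; a direct $H^s$ contraction for $s>1/2$ would additionally require you to address compatibility at the corners, which the paper sidesteps via the spaces $H^s_0$.
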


\begin{thm}\label{th3}  Assume (A2), (B1) and (C) hold and let $s\geq 0$  with
$s\neq\frac{2j-1}{2}, \text{} \text{}j=1,2,3..., $ and
$T>0$ be given. Then  for any $r>0$ there exists a $T^*\in (0, T]$
 such that for any $$(\phi , \vec{h})\in H_0^s
(0,L)\times {\cal H}^s_3 (0,T)$$ satisfying

\[ \|(\phi, \vec{h})\|_{L^2 (0,L)\times
{\cal H}^0_3 (0,T)} \leq r\] the IBVP (\ref{1.1})-(\ref{1.2})
admits a solution $$u\in C([0,T^*]; H^s (0,L))\cap L^2 (0,T^*;
H^{s+1}(0,L))$$possessing the hidden regularities (the sharp Kato smoothing properties)
$$\partial_x^lu\in L^{\infty}(0,L;H^{\frac{s+1-l}{3}}(0,T^*)) \text{ for }l=0,1,2.$$
Moreover, the corresponding solution map is analytically continuous.
\end{thm}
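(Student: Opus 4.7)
The plan is to adapt the semigroup/boundary-integral framework developed in \cite{BSZ03FiniteDomain,KrZh,KrIvZh,JIZ,RiUsZh} for IBVPs of KdV type to the boundary operators forced by (A2), (B1), (C), and then close the nonlinearity $uu_x$ by a contraction argument. First I would reduce (\ref{1.2}) to its principal form: under these assumptions the non-vanishing principal coefficients are $a_{12}$, $b_{20}$, and $b_{31}$, so after elementary row operations and absorption of the lower-order traces of $u$ into the right-hand sides, the boundary conditions are equivalent to
$$u_{xx}(0,t) = \tilde h_1(t), \qquad u(L,t) = \tilde h_2(t), \qquad u_x(L,t) = \tilde h_3(t),$$
with $\vec{\tilde h}$ obtained from $\vec h$ by bounded linear combinations involving $u(0,\cdot)$, $u_x(0,\cdot)$, $u(L,\cdot)$. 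This reveals why the exponents $(s-1)/3$, $(s+1)/3$, $s/3$ appear in $\mathcal H^s_3$ of (\ref{x-1}): they are precisely the intrinsic time regularities of $u_{xx}(0,\cdot)$, $u(L,\cdot)$, $u_x(L,\cdot)$ for any $u \in C_tH^s(0,L) \cap L^2_tH^{s+1}(0,L)$.

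Next I would analyze the associated linear non-homogeneous IBVP
$$u_t + u_x + u_{xxx} = f, \quad u(x,0) = \phi(x), \quad u_{xx}(0,t) = \tilde h_1,\ u(L,t) = \tilde h_2,\ u_x(L,t) = \tilde h_3,$$
by decomposing the solution as $u = u_0 + u_b + u_f$, where $u_0$ carries the initial datum (extended to the line and propagated by the Airy group, then corrected back), $u_b$ is the boundary forcing, and $u_f$ the source. The Laplace transform in $t$ turns the $u_b$-problem into a third-order ODE in $x$, solved explicitly as a contour integral over the three characteristic branches $\lambda_j(\mu) = \mu^{1/3}e^{i\pi(2j-1)/3}$, $j=0,1,2$. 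The $3 \times 3$ boundary matrix to be inverted is non-singular away from a discrete spectrum precisely because $a_{12}\,b_{20}\,b_{31} \neq 0$, and its large-$|\mu|$ asymptotics drive every subsequent estimate.

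The hard part will be proving the sharp linear estimate
$$\|u\|_{C_tH^s \cap L^2_tH^{s+1}} + \sum_{l=0}^{2}\|\partial_x^l u\|_{L^\infty_xH^{(s+1-l)/3}_t} \le C\bigl(\|\phi\|_{H^s} + \|\vec{\tilde h}\|_{\mathcal H^s_3} + \|f\|_{L^1_tH^s_x}\bigr)$$
on a finite time window $(0,T^*)$, covering both the $C_tH^s \cap L^2_tH^{s+1}$ bound and the three hidden trace regularities stated in the theorem. Each trace must be handled by splitting the contour into low- and high-frequency pieces, applying stationary phase / van der Corput estimates along each characteristic branch, and tracking the precise decay of the cofactors of the boundary matrix that multiply $\tilde h_1,\tilde h_2,\tilde h_3$ on each trace. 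This is where the analysis truly departs from Theorems \ref{th1} and \ref{th2}: the resolvent asymptotics for the triple $(u_{xx}(0,\cdot), u(L,\cdot), u_x(L,\cdot))$ differ from those for the triples treated there and must be recomputed from scratch, following the blueprint of \cite{KrZh,KrIvZh,JIZ}.

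With the linear theory in hand, I would close the argument by Picard iteration in the Banach space $X^s_{T^*} := C([0,T^*];H^s(0,L)) \cap L^2(0,T^*;H^{s+1}(0,L))$ equipped with the additional trace norms from the Kato estimate. The required bilinear bound
$$\bigl\|uu_x\bigr\|_{L^1(0,T^*;H^s(0,L))} \le C(T^*)^\theta \|u\|_{X^s_{T^*}}^2, \qquad \theta > 0,$$
follows for $s \ge 0$ from Sobolev embedding combined with the $L^2_tH^{s+1}_x$ smoothing. Choosing $T^*$ small enough depending on $r$ makes the Duhamel map a contraction on a ball of $X^s_{T^*}$, which produces the unique solution; analytic dependence on $(\phi,\vec h)$ is then automatic from the convergent power-series expansion. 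The hypothesis $s \neq (2j-1)/2$ together with the use of the spaces $H^s_0(0,T]$ for the boundary data sidesteps the algebraic compatibility conditions that would otherwise appear at higher Sobolev regularity, completing the argument.
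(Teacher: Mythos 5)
Your proposal follows the paper's architecture quite closely: reduction of the boundary conditions under (A2), (B1), (C) to the principal triple $(u_{xx}(0,t),u(L,t),u_x(L,t))$ with the lower-order traces moved to the data (the paper's splitting $\mathcal{B}_3=\mathcal{B}_{3,0}+\mathcal{B}_{3,1}$), a Laplace-transform representation of the boundary integral operator ${\cal W}_{bdr}^{(3)}$ over the three characteristic branches, large-$\rho$ asymptotics of the cofactor ratios $\Delta^{+,3}_{j,m}/\Delta^{+,3}$ to justify the exponents $\frac{s-1}{3},\frac{s+1}{3},\frac{s}{3}$ in ${\cal H}^s_3$, sharp Kato smoothing estimates, and a contraction in a space carrying both the $C_tH^s\cap L^2_tH^{s+1}$ norm and the three boundary trace norms (the paper's $Y_{s,T}$). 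Two remarks. First, the nondegeneracy of the $3\times3$ boundary matrix is not a consequence of $a_{12}b_{20}b_{31}\neq 0$ alone; the paper obtains $\Delta^{3}(s)\neq 0$ for $\mathrm{Re}\,s\geq 0$ from a uniqueness argument for the transformed ODE (the Remark following Lemma \ref{l4-d}), and this is what licenses the contour representation. Also, absorbing $\mathcal{B}_{3,1}v$ into the boundary data turns the data itself into an unknown of the iteration, and closing that loop requires the quantitative trace estimate $\|\mathcal{B}_{3,1}v\|_{{\cal H}^0_3(0,T)}\leq CT^{\mu}\|v\|_{Y_{0,T}}$ of Lemma \ref{lem1}, with the small factor $T^{\mu}$; you should state this explicitly rather than calling the combinations merely ``bounded.''

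The one substantive discrepancy is where you run the fixed point. You propose Picard iteration directly in $X^s_{T^*}$, which produces a $T^*$ depending on the $H^s_0(0,L)\times{\cal H}^s_3(0,T)$ norm of the data, whereas the theorem asserts that $T^*$ depends only on $r=\|(\phi,\vec h)\|_{L^2(0,L)\times{\cal H}^0_3(0,T)}$. The paper therefore performs the contraction only at the $s=0$ level and recovers higher regularity by persistence: for $s=3$ by setting $v=u_t$ and applying the variable-coefficient linear theory (Proposition \ref{VC}) together with the equation to recover $u_{xxx}$, and for $0<s<3$ by Tartar's nonlinear interpolation; $s>3$ follows the procedure of \cite{BSZ02}. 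Your route yields existence, uniqueness and analyticity of the flow map, but not the stated uniformity of $T^*$ in the low-regularity norm; to prove the theorem as written you need the two-tier ($L^2$ contraction plus bootstrap) argument.
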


\begin{thm}\label{th4}  Assume (A2), (C) and (B2) hold and let $s\geq 0$  with
$s\neq\frac{2j-1}{2}, \text{} \text{}j=1,2,3..., $ and
$T>0$ be given. Then  for any $r>0$ there exists a $T^*\in (0, T]$
 such that for any $$(\phi , \vec{h})\in H_0^s
(0,L)\times {\cal H}^s_4 (0,T)$$ satisfying

\[ \|(\phi, \vec{h})\|_{L^2 (0,L)\times
{\cal H}^0_4 (0,T)} \leq r\]  the IBVP (\ref{1.1})-(\ref{1.2})
admits a solution $$u\in C([0,T^*]; H^s (0,L))\cap L^2 (0,T^*;
H^{s+1}(0,L))$$possessing the hidden regularities (the sharp Kato smoothing properties)
$$\partial_x^lu\in L^{\infty}(0,L;H^{\frac{s+1-l}{3}}(0,T^*)) \text{ for }l=0,1,2.$$
Moreover, the corresponding solution map is analytically continuous.
\end{thm}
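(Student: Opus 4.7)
The overall strategy follows the linear-plus-nonlinear paradigm developed in \cite{BSZ03FiniteDomain,KrZh}: (i) establish sharp well-posedness and hidden regularities for the \emph{linear} IBVP under (A2)--(C)--(B2), with bounds matching the spaces $\mathcal{H}^s_4(0,T)$; and (ii) close a contraction mapping for the nonlinear KdV in an energy space enriched with Kato smoothing, using the Bona-Sun-Zhang bilinear estimate to handle $uu_x$.

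To isolate the genuinely new difficulty, I would decompose $u=u^{(1)}+u^{(2)}+u^{(3)}$, where $u^{(1)}$ is the full-line unitary group applied to an $H^s(\mathbb{R})$-extension of $\phi$, $u^{(2)}$ is the Duhamel term on $\mathbb{R}$ driven by $-uu_x$, and $u^{(3)}$ solves the homogeneous linear KdV on $(0,L)$ with zero initial value and boundary data $\vec h-(B_1,B_2,B_3)(u^{(1)}+u^{(2)})$. The first two pieces obey classical dispersive and Kato estimates; only $u^{(3)}$ requires new analysis. Following \cite{BSZ03FiniteDomain,KrZh,JIZ}, I would construct $u^{(3)}$ via Laplace transform in $t$: setting $V(x,s)=\int_0^{\infty}e^{-st}u^{(3)}(x,t)\,dt$ one obtains $V'''+V'+sV=0$, whose characteristic roots $\lambda_1(s),\lambda_2(s),\lambda_3(s)$ satisfy $\lambda^3+\lambda+s=0$. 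Writing $V(x,s)=\sum_{j=1}^{3}c_j(s)e^{\lambda_j(s)x}$, applying the three boundary functionals under (A2)--(C)--(B2) produces a $3\times 3$ linear system $M(s)\vec c(s)=\widehat{\vec h}(s)$, and inverting along a suitable contour $\Gamma$ yields
$$u^{(3)}(x,t)=\sum_{j=1}^{3}\frac{1}{2\pi i}\int_{\Gamma}e^{\lambda_j(s)x+st}\,\mu_j\!\left(s,\widehat{\vec h}(s)\right)\,ds.$$

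The crucial analytic point is to verify that $\det M(s)\neq 0$ on $\Gamma$ and obeys a sharp polynomial lower bound as $|s|\to\infty$. Since $a_{12}\neq 0$, $b_{31}\neq 0$, and $b_{22}\neq 0$ by (A2), (C), (B2), the leading symbol of $M(s)$ carries a nondegenerate factor $a_{12}\,b_{22}\,b_{31}$ times explicit powers of the roots $\lambda_j(s)$; a sector-by-sector analysis then gives $|\det M(s)|\gtrsim|s|^{p}$ with $p$ matching precisely the temporal exponents $(s-1)/3,(s-1)/3,s/3$ in $\mathcal{H}^s_4(0,T)$. Direct estimation of the oscillatory integrals yields $u^{(3)}\in C([0,T];H^s(0,L))\cap L^2(0,T;H^{s+1}(0,L))$ together with the hidden trace regularities $\partial_x^l u^{(3)}\in L^{\infty}(0,L;H^{(s+1-l)/3}(0,T))$ for $l=0,1,2$. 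With this sharp linear estimate in hand, the nonlinear equation is recast as a fixed-point problem $u=\Lambda(u)$ in the ball
$$X^s_{T^*,R}:=\left\{u\in C([0,T^*];H^s(0,L))\cap L^2(0,T^*;H^{s+1}(0,L)):\|u\|_{X^s_{T^*,R}}\leq R\right\},$$
and the Bona-Sun-Zhang bilinear estimate $\|uv_x\|_{L^1(0,T^*;H^s)}\lesssim(T^*)^{\alpha}\|u\|_{X^s_{T^*,R}}\|v\|_{X^s_{T^*,R}}$ makes $\Lambda$ a contraction for $T^*$ small (depending on $r,R$); analyticity of the fixed point in the data follows from the algebraic form of $\Lambda$.

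The main obstacle will be the spectral/contour analysis of $M(s)$ under (A2)--(C)--(B2): one must verify nondegeneracy of $\det M(s)$ uniformly on $\Gamma$, track sector-dependent cancellations among the three exponentials $e^{\lambda_j(s)x}$ (in particular in regimes where two of them grow and one decays, versus one grows and two oscillate), and carry the exact power of $|s|$ through every entry of $M(s)^{-1}$ so that the exponents $(s-1)/3,(s-1)/3,s/3$ of $\mathcal{H}^s_4(0,T)$ appear \emph{without loss}. The asymmetric configuration of the boundary operators here---two second-order derivatives split between the two endpoints and one first-order derivative at $x=L$, in a pattern different from that of Theorems \ref{th1}--\ref{th3}---makes this matrix analysis more delicate than in the cited prior work; once it is done, the remainder of the argument (bilinear estimate, Picard iteration, analytic dependence) is essentially parallel to \cite{BSZ03FiniteDomain,KrZh} and proceeds routinely.
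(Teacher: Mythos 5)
Your high-level architecture (Laplace transform in $t$, explicit boundary-integral operator, Kato smoothing, contraction) matches the paper's, but there are two concrete gaps, both concentrated exactly where you locate "the main obstacle," and neither is fixable by the sector-by-sector large-$|s|$ analysis you outline. First, you feed the \emph{full} boundary operators (with all lower-order coefficients $a_{1j},a_{2j},a_{30},b_{1j},b_{2j},b_{30}$) into the matrix $M(s)$ and then assert that $\det M(s)\neq 0$ on the contour. Hypotheses (A2), (C), (B2) constrain only the leading coefficients; they say nothing about the lower-order ones, and for general lower-order coefficients $\det M(s)$ can vanish in the closed right half-plane — this is precisely the obstruction that produced Bubnov's sign conditions (\ref{1.4}), which this paper is designed to remove. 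The paper avoids the issue by splitting $\mathcal{B}_4=\mathcal{B}_{4,0}+\mathcal{B}_{4,1}$: only the principal part $\mathcal{B}_{4,0}v=(v_{xx}(0,t),v_x(L,t),v_{xx}(L,t))$ enters the linear solver (for which a uniqueness argument gives $\Delta^4(s)\neq 0$ for $\operatorname{Re}s\geq 0$), while $\mathcal{B}_{4,1}v$ is moved to the data side of the fixed-point scheme (\ref{y-2}) and absorbed via the estimate $\|\mathcal{B}_{4,1}v\|_{\mathcal{H}^0_4(0,T)}\leq CT^{\mu}\|v\|_{Y_{0,T}}$ of Lemma \ref{lem1}. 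Note that this last estimate requires the trace norms $\|\partial_x^l v\|_{L^\infty_x H^{(s+1-l)/3}_t}$ to be part of the contraction space; your ball $X^s_{T^*,R}$ carries only $C_tH^s\cap L^2_tH^{s+1}$ and so cannot support this perturbative treatment even if you adopted it.

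Second, even for the principal part alone, case 4 is degenerate at $s=0$: constants satisfy $v_{xx}(0)=v_x(L)=v_{xx}(L)=0$ and lie in the kernel of $\partial_x^3$ (and of your operator $\partial_x^3+\partial_x$ as well), so the determinant of the principal boundary matrix vanishes at the origin of your contour and no lower bound $|\det M(s)|\gtrsim|s|^p$ can hold there. This is why the paper takes $\delta_4=1$ (but $\delta_k=0$ for $k=1,2,3$) and works with $u_t+u_{xxx}+u=f$, i.e.\ characteristic equation $s+1+\lambda^3=0$, moving $+\delta_4 u$ to the right-hand side of the nonlinear iteration; your linearization $V'''+V'+sV=0$ does not cure the degeneracy. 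A minor further difference: for $s>0$ the paper does not run the contraction at level $H^s$ with a fractional bilinear estimate, but instead handles $s=3$ by differentiating in $t$ (Proposition \ref{VC}) and interpolates via Tartar's nonlinear interpolation theorem for $0<s<3$.
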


The following remarks are now in order.

\smallskip
{\it
\begin{itemize}
\item[(i)] The  temporal regularity conditions imposed on the boundary values  $\vec{h}$ are optimal (cf. \cite{BSZ02, BSZ04, BSZ06}).
\item[(ii)]  The assumptions imposed on the boundary conditions in  the Theorems \ref{th1}-\ref{th4} can be reformulated  as follows:
\begin{itemize}
\item[(i)]  $((A1), (B1), (C)) \Leftrightarrow \mathcal{B}_{1}v= \vec{h},$
\item[(ii)]  $((A_1),(C),(B_2)) \Leftrightarrow \mathcal{B}_{2}v= \vec{h},$
\item[(iii)]  $((A_2), (B_1), (C)) \Leftrightarrow \mathcal{B}_{3}v= \vec{h},$
\item[(iv)]  $((A_2),(C),(B_2)) \Leftrightarrow \mathcal{B}_{4}v= \vec{h}.$
\end{itemize}
Here,
   \begin{equation*}
      \mathcal{B}_{1}v:=\begin{cases}
          \smallskip v(0,t)\\
          \smallskip  v (L,t)  \\
            v_{x}(L,t)+a_{30}v(0,t)+b_{30}v(L,t),
        \end{cases}
    \end{equation*}

\smallskip

    \begin{equation*}
   \mathcal{B}_{2}v:=\begin{cases}
     \smallskip v(0,t) \\
      \smallskip  v_{x}(L,t)+a_{30}v(0,t)+b_{30}v(L,t) \\
\displaystyle         v_{xx}(L,t)+  \sum _{j=0}^1\left( a_{2j} \partial ^j_x v(0,t) +  b_{2j}\partial ^j_x v(L,t)\right),
        \end{cases}
    \end{equation*}

 \smallskip

     \begin{equation*}
      \mathcal{B}_{3}v:=\begin{cases}
\displaystyle       \smallskip  v_{xx}(0,t) +\sum _{j=0}^1\left( a_{1j} \partial ^j_x v(0,t)+b_{1j}\partial ^j_x v(L,t)\right),\\
      \smallskip v(L,t) \\
       v_{x}(L,t)+a_{30}v(0,t)+b_{30}v(L,t)
        \end{cases}
    \end{equation*}
and
    \begin{equation*}
  \mathcal{B}_{4}v:=\begin{cases}
\displaystyle   \smallskip v_{xx}(0,t)+ \sum _{j=0}^1\left( a_{1j} \partial ^j_x v(0,t)+b_{1j}\partial ^j_x v(L,t)\right) \\
   \smallskip  v_{x}(L,t)+a_{30}v(0,t)+b_{30}v(L,t)  \\
\displaystyle    v_{xx}(L,t)+  \sum _{j=0}^1\left( a_{2j} \partial ^j_x v(0,t) +  b_{2j}\partial ^j_x v(L,t)\right).
        \end{cases}
    \end{equation*}

\end{itemize}
As a comparison, note that the assumptions of Theorem A  are satisfied if  and only if    one of the following boundary conditions are imposed on the equation in (\ref{1.3}).

 \begin{itemize}
 \item[(a)] $u(0,t)=0, \quad u(1,t)=0, \quad u_x (1,t)=0;$

 \item[(b)] \[ \mbox{$u_{xx}(0,t)+au_x(0,t)+bu(0,t)=0, \quad u_x(1,t)=0,
 \quad u(1,t)=0$ }\]
 with
 \begin{equation}\label{z-1} a>b^2/2;\end{equation}

 \item[(c)] \[\mbox{$u(0,t)=0, \quad u_{xx}(1,t)+au_x(1,t)+bu(1,t)=0,
 \quad u_x(1,t)+cu(1,t)=0,$  }\]
 with
 \begin{equation}\label{z-2} ac>b-c^2/2;\end{equation}

 \item[(d)] $u_{xx}(0,t)+a_1u_x(0,t)+a_2 u(0,t)=0, \quad
 u_{xx}(1,t)+b_1u_x(1,t)+b_2 u(1,t)=0,$ \[ u_x(1,t)+cu(1,t)=0, \]
 with
 \begin{equation} \label{z-3} a_2 > a_1^2/2, \quad b_1c > b_2 -c^2/2 .\end{equation}
Then,  it follows of our results that the conditions (\ref{z-1}), (\ref{z-2}) and (\ref{z-3}) for Theorem A can be removed completely.
 \item[(iii)]  In Theorem \ref{th1},  we replace the $s-$compatibility of $(\phi, \vec{h}) $ (cf. Theorem C)  by assuming $(\phi, \vec{h}) \in H^s_0 (0,L)\times H^{\frac{s+1}{3}} _0(0,L] \times H^{\frac{s+1}{3}}_0 (0,L]\times H^{\frac{s}{3}}_0 (0,L]$ for simplicity.  The same remarks hold for Theorems \ref{th2}-\ref{th4} too.
\end{itemize}
}

To prove our theorems, we rewrite the boundary operators $\mathcal{B}_k,  \ k=1,2,3,4$ as
\[ \mathcal{B}_k = \mathcal{B}_{k,0}+ \mathcal{B}_{k,1} \]
with

 \[
      \mathcal{B}_{1,0}v:= ( v(0,t), v (L,t), v_{x}(L,t)), \quad
\mathcal{B}_{2,0}v:=( v(0,t), v_{x}(L,t),
        v_{xx}(L,t) ),\]

\[
      \mathcal{B}_{3,0}v:=( v_{xx}(0,t), v(L,t),   v_{x}(L,t)), \quad   \mathcal{B}_{4,0}v:=( v_{xx}(0,t), v_{x}(L,t),
    v_{xx}(L,t))\]
 and
$$\mathcal{B}_{1,1}v:=\left (0,\ 0, a_{30}v(0,t)+b_{30}v(L,t)\right ),$$
$$\mathcal{B}_{2,1}v:=\left (0,\  a_{30}v(0,t)+b_{30}v(L,t),\ \sum _{j=0}^1\left( a_{2j} \partial ^j_x v(0,t) + b_{2j}\partial ^j_x v(L,t)\right)\right),$$
$$\mathcal{B}_{3,1}v:=\left(\sum _{j=0}^1 \left(a_{1j} \partial ^j_x v(0,t)+b_{1j}
\partial ^j_x v(L,t)\right),\ 0,\ a_{30}v(0,t)+b_{30}v(L,t)\right),$$

\begin{align*}
\mathcal{B}_{4,1}v:=\left(\sum _{j=0}^1 \left(a_{1j} \partial ^j_x v(0,t)+b_{1j}
\partial ^j_x v(L,t)\right),\  a_{30}v(0,t)+b_{30}v(L,t)\right.,\\
\left . \sum _{j=0}^1 \left(a_{2j} \partial ^j_x v(0,t) + b_{2j}\partial ^j_x v(L,t)\right)\right).
\end{align*}
To prove our main result, we will first study the linear IBVP
\begin{equation}\label{y-1}
\begin{cases}
u_t +u_{xxx} +\delta_k u=f, \quad x\in (0,L), \quad t >0\\ u(x,0)= \phi (x), \\ \mathcal{B}_{k,0} u= \vec{h},
\end{cases}
\end{equation}
to establish all the linear estimates needed later for  dealing with the nonlinear IBVP (\ref{1.1})-(\ref{1.2}).  Here $\delta _k=0$ for $k=1,2,3$ and $\delta _4=1$.
 Then we  will consider the  nonlinear map $\Gamma $ defined by the following IBVP
\begin{equation}\label{y-2}
\begin{cases}
u_t +u_{xxx} +\delta_ku= -v_x -vv_x +\delta_kv , \quad x\in (0,L), \quad t >0\\ u(x,0)= \phi (x), \\ \mathcal{B}_{k,0} u= \vec{h}-\mathcal{B}_{k,1} v
\end{cases}
\end{equation}
with
\[ \Gamma (v)=u.\]
We will  show that $\Gamma$ is a contraction in an appropriated space whose  fixed point will be the desired solution of the nonlinear  IBVP (\ref{1.1})-(\ref{1.2}).
The key to show that $\Gamma$ is a contraction in an appropriate space is the sharp Kato smoothing  property of the solution of the IBVP (\ref{y-1}) as described below, for example, for $s=0$:

\smallskip
 {\it For given $\phi \in  L^2 (0,L)$ and $f\in L^1 (0,T; L^2 (0,L))$ and $\vec{h}\in {\cal H}^0_k (0,T)$,  the IBVP (\ref{y-1}) admits a unique solution $u\in C([0,T]; L^2 (0,L))\cap L^2 (0,T; H^1 (0,L))$
with
$$\partial_x^lu\in L^{\infty}(0,L;H^{\frac{1-l}{3}}(0,T)) \text{ for }l=0,1,2.$$
}

In order to demonstrate  the sharp Kato smoothing properties for solutions of the IBVP (\ref{y-1}), we need  to study the following IBVP
\begin{equation}\label{y-3}
\begin{cases}
u_t +u_{xxx}+\delta _ku=0, \quad x\in (0,L), \quad t >0\\ u(x,0)= 0, \\ \mathcal{B}_{k,0} u= \vec{h}
\end{cases}
\end{equation}
for $k=1,2,3,4$.
The corresponding solution map $$\vec{h} \to u$$ will be called the \textit{boundary integral operator} denoted by ${\cal W}_{bdr} ^{(k)}$.  An explicit representation  formula will be given for this boundary integral operator that will play important role  in showing the solution of the IBVP (\ref{y-3}) possesses the sharp Kato smoothing properties. The needed  sharp Kato smoothing properties for solutions of the IBVP (\ref{y-1}) will then follow from the sharp Kato smoothing properties for solutions of the IBVP (\ref{y-3}) and  the well-known sharp Kato smoothing properties for solutions of the Cauchy problem
\[ u_t +u_{xxx} +\delta _ku=0, \quad u(x,0)=\psi (x), \quad x, \ t\in \mathbb{R}.\]

\smallskip
The plan of the present paper is as follows.

\smallskip
--- In Section \ref{sec2}  we will study the linear IBVP (\ref{y-1}) The explicit representation formulas for the boundary integral operators ${\cal W}_{bdr} ^{(k)}$, for $k=1,2,3,4$, will be first presented. The various linear  estimates for solutions of the IBVP (\ref{y-1}) will be derived including the sharp Kato smoothing properties.

\smallskip
--- The Section \ref{sec3} is devoted to well-posedness of the nonlinear problem \eqref{1.1}-\eqref{1.2} will be established.

\smallskip
--- Finally, in the Section \ref{sec4}, some  conclusion remarks will be presented together with some open problems  for further investigations.

    \section{Linear problems}\label{sec2}
\setcounter{equation}{0}
This section  is devoted to study the linear IBVP  (\ref{y-1}) which will  be divided into two subsections. In  subsection \ref{sec21}, we will  present an explicit representation for the boundary integral operators ${\cal W}_{bdr} ^{(k)}$  and then solution formulas for   the solutions of the IBVP (\ref{y-1}). Various linear estimates for solutions of the IBVP (\ref{y-1}) will be derived in subsection \ref{sec22}.

\subsection{Boundary integral operators and their applications}\label{sec21}
\setcounter{equation}{0}
In this subsection, we first  derive explicit representation formulas for   the following four classes of
nonhomogeneous boundary-value problems
   \begin{equation}\label{2.1-1}
       \begin{cases}
            v_t+ v_{xxx}=0, \quad v(x,0)=0, &x \in (0,L), \  t\geq 0,\\
             \mathcal{B}_{1,0}v=(h_{1,1}(t),\ h_{2,1}(t),\ h_{3,1}(t)), & t\geq 0,
        \end{cases}
    \end{equation}

    \begin{equation}\label{2.1-2}
       \begin{cases}
            v_t+ v_{xxx}=0, \quad v(x,0)=0, &x \in (0,L), \  t\geq 0,\\
           \mathcal{B}_{2,0}v=(h_{1,2}(t),\ h_{2,2}(t),\ h_{3,2}(t)),  & t\geq 0,
        \end{cases}
    \end{equation}

    \begin{equation}\label{2.1-3}
       \begin{cases}
            v_t+ v_{xxx}=0, \quad v(x,0)=0, &x \in (0,L), \  t\geq 0,\\
             \mathcal{B}_{3,0}v=(h_{1,3}(t),\ h_{2,3}(t),\ h_{3,3}(t)),  & t\geq 0
                     \end{cases}
    \end{equation}
    and
    \begin{equation}\label{2.1-4}
       \begin{cases}
            v_t+ v_{xxx} +v=0, \quad v(x,0)=0, &x \in (0,L), \  t\geq 0,\\
           \mathcal{B}_{4,0}v=(h_{1,4}(t),\ h_{2,4}(t),\ h_{3,4}(t)),  & t\geq 0.
        \end{cases}
    \end{equation}
Without loss of generality, we assume
that $L=1$ in this subsection.
\medskip

Consideration is first given to the IBVP (\ref{2.1-1}). Applying
the Laplace transform with respect to $t$, (\ref{2.1-1}) is
converted to
    \begin{equation}\label{2.2-1}
        \begin{cases}
            s\h{v}+  \h{v}_{xxx}=0,\\
            \h{v}(0,s)=\h{h}_{1,1}(s),\  \h{v}  (1,s)=\h{h}_{2,1}(s),\
            \h{v}_{x}(1,\xi)=\h{h}_{3,1}(s),
        \end{cases}
    \end{equation}
    where  $$\h{v}(x,\xi)=\int_0^{+\infty} e^{-s t}v(x,t)dt$$
    and
    \[  \hat{h}_j (s) =  \int^ {\infty}_0 e^{-st} h_{j,1}(t) dt, \quad j=1,2,3.\]
The solution of (\ref{2.2-1}) can be written in the form
\[ \h{v}(x,s)=\sum_{j=1}^3 c_j(s) e^{\lam_j(s)x},\]
where $\lambda _j (s), j=1,2,3$ are solutions of the characteristic
equation
\[ s   + \lam^3=0\]
and $c_j (s), j=1,2,3$, solves the linear system
\[ \underbrace{\begin{pmatrix}
1 &  1& 1\\
 e^{\lam_1} &  e^{\lam_2} &  e^{\lam_3}\\
\lam_1 e^{\lam_1} & \lam_2 e^{\lam_2} & \lam_3  e^{\lam_3}\\
\end{pmatrix}}_{A^1}
\begin{pmatrix}
c_1\\ c_2\\c_3
\end{pmatrix}
= \underbrace{\begin{pmatrix} \h{h}_{1,1}\\ \h{h}_{2,1} \\
\h{h}_{3,1}
\end{pmatrix}}_{\vec{\widehat{h}}_1}.
\]
By Cramer's rule,
$$c_j= \frac{\Delta_j^1(s)}{\Delta^1(s)}, \ j=1,2,3,$$
 with $\Delta ^1$ the determinant of $A^1$ and $\Delta_j^1$ the determinant of the
 matrix
 $A^1$ with the column $j$ replaced by ${\vec{\widehat{h}}_1}$.
 Taking the inverse Laplace transform of $\widehat{v}$ and following the same arguments as
 that in \cite{BSZ03FiniteDomain} yield  the representation
 \[ v(x,t)=\sum ^3_{m=1} v_m^1 (x,t)\]
 with
 \[ v_m^1 (x,t)=\sum ^3_{j=1}v_{j,m}^1(x,t)\]
 and
 \[ v_{j,m}^1(x,t)= v_{j,m}^{+,1} (x,t)+v_{j,m}^{-,1}(x,t)\]
 where
 \[ v_{j,m}^{+,1}(x,t)=\frac{1}{2\pi i}  \int_{0}^{+i\infty}  e^{s t}
 \frac{\Delta_{j,m}^1(s)}{\Delta ^1(s)}  \h{h}_{m,1}(s) e^{\lam_j(s)x}ds\]
 and
 \[ v_{j,m}^{-,1}(x,t)=\frac{1}{2\pi i}  \int_{-i\infty}^{0}  e^{s t}
 \frac{\Delta_{j,m}^1(s)}{\Delta ^1(s)}  \h{h}_{m,1}(s) e^{\lam_j(s)x}ds,\]
 for $j, m =1,2,3$.   Here $\Delta_{j,m}^1(s)$ is obtained from $\Delta _j^1(s)$ by letting $\hat{h}_m (s) = 1$ and
$\hat{h}_k(s) = 0$ for $k \ne m, $ $k,m = 1, 2, 3$. More
precisely,
\[ \Delta ^1= (\lambda _3-\lambda _2)e^{-\lambda _1}+
(\lambda_1-\lambda _3)e^{-\lambda _2}+ (\lambda _2-\lambda
_1)e^{-\lambda _3} ;\]
\[ \Delta _{1,1}^1= (\lambda _3-\lambda _2)e^{-\lambda _1}, \ \Delta _{2,1}^1= (\lambda _1-\lambda _3)e^{-\lambda
_2}, \ \Delta _{3,1}^1= (\lambda _2-\lambda _1)e^{-\lambda _3};\]
\[ \Delta ^1_{1,2} =\lambda _2e^{\lambda _2}-\lambda _3e^{\lambda
_3}, \ \Delta ^1_{2,2} =\lambda _3e^{\lambda _3}-\lambda
_1e^{\lambda _1}, \ \Delta ^1_{3,2} =\lambda _1e^{\lambda
_1}-\lambda _2e^{\lambda _2};\]
\[ \Delta _{1,3}^1= e^{\lambda _3}-e^{\lambda _2}, \  \Delta _{2,3}^1= e^{\lambda _1}-e^{\lambda
_3}, \  \Delta _{3,3}^1= e^{\lambda _2}-e^{\lambda _1}.\]
   Making the substitution $s=i \rho ^3  $, with $0\leq \rho < \infty ,$ in the  the characteristic equation
   $$  s   + \lam^3=0,$$
   the three roots $\lambda _j$, $j=1,2,3$, are

   \[ \lambda _1^+ (\rho) =i\rho, \quad \lambda ^+_2
(\rho)=\frac{\sqrt{3}-i}{2} \rho, \quad \lambda ^+_3 (\rho) =
-\frac{\sqrt{3}+i}{2} \rho. \]
Thus $v_{j,m}^{+,1} (x,t)$ has the form

\[ v_{j,m}^{+,1} (x,t)= \ds\frac{1}{2\pi}   \int^{\infty}_ 0 e^{i \rho^3  t}
    \frac{\Delta_{j,m}^{+,1}(\rho)}{\Delta ^{+,1}(\rho)} \h{h}_{m,1}^+(\rho) e^{\lam_j^+(\rho)x}
     3\rho^2 d\rho\]
    and \[ v_{j,m}^{-,1}(x,t)= \overline{v_{j,m}^{+,1} (x,t)},\]
    where $\h{h}_{m,1}^+(\rho)=\h{h}_m(i \rho ^3 )$, $\Delta
    ^{+,1}(\rho)$ and $\Delta ^{+,1}_{j,m} (\rho)$ are obtained from
    $\Delta ^1
    (s)$ and $\Delta _{j,m}^1 (s)$ by replacing $s$ with $i \rho
    ^3 $ and  $\lambda _j ^+ (\rho)=\lambda _j (i \rho
    ^3 )$.

For given $m, j=1,2,3$,  let $W_{j,m}^1 $ be  an  operator on
$H^s_0 (\mathbb{R}^+)$ defined as follows:  for any $h\in H^s _0
(\mathbb{R}^+)$,
\begin{equation}
 [W_{j,m}^1h](x,t) \equiv [U_{j,m}^1 h](x,t) +\overline{[U_{j,m}^1h](x,t)}\label{2.2-1a}
\end{equation}
with
\begin{equation} [U_{j,m}^1 h](x,t)\equiv \frac{1}{2\pi } \int ^{+\infty }_{0} e^{i\rho ^3
t} e^{\lambda ^+_j (\rho ) x}   3\rho ^2
[Q_{j,m}^{+,1}h](\rho )d\rho \label{2.3-1a}
\end{equation}
for $j=1,3, \ m=1,2,3 $ and
\begin{equation}
 [U_{2,m}^1 h](x,t) \equiv  \frac{1}{2\pi } \int ^{+\infty }_{0} e^{i \rho ^3
 t} e^{-(\lambda ^+_2 (\rho ) (1-x)}    3\rho ^2
[Q_{2,m}^{+,1}h](\rho )d\rho \label{2.4-1}
\end{equation}
for $m=1,2,3$. Here
\begin{equation}\label{2.5-1} [Q_{j,m}
^{+,1}h] (\rho ):=\frac{\Delta ^{+,1}_{j,m} (\rho ) } {\Delta
^{+,1} (\rho )} \hat{h}   ^+ (\rho ), \qquad [Q_{2,m}^{+,1}h]
(\rho )=\frac{\Delta ^{+,1}_{2,m} (\rho ) } {\Delta ^{+,1} (\rho
)} e^{\lambda ^+_2 (\rho )} \hat{h} ^+ (\rho )\end{equation} for
$j=1,3 $ and $m=1,2,3$, $\hat{h}^+ (\rho) = \hat{h} (i\rho ^3)$.
Then the solution of the IBVP (\ref{2.1-1}) has the following
representation.

\begin{lem}\label{lem1a}
Given  $\vec{h}_1=(h_{1,1},h_{2,1},h_{3,1})$, the  solution $v$ of
the IBVP (\ref{2.1-1}) can be written in the form
\[
v(x,t)=[W_{bdr}^1\vec{h}_1](x,t):= \sum_{j,m=1}^3
[W_{j,m}^1h_{m,1}](x,t).
\]
\end{lem}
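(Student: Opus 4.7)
The plan is to carry out the Laplace-transform / Cramer's-rule construction already sketched in the paragraphs preceding the statement, and then verify that the formal object it produces actually solves (\ref{2.1-1}).

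First I would apply the Laplace transform in $t$ to (\ref{2.1-1}), obtaining the third-order ODE boundary value problem (\ref{2.2-1}) for $\hat v(x,s)$. Its general solution has the form $\hat v(x,s)=\sum_{j=1}^3 c_j(s)e^{\lambda_j(s)x}$, where $\lambda_j(s)$ solve the characteristic equation $s+\lambda^3=0$; imposing the three boundary conditions yields the system $A^1\vec c=\vec{\hat h}_1$ and hence $c_j(s)=\Delta_j^1(s)/\Delta^1(s)$ by Cramer's rule (provided $\Delta^1(s)\ne 0$ on the contour, which is checkable from the explicit formula for $\Delta^1$ given in the text). I would then invert the Laplace transform along a vertical line $\Re s=\sigma>0$ and push the contour to the imaginary axis, using analyticity of the integrand in $\Re s>0$ and mild decay as $|\Im s|\to\infty$. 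Splitting the resulting integral at $s=0$ yields two conjugate halves, giving the decomposition $v_{j,m}^1=v_{j,m}^{+,1}+v_{j,m}^{-,1}$ with $v_{j,m}^{-,1}=\overline{v_{j,m}^{+,1}}$; the substitution $s=i\rho^3$ with $\rho\ge 0$ (whose Jacobian supplies the factor $3\rho^2$) then produces exactly (\ref{2.3-1a}) for $j=1,3$.

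The one non-cosmetic difficulty is the middle branch $\lambda_2^+(\rho)=\tfrac{\sqrt 3-i}{2}\rho$, whose real part is positive, so $e^{\lambda_2^+(\rho)x}$ grows like $e^{\sqrt 3\,\rho x/2}$ and the integrand in the $j=2$ contribution is not absolutely integrable prior to simplification. The remedy is to exploit the compensating $e^{\lambda_2^+(\rho)}$ factor that appears inside $\Delta_{2,m}^{+,1}(\rho)$ (visible from the explicit formulas for $\Delta_{2,m}^1$ after setting $s=i\rho^3$): writing $e^{\lambda_2^+(\rho)x}=e^{\lambda_2^+(\rho)}\,e^{-\lambda_2^+(\rho)(1-x)}$ and absorbing the prefactor $e^{\lambda_2^+(\rho)}/\Delta^{+,1}(\rho)$ into a redefined symbol $Q_{2,m}^{+,1}$ as in (\ref{2.5-1}) produces an amplitude that is uniformly controlled in $\rho$ and an oscillatory-integral representation $U_{2,m}^1h$ of the form (\ref{2.4-1}). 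One must additionally verify that $\Delta^{+,1}(\rho)\ne 0$ for $\rho>0$ and has controlled behavior at $\rho=0$, so that the inverse-transform integrals converge for $h\in H_0^s$; this is where I expect the real technical work to sit.

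With the formula in hand, the verification that $v=[W_{bdr}^1\vec h_1]$ solves (\ref{2.1-1}) is then direct: the equation $v_t+v_{xxx}=0$ holds termwise because $\lambda_j^+(\rho)^3=-i\rho^3$; the initial condition $v(x,0)=0$ follows from the $H_0^s$ vanishing of $\vec h_1$ at $t=0$, which supplies enough decay of $\hat h_{m,1}^+(\rho)$ to identify the integral as a genuine Laplace inverse evaluated at $t=0$; and the three boundary conditions $\mathcal B_{1,0}v=\vec h_1$ hold by construction since the $c_j(s)$ were chosen to satisfy them. The main obstacle is the re-expression of the $j=2$ integrand in a form uniformly tame in $\rho$; everything else is bookkeeping.
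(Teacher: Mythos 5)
Your proposal follows essentially the same route as the paper: Laplace transform in $t$, Cramer's rule for the coefficients $c_j(s)=\Delta_j^1(s)/\Delta^1(s)$, inversion along the imaginary axis split into conjugate halves, the substitution $s=i\rho^3$ producing the Jacobian factor $3\rho^2$, and the rewriting $e^{\lambda_2^+(\rho)x}=e^{\lambda_2^+(\rho)}e^{-\lambda_2^+(\rho)(1-x)}$ to tame the growing middle branch, exactly as encoded in the definitions (\ref{2.4-1})--(\ref{2.5-1}). The additional checks you flag (nonvanishing of $\Delta^{+,1}$, which the paper handles in the remark that $\Delta^j(s)\neq 0$ for $\operatorname{Re} s\geq 0$, and the termwise verification that the formula solves the IBVP) are sensible refinements of the same argument rather than a different approach.
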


Next we consider the IBVP (\ref{2.1-2}). A similarly arguments
shows the solution of the IBVP (\ref{2.1-2}) has the following
representation.
\begin{lem}
The solution $v$ of the IBVP (\ref{2.1-2}) can be written in the
form
\[
v(x,t)=[W_{bdr}^2\vec{h}_2](x,t):= \sum_{j,m=1}^3
[W_{j,m}^2h_{m,2}](x,t),
\]
where
\begin{equation}
 [W_{j,m}^2h](x,t) \equiv [U_{j,m}^2 h](x,t) +\overline{[U_{j,m}^2h](x,t)}\label{2.2-2}
\end{equation}
with
\begin{equation} [U_{j,m}^2 h](x,t)\equiv \frac{1}{2\pi } \int ^{+\infty }_{0} e^{i\rho ^3
t} e^{\lambda ^+_j (\rho ) x}   3\rho ^2
[Q_{j,m}^{+,2}h](\rho )d\rho \label{2.3-2}
\end{equation}
for $j=1,3, \ m=1,2,3 $ and
\begin{equation}
 [U_{2,m}^2 h](x,t) \equiv  \frac{1}{2\pi } \int ^{+\infty }_{0} e^{i \rho ^3
 t} e^{-\lambda ^+_2 (\rho ) (1-x)}    3\rho ^2
[Q_{2,m}^{+,2}h](\rho )d\rho \label{2.4-2}
\end{equation}
for $m=1,2,3$. Here
 \begin{equation}\label{2.5-2} [Q_{j,m}
^{+,2}h] (\rho ):=\frac{\Delta ^{+,2}_{j,m} (\rho ) } {\Delta
^{+,2} (\rho )} \hat{h}   ^+ (\rho ), \qquad [Q_{2,m}^{+,2}h]
(\rho )=\frac{\Delta ^{+,2}_{2,m} (\rho ) } {\Delta ^{+,2} (\rho
)} e^{\lambda ^+_2 (\rho )} \hat{h} ^+ (\rho )\end{equation} for
$j=1,3 $ and $m=1,2,3$. Here $\hat{h}^+ (\rho) = \hat{h} (i\rho
^3)$, $\Delta
    ^{+,2}(\rho)$ and $\Delta ^{+,2}_{j,m} (\rho)$ are obtained from
    $\Delta ^2
    (s)$ and $\Delta _{j,m}^2 (s)$ by replacing $s$ with $i \rho
    ^3$ and  $\lambda _j ^+ (\rho)=\lambda _j (i \rho
    ^3 )$ where
    \[ \Delta ^2 = \lambda _2 \lambda _3(\lambda _3-\lambda _2) e^{-\lambda _1}
    +\lambda _1 \lambda _3(\lambda _1-\lambda _3) e^{-\lambda _2}+\lambda _2 \lambda _1(\lambda _2-\lambda _1) e^{-\lambda
    _3};\]
    \[ \Delta ^2_{1,1} = e^{-\lambda _1}\lambda _2 \lambda _3(\lambda _3-\lambda
    _2), \  \Delta ^2_{2,1} = e^{-\lambda _2}\lambda _1 \lambda _3(\lambda
    _3-\lambda _1), \  \Delta ^2_{3,1} = e^{-\lambda _3}\lambda _1 \lambda _2(\lambda
    _2-\lambda _1);\]
    \[ \Delta ^2_{1,2} = \lambda ^2_2 e^{\lambda _2} -\lambda _3^2
    e^{\lambda _3}, \ \Delta ^2_{2,2} = \lambda ^2_3 e^{\lambda _3} -\lambda _1^2
    e^{\lambda _1}, \ \Delta ^2_{3,2} = \lambda_1 ^2 e^{\lambda _1} -\lambda_ 2^2
    e^{\lambda _2};\]
    \[ \Delta ^2_{1,3} = \lambda _3 e^{\lambda _3}-\lambda _2
    e^{\lambda _2}, \ \Delta ^2_{2,3} = \lambda _1 e^{\lambda _1}-\lambda
    _3
    e^{\lambda _3}, \ \Delta ^2_{3,3} = \lambda _2 e^{\lambda _2}-\lambda
    _1
    e^{\lambda _1}.\]
\end{lem}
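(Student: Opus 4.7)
The plan is to mirror the derivation of Lemma \ref{lem1a}, adjusted to the boundary operator $\mathcal{B}_{2,0}$. First I apply the Laplace transform in $t$ to (\ref{2.1-2}), which reduces the problem to the ODE boundary-value problem
\[
s\hat{v} + \hat{v}_{xxx} = 0, \qquad \hat{v}(0,s) = \hat{h}_{1,2}(s),\quad \hat{v}_x(1,s) = \hat{h}_{2,2}(s),\quad \hat{v}_{xx}(1,s) = \hat{h}_{3,2}(s).
\]
The general solution is $\hat{v}(x,s) = \sum_{j=1}^3 c_j(s) e^{\lambda_j(s) x}$, where $\lambda_j(s)$ are the three roots of $s + \lambda^3 = 0$. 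Imposing the boundary conditions yields the linear system $A^2 \vec{c} = \vec{\hat{h}}_2$, where $A^2$ has rows $(1,1,1)$, $(\lambda_1 e^{\lambda_1}, \lambda_2 e^{\lambda_2}, \lambda_3 e^{\lambda_3})$ and $(\lambda_1^2 e^{\lambda_1}, \lambda_2^2 e^{\lambda_2}, \lambda_3^2 e^{\lambda_3})$. Cramer's rule then gives $c_j(s) = \Delta^2_j(s)/\Delta^2(s)$, and hence, isolating the contribution from the $m$-th boundary datum, we obtain $c_j = \sum_{m=1}^3 (\Delta^2_{j,m}(s)/\Delta^2(s))\hat{h}_{m,2}(s)$.

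Next I simplify the determinants. Since the characteristic equation $s + \lambda^3 = 0$ contains no $\lambda^2$ term, its roots satisfy $\lambda_1 + \lambda_2 + \lambda_3 = 0$, so $\lambda_i + \lambda_j = -\lambda_k$ whenever $\{i,j,k\} = \{1,2,3\}$. Expanding $\det A^2$ along the first row produces a sum of three terms of the shape $\lambda_i \lambda_j (\lambda_j - \lambda_i) e^{\lambda_i + \lambda_j}$, which collapses under this identity to the stated expression for $\Delta^2$. Replacing column $j$ of $A^2$ by the $m$-th standard basis vector and carrying out analogous expansions produces the nine explicit formulas for $\Delta^2_{j,m}$ listed in the lemma.

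With $c_j$ in hand, I invert the Laplace transform, split the Bromwich contour into its upper and lower imaginary half-axes to write $v = v^{+} + v^{-}$, and change variables via $s = i\rho^3$, $\rho \in [0,\infty)$, producing the Jacobian $3\rho^2\,d\rho$ and picking out the branches $\lambda_j^+(\rho)$ already recorded in the paper. For $j = 1, 3$ one has $\mathrm{Re}\,\lambda_j^+(\rho) \le 0$, so $e^{\lambda_j^+(\rho) x}$ is bounded on $[0,1]$ and the formulas (\ref{2.3-2})--(\ref{2.5-2}) for $j=1,3$ follow immediately. For $j = 2$, however, $\mathrm{Re}\,\lambda_2^+(\rho) = \tfrac{\sqrt{3}}{2}\rho \to +\infty$, so $e^{\lambda_2^+(\rho) x}$ grows in $\rho$; to cure this I apply the same rewriting as in Lemma \ref{lem1a}, namely
\[
e^{\lambda_2^+(\rho) x} = e^{\lambda_2^+(\rho)}\, e^{-\lambda_2^+(\rho)(1-x)},
\]
and absorb the factor $e^{\lambda_2^+(\rho)}$ into the symbol, yielding $[Q_{2,m}^{+,2}h](\rho) = (\Delta^{+,2}_{2,m}(\rho)/\Delta^{+,2}(\rho))\, e^{\lambda_2^+(\rho)}\hat{h}^+(\rho)$, which is the second formula in (\ref{2.5-2}), together with the bounded kernel $e^{-\lambda_2^+(\rho)(1-x)}$ appearing in (\ref{2.4-2}). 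Finally, $v_{j,m}^{-,2} = \overline{v_{j,m}^{+,2}}$ by conjugation symmetry across the real axis, which gives (\ref{2.2-2}) and completes the representation.

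The main obstacle is the second step: carefully verifying the compact closed-form expressions for $\Delta^2$ and for all nine cofactor determinants $\Delta^2_{j,m}$ precisely as listed. This is routine but requires systematic use of $\lambda_1 + \lambda_2 + \lambda_3 = 0$ to trade $e^{\lambda_i + \lambda_j}$ factors for $e^{-\lambda_k}$. The analytic content, the rewriting of the growing exponential for $j = 2$ into a bounded one, is then inherited directly from the method of Lemma \ref{lem1a}.
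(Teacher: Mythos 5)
Your proposal is correct and follows exactly the route the paper intends: the paper gives no separate argument for this lemma beyond the remark that ``a similar argument'' to Lemma \ref{lem1a} applies, and you have supplied precisely that argument (Laplace transform, Cramer's rule with the matrix encoding $\hat v(0,s)$, $\hat v_x(1,s)$, $\hat v_{xx}(1,s)$, the identity $\lambda_1+\lambda_2+\lambda_3=0$ to reduce the determinants, the substitution $s=i\rho^3$, and the factoring $e^{\lambda_2^+ x}=e^{\lambda_2^+}e^{-\lambda_2^+(1-x)}$ to tame the growing mode). As a very minor point, a careful expansion gives $\Delta^2_{2,1}=e^{-\lambda_2}\lambda_1\lambda_3(\lambda_1-\lambda_3)$, matching the sign pattern of $\Delta^1_{2,1}$ rather than the sign printed in the lemma, so the paper's listed cofactor appears to carry a typo that your systematic computation would catch.
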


\medskip
For solutions of (\ref{2.1-3}), we have the following lemma.
\begin{lem}\label{l3-c}
The solution $v$ of the IBVP (\ref{2.1-3}) can be written in the
form
\[
v(x,t)=[W_{bdr}^3\vec{h}_3](x,t):= \sum_{j,m=1}^3
[W_{j,m}^3h_{m,3}](x,t),
\]
where
\begin{equation}
 [W_{j,m}^3h](x,t) \equiv [U_{j,m}^3 h](x,t) +\overline{[U_{j,m}^3h](x,t)}\label{2.2-3}
\end{equation}
with
\begin{equation} [U_{j,m}^3 h](x,t)\equiv \frac{1}{2\pi } \int ^{+\infty }_{0} e^{i\rho ^3
t} e^{\lambda ^+_j (\rho ) x}   3\rho ^2
[Q_{j,m}^{+,3}h](\rho )d\rho \label{2.3-3}
\end{equation}
for $j=1,3, \ m=1,2,3 $ and
\begin{equation}
 [U_{2,m}^3 h](x,t) \equiv  \frac{1}{2\pi } \int ^{+\infty }_{0} e^{i \rho ^3
 t} e^{-\lambda ^+_2 (\rho )(1- x)}    3\rho ^2
[Q_{2,m}^{+,3}h](\rho )d\rho \label{2.4-3}
\end{equation}
for $m=1,2,3$.
Here
\begin{equation}\label{2.5-3} [Q_{j,m}
^{+,3}h] (\rho ):=\frac{\Delta ^{+,3}_{j,m} (\rho ) } {\Delta
^{+,3} (\rho )} \hat{h}   ^+ (\rho ), \qquad [Q_{2,m}^{+,3}h]
(\rho )=\frac{\Delta ^{+,3}_{2,m} (\rho ) } {\Delta ^{+,3} (\rho
)} e^{\lambda ^+_2 (\rho )} \hat{h} ^+ (\rho )\end{equation} for
$j=1,3 $ and $m=1,2,3$. Here $\hat{h}^+ (\rho) = \hat{h} (i\rho
^3)$, $\Delta
    ^{+,1}(\rho)$ and $\Delta ^{+,3}_{j,m} (\rho)$ are obtained from
    $\Delta ^3
    (s)$ and $\Delta _{j,m}^3 (s)$ by replacing $s$ with $i \rho
    ^3$ and  $\lambda _j ^+ (\rho)=\lambda _j (i \rho
    ^3 )$ where
    \[ \Delta ^3 = \lambda _1^2(\lambda _3-\lambda _2) e^{-\lambda _1}
    +\lambda _2^2(\lambda _1-\lambda _3) e^{-\lambda _2}+\lambda _3^2(\lambda _2-\lambda _1) e^{-\lambda
    _3};\]
    \[ \Delta ^3_{1,1} = e^{-\lambda _1}(\lambda _3-\lambda
    _2), \  \Delta ^3_{2,1} = e^{-\lambda _2}(\lambda
    _1-\lambda _3), \  \Delta ^3_{3,1} = e^{-\lambda _3}(\lambda
    _2-\lambda _1);\]
    \[ \Delta ^3_{1,2} = \lambda ^2\lambda _3(\lambda _3 e^{\lambda _2} -\lambda
    _2
    e^{\lambda _3}), \ \Delta ^3_{2,2} = \lambda _1\lambda _3 (\lambda _1e^{\lambda _3} -\lambda
    _3
    e^{\lambda _1}), \ \Delta ^3_{3,2} = \lambda_1 \lambda _2( \lambda _2e^{\lambda _1}
    -\lambda _1
    e^{\lambda _2});\]
    \[ \Delta ^3_{1,3} = \lambda _2^2 e^{\lambda _3}-\lambda _3^2
    e^{\lambda _2}, \ \Delta ^3_{2,3} = \lambda _3^2 e^{\lambda _1}-\lambda
    _1^2
    e^{\lambda _3}, \ \Delta ^3_{3,3} = \lambda _1^2e^{\lambda _2}-\lambda
    _2^2
    e^{\lambda _1}.\]
\end{lem}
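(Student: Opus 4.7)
The plan is to follow exactly the derivation scheme used for Lemma \ref{lem1a}. First I would apply the Laplace transform in $t$ to (\ref{2.1-3}), which converts the IBVP to the parameterized family of ODE boundary-value problems
\begin{equation*}
s\hat v + \hat v_{xxx}=0,\qquad \hat v_{xx}(0,s)=\hat h_{1,3}(s),\quad \hat v(1,s)=\hat h_{2,3}(s),\quad \hat v_x(1,s)=\hat h_{3,3}(s).
\end{equation*}
Writing the general solution as $\hat v(x,s)=\sum_{j=1}^{3}c_j(s)e^{\lambda_j(s)x}$, where $\lambda_1,\lambda_2,\lambda_3$ are the three roots of $s+\lambda^3=0$, the boundary conditions produce the $3\times 3$ linear system $A^3 \vec c=\vec{\widehat h}_3$ with
\begin{equation*}
A^3=\begin{pmatrix}\lambda_1^2 & \lambda_2^2 & \lambda_3^2\\ e^{\lambda_1}&e^{\lambda_2}&e^{\lambda_3}\\ \lambda_1e^{\lambda_1}&\lambda_2e^{\lambda_2}&\lambda_3e^{\lambda_3}\end{pmatrix}.
\end{equation*}

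Next I would compute $\Delta^3:=\det A^3$ and the nine cofactor-type determinants $\Delta_{j,m}^3$ (obtained from $A^3$ by replacing the $j$-th column by the unit vector $\vec e_m$), by cofactor expansion along the first row and simplification using the identity $\lambda_1+\lambda_2+\lambda_3=0$, which comes from the absence of a $\lambda^2$ term in $s+\lambda^3$. This identity turns each product $e^{\lambda_i+\lambda_j}$ arising from the expansion into $e^{-\lambda_k}$ and yields precisely the formulas for $\Delta^3$ and $\Delta^3_{j,m}$ listed in the statement. Cramer's rule then gives $c_j(s)=\Delta_j^3(s)/\Delta^3(s)$ with $\Delta_j^3=\sum_{m=1}^3 \Delta_{j,m}^3\,\hat h_{m,3}$.

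Inverting the Laplace transform along the imaginary $s$-axis and splitting the contour at the origin produces, exactly as after (\ref{2.2-1}), the decomposition $v=\sum_{j,m}(v_{j,m}^{+,3}+v_{j,m}^{-,3})$ with $v_{j,m}^{-,3}=\overline{v_{j,m}^{+,3}}$. Performing the change of variables $s=i\rho^3$, $0\le \rho<\infty$, in the integrals defining $v_{j,m}^{+,3}$ produces the $3\rho^2\,d\rho$ Jacobian and recasts the integrals in the form (\ref{2.3-3})--(\ref{2.4-3}), with $\lambda_j^+(\rho)=\lambda_j(i\rho^3)$, $\hat h^+(\rho)=\hat h(i\rho^3)$, and $\Delta^{+,3}$, $\Delta^{+,3}_{j,m}$ defined as in the statement.

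The one point that requires more than bookkeeping is the handling of the $j=2$ branch. Since $\operatorname{Re}\lambda_2^+(\rho)=\tfrac{\sqrt3}{2}\rho>0$, the kernel $e^{\lambda_2^+(\rho)x}$ grows without bound in $\rho$ for any $x>0$, which would ruin the linear estimates to be proved in subsection \ref{sec22}. To avoid this, I would extract a factor $e^{\lambda_2^+(\rho)}$ from the amplitude, folding it into the definition of $Q_{2,m}^{+,3}$ as in (\ref{2.5-3}), and rewrite $e^{\lambda_2^+(\rho)x}=e^{\lambda_2^+(\rho)}\cdot e^{-\lambda_2^+(\rho)(1-x)}$; this produces the exponentially decaying kernel $e^{-\lambda_2^+(\rho)(1-x)}$ appearing in (\ref{2.4-3}). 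This conjugation by $e^{\lambda_2^+(\rho)}$ is the key manipulation and the only real obstacle; once it is in place the combination $W_{j,m}^3 h=U_{j,m}^3 h+\overline{U_{j,m}^3 h}$ produces the stated representation $v=W_{bdr}^3\vec h_3=\sum_{j,m}W_{j,m}^3 h_{m,3}$.
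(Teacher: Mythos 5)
Your proposal is correct and follows essentially the same route the paper takes: it mirrors, step for step, the Laplace-transform/Cramer's-rule derivation carried out in detail for (\ref{2.1-1}), with the only change being the first row $(\lambda_1^2,\lambda_2^2,\lambda_3^2)$ of the coefficient matrix coming from the boundary condition $\hat v_{xx}(0,s)=\hat h_{1,3}(s)$, and your determinant computations (using $\lambda_1+\lambda_2+\lambda_3=0$) reproduce the stated $\Delta^3$ and $\Delta^3_{j,m}$ (indeed they expose the paper's typo $\lambda^2$ for $\lambda_2$ in $\Delta^3_{1,2}$). You also correctly identify the extraction of the factor $e^{\lambda_2^+(\rho)}$ in the $j=2$ branch as the one non-mechanical step, exactly as in (\ref{2.4-1})--(\ref{2.5-1}).
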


\medskip
For  solutions of (\ref{2.1-4}), we have
\begin{lem}\label{l4-d}
The solution $v$ of the IBVP (\ref{2.1-4}) can be written in the
form
\[
v(x,t)=[W_{bdr}^4\vec{h}_4](x,t):= \sum_{j,m=1}^3
[W_{j,m}^4h_{m,4}](x,t),
\]
where
\begin{equation}
 [W_{j,m}^4h](x,t) \equiv [U_{j,m}^4 h](x,t) +\overline{[U_{j,m}^4h](x,t)}\label{2.2-4}
\end{equation}
with
\begin{equation} [U_{j,m}^4 h](x,t)\equiv \frac{1}{2\pi } \int ^{+\infty }_{0} e^{i\rho ^3
t} e^{\lambda ^+_j (\rho ) x}   3\rho ^2
[Q_{j,m}^{+,4}h](\rho )d\rho \label{2.3-4}
\end{equation}
for $j=1,3, \ m=1,2,3 $ and
\begin{equation}
 [U_{2,m}^4 h](x,t) \equiv  \frac{1}{2\pi } \int ^{+\infty }_{0} e^{i \rho ^3
 t} e^{-\lambda ^+_2 (\rho ) (1-x)}    3\rho ^2
[Q_{2,m}^{+,4}h](\rho )d\rho \label{2.4-4}
\end{equation}
for $m=1,2,3$.
Here
\begin{equation}\label{2.5-4} [Q_{j,m}
^{+,4}h] (\rho ):=\frac{\Delta ^{+,4}_{j,m} (\rho ) } {\Delta
^{+,4} (\rho )} \hat{h}   ^+ (\rho ), \qquad [Q_{2,m}^{+,4}h]
(\rho )=\frac{\Delta ^{+,4}_{2,m} (\rho ) } {\Delta ^{+,4} (\rho
)} e^{\lambda ^+_2 (\rho )} \hat{h} ^+ (\rho )\end{equation} for
$j=1,3 $ and $m=1,2,3$. Here $\hat{h}^+ (\rho) = \hat{h} (i\rho
^3)$, $\Delta
    ^{+,4}(\rho)$ and $\Delta ^{+,4}_{j,m} (\rho)$ are obtained from
    $\Delta ^4
    (s)$ and $\Delta _{j,m}^4 (s)$ by replacing $s$ with $i \rho
    ^3  $ and  $\lambda _j ^+ (\rho)=\lambda _j (i \rho
    ^3 )$ where
    \[ \Delta ^4 = \lambda _1\lambda _2 \lambda _3\left (\lambda _1(\lambda _3-\lambda _2) e^{-\lambda _1}
    +\lambda _2(\lambda _1-\lambda _3) e^{-\lambda _2}+\lambda _3(\lambda _2-\lambda _1) e^{-\lambda
    _3}\right );\]
    \[ \Delta ^4_{1,1} = e^{-\lambda _1}\lambda _2 \lambda _3(\lambda _3-\lambda
    _2), \  \Delta ^4_{2,1} = e^{-\lambda _2}\lambda _1 \lambda _3(\lambda
    _1-\lambda _3), \  \Delta ^4_{3,1} = e^{-\lambda _3}\lambda _1 \lambda _2(\lambda
    _2-\lambda _1);\]
    \[ \Delta ^4_{1,2} = \lambda ^2_2 \lambda _3^2 (e^{\lambda _2} -
    e^{\lambda _3}), \ \Delta ^4_{2,2} = \lambda _1^2\lambda ^2_3( e^{\lambda _3} -
    e^{\lambda _1}), \ \Delta ^4_{3,2} = \lambda_1 ^2\lambda _2^2 ( e^{\lambda _1} -
    e^{\lambda _2});\]
    \[ \Delta ^4_{1,3} = \lambda _2\lambda _3 (\lambda _2e^{\lambda _3}-\lambda
    _3
    e^{\lambda _2}), \ \Delta ^4_{2,3} = \lambda _1 \lambda _3 (\lambda_3e^{\lambda _1}-\lambda
    _1
    e^{\lambda _3}), \ \Delta ^4_{3,3} = \lambda _1 \lambda _2 (\lambda _1e^{\lambda _2}-\lambda
    _2
    e^{\lambda _1}),\]
    and  $\lambda _j (s), j=1,2,3$ are solutions of the characteristic
equation
\[ s +1  + \lam^3=0.\]
\end{lem}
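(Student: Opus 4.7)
The plan is to follow the Laplace-transform/Cramer's rule template already used to prove Lemma \ref{lem1a}, modifying the two places where (\ref{2.1-4}) differs from (\ref{2.1-1}): the lower-order term $+v$ alters the characteristic equation, and the boundary operator $\mathcal{B}_{4,0}v=(v_{xx}(0,t),v_x(L,t),v_{xx}(L,t))$ changes the linear system determining the coefficients.

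First I would apply the Laplace transform in $t$ to reduce (\ref{2.1-4}) to
\[
\hat{v}_{xxx}+(s+1)\hat{v}=0,\qquad \hat{v}_{xx}(0,s)=\hat{h}_{1,4}(s),\ \hat{v}_x(1,s)=\hat{h}_{2,4}(s),\ \hat{v}_{xx}(1,s)=\hat{h}_{3,4}(s),
\]
write $\hat{v}(x,s)=\sum_{j=1}^3 c_j(s)e^{\lambda_j(s)x}$ with $\lambda_j(s)$ solving $s+1+\lambda^3=0$, and impose the boundary conditions to obtain the linear system $A^4\vec{c}=\vec{\hat{h}}_4$ with
\[
A^4=\begin{pmatrix}\lambda_1^2&\lambda_2^2&\lambda_3^2\\ \lambda_1 e^{\lambda_1}&\lambda_2 e^{\lambda_2}&\lambda_3 e^{\lambda_3}\\ \lambda_1^2 e^{\lambda_1}&\lambda_2^2 e^{\lambda_2}&\lambda_3^2 e^{\lambda_3}\end{pmatrix}.
\]
Cramer's rule then gives $c_j(s)=\sum_{m=1}^3 \Delta^4_{j,m}(s)\hat{h}_{m,4}(s)/\Delta^4(s)$, where $\Delta^4_{j,m}(s)$ is the determinant obtained from $A^4$ by replacing column $j$ with the $m$-th standard basis vector.

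Next I would compute the determinants explicitly. Expanding $\Delta^4$ along the first row and factoring $\lambda_1\lambda_2\lambda_3$ out of the $2\times 2$ minors, one gets a sum of three terms of the form $\lambda_k(\lambda_j-\lambda_i)e^{\lambda_i+\lambda_j}$. The crucial simplification is that the characteristic polynomial $s+1+\lambda^3$ has no $\lambda^2$ term, so Vieta gives $\lambda_1+\lambda_2+\lambda_3=0$, hence $e^{\lambda_i+\lambda_j}=e^{-\lambda_k}$ whenever $\{i,j,k\}=\{1,2,3\}$. This immediately reduces $\Delta^4$ to the stated formula, and the same expansion, followed by the same substitution, yields each $\Delta^4_{j,m}$ in the stated form (with the $\lambda_1\lambda_2\lambda_3$ prefactor cancelling against the $\lambda_m^{2-\delta}$ column entries in the appropriate way).

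Finally I would invert the Laplace transform. Deform the Bromwich contour to the imaginary axis $s=iy$ and split into $y>0$ and $y<0$; on the positive ray parametrize $y=\rho^3$ with Jacobian $dy=3\rho^2 d\rho$, producing the $3\rho^2$ factor and the half-line integral (\ref{2.3-4}); the $y<0$ piece is the complex conjugate of the $y>0$ piece, giving the $v^{-,4}_{j,m}=\overline{v^{+,4}_{j,m}}$ structure in (\ref{2.2-4}). For $j=1,3$ this yields (\ref{2.3-4}) directly; for $j=2$, whose root $\lambda_2^+(\rho)$ has positive real part (so $e^{\lambda_2^+(\rho)x}$ grows in $x$), I would rewrite $e^{\lambda_2^+(\rho)x}=e^{\lambda_2^+(\rho)}e^{-\lambda_2^+(\rho)(1-x)}$ and absorb the $e^{\lambda_2^+(\rho)}$ factor into the definition of $Q^{+,4}_{2,m}$ in (\ref{2.5-4}), producing the bounded form (\ref{2.4-4}).

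The main obstacle is not the algebraic derivation, which is a routine parallel of the calculation behind Lemma \ref{lem1a}, but rather justifying the contour deformation and the convergence of the resulting oscillatory integrals. Concretely, one must verify that $\Delta^4(s)\neq 0$ on the deformed contour and that the matrix inversion $(A^4)^{-1}$ grows only polynomially as $|s|\to\infty$ along the imaginary axis, so that for $\vec{h}_4$ of finite Sobolev regularity the inverse-transform integrals converge and the manipulations are legitimate. These points are addressed by exploiting the explicit form of $\Delta^4$ together with the asymptotics $\lambda_j^+(\rho)\sim c_j\rho$ (up to a bounded perturbation coming from the $+1$ in $s+1+\lambda^3=0$), and will be used quantitatively in the sharp Kato smoothing estimates of subsection \ref{sec22}.
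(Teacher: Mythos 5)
Your proposal is correct and follows essentially the same route as the paper, which derives the representation for the case $k=1$ via Laplace transform, Cramer's rule, and the substitution $s=i\rho^3$, and then invokes "similar arguments" for $k=2,3,4$; your matrix $A^4$, the use of $\lambda_1+\lambda_2+\lambda_3=0$ to rewrite $e^{\lambda_i+\lambda_j}=e^{-\lambda_k}$, and the absorption of $e^{\lambda_2^+(\rho)}$ into $Q^{+,4}_{2,m}$ all reproduce the paper's computation, and the stated determinants check out. The non-vanishing of $\Delta^4(s)$ for $\mathrm{Re}\,s\geq 0$ that you flag is exactly what the paper records in the remark following this lemma.
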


\begin{rem}
From  $s\hat v + \hat v_{xxx}= 0$ with boundary conditions ${\cal B}_j \hat v = 0 $ for $j = 1,2,3$ or  $s\hat v + \hat v + \hat v_{xxx}= 0$ with boundary conditions ${\cal B}_4 \hat v = 0 $, it can be easily shown that there are no nontrivial solutions $\hat v$ for any $s$ with $\emph{Re\,} s \geq 0$. Therefore, $\Delta ^j (s) \not = 0 , j = 1, 2, 3, 4$ for any $s$ with $\emph{Re\,} s \geq 0$.
\end{rem}


The following lemma is helpful in deriving various linear estimates for solutions of the  IBVP (\ref{y-1})  in  the  next subsection.

\begin{lem}\label{l5-abcd}
For $m=1,2,3$ , $k=1,2,3,4$ and $j=1,3$, set
$$\h{h}_{j,m,k}^{*}(\rho):= 3\rho ^2 [Q_{j,m}^{+,k}h_{m,k}](\rho) = 3\rho^2\frac{\Delta_{j,m}^{+,k}(\rho)}{\Delta^{+,k}(\rho)}\h{h}_{m,k}^+(\rho)
$$
 and
 \[   \h{h}_{2,m,k}^{*}(\rho):=3\rho ^2 [Q_{2,m}^{+,k}h_{m,k}](\rho) =3\rho ^2  \frac{\Delta_{2,m}^{+,k}(\rho)}{\Delta
^{+,k}(\rho)}e^{\lam_2^+(\rho)}\h{h}^+_{m,k}(\rho)\] and view
$h^*_{j,m,k}$ as the inverse Fourier transform of
$\h{h}_{j,m,k}^*$. Then for any $s\in \mathbb{R}$,
\begin{equation}\label{2.6-1} \left \{
\begin{array}{lll} h_{1,1}\in H_0^{(s+1)/3}(\mathbb{R}^+) &\Rightarrow h_{j,1,1}^* \in
H^s (\mathbb{R}), & \ j=1,2,3,\\ \\
  h_{2,1}\in H_0^{(s+1)/3}(\mathbb{R}^+) &\Rightarrow h^*_{j,2,1}\in
H^s (\mathbb{R}),& \ j=1,2,3,\\ \\
 h_{3,1}\in H_0^{s/3}(\mathbb{R}^+)&\Rightarrow h_{j,3,1}^* \in H^s
 (\mathbb{R}), & \ j=1,2,3.
 \end{array} \right .
 \end{equation}
\begin{equation}\label{2.6-2} \left \{
\begin{array}{lll} h_{1,2}\in H_0^{(s+1)/3}(\mathbb{R}^+) &\Rightarrow h_{j,1,2}^* \in
H^s (\mathbb{R}), & \ j=1,2,3,\\ \\
  h_{2,2}\in H_0^{s/3}(\mathbb{R}^+) &\Rightarrow h^*_{j,2,2}\in
H^s (\mathbb{R}),& \ j=1,2,3,\\ \\
 h_{3,2}\in H_0^{(s-1)/3}(\mathbb{R}^+)&\Rightarrow h_{j,3,2}^* \in H^s
 (\mathbb{R}), & \ j=1,2,3.
 \end{array} \right .
 \end{equation}

 \begin{equation}\label{2.6-3} \left \{
\begin{array}{lll} h_{1,3}\in H_0^{(s-1)/3}(\mathbb{R}^+) &\Rightarrow h_{j,1,3}^* \in
H^s (\mathbb{R}), & \ j=1,2,3,\\ \\
  h_{2,3}\in H_0^{(s+1)/3}(\mathbb{R}^+) &\Rightarrow h^*_{j,2,3}\in
H^s (\mathbb{R}),& \ j=1,2,3,\\ \\
 h_{3,3}\in H_0^{s/3}(\mathbb{R}^+)&\Rightarrow h_{j,3,3}^* \in H^s
 (\mathbb{R}), & \ j=1,2,3.
 \end{array} \right .
 \end{equation}

 \begin{equation}\label{2.6-4} \left \{
\begin{array}{lll} h_{1,4}\in H_0^{(s-1)/3}(\mathbb{R}^+) &\Rightarrow h_{j,1,4}^* \in
H^s (\mathbb{R}), & \ j=1,2,3,\\ \\
  h_{2,4}\in H_0^{s/3}(\mathbb{R}^+) &\Rightarrow h^*_{j,2,4}\in
H^s (\mathbb{R}),& \ j=1,2,3,\\ \\
 h_{3,4}\in H_0^{(s-1)/3}(\mathbb{R}^+)&\Rightarrow h_{j,3,4}^* \in H^s
 (\mathbb{R}), & \ j=1,2,3.
 \end{array} \right .
 \end{equation}

\end{lem}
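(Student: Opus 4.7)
The plan is to reduce each implication in (\ref{2.6-1})--(\ref{2.6-4}) to a one-variable Fourier multiplier estimate in the frequency $\rho$, and then to verify that estimate by a careful asymptotic analysis of the determinants $\Delta^{+,k}(\rho)$ and $\Delta^{+,k}_{j,m}(\rho)$.

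First I would translate the claim into Fourier-analytic form. By Plancherel, $h^*_{j,m,k}\in H^s(\mathbb{R})$ is equivalent to
\[
\int_{\mathbb{R}}(1+\rho^2)^s|M^k_{j,m}(\rho)|^2|\widehat h^+_{m,k}(\rho)|^2\,d\rho<\infty,
\]
where $M^k_{j,m}(\rho)=3\rho^2\,\Delta^{+,k}_{j,m}(\rho)/\Delta^{+,k}(\rho)$ for $j=1,3$ and carries an additional factor $e^{\lambda_2^+(\rho)}$ when $j=2$. Since $h_{m,k}\in H^\alpha_0(0,T]$ for the exponent $\alpha$ prescribed by the hypothesis, its zero extension lies in $H^\alpha(\mathbb{R})$. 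The change of variables $\tau=\pm\rho^3$ identifies the integral above with one of the form
\[
\int_0^{\infty}\frac{(1+\tau^{2/3})^s}{3\tau^{2/3}}\bigl(|M^k_{j,m}(\tau^{1/3})|^2|\widetilde h_{m,k}(\tau)|^2+|M^k_{j,m}(-\tau^{1/3})|^2|\widetilde h_{m,k}(-\tau)|^2\bigr)\,d\tau,
\]
and comparison with $\|h_{m,k}\|^2_{H^\alpha(\mathbb{R})}$ reduces the problem to the pointwise multiplier bound $|M^k_{j,m}(\rho)|\lesssim\langle\rho\rangle^{3\alpha-s+1}$. In concrete terms, the target polynomial growth rate is $\langle\rho\rangle^2$ when $\alpha=(s+1)/3$, $\langle\rho\rangle$ when $\alpha=s/3$, and $\langle\rho\rangle^0$ when $\alpha=(s-1)/3$.

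Next I would perform the asymptotic analysis of the multipliers as $|\rho|\to\infty$. For $\rho>0$ the three roots of $s+\lambda^3=0$ at $s=i\rho^3$ are $\lambda_1^+=i\rho$, $\lambda_2^+=(\sqrt 3-i)\rho/2$, and $\lambda_3^+=-(\sqrt 3+i)\rho/2$; for $k=4$ one uses the roots of $s+1+\lambda^3=0$, which differ from these by a bounded perturbation and leave the leading-order analysis unchanged. Since $|e^{-\lambda_1^+}|=1$, $|e^{-\lambda_2^+}|=e^{-\sqrt 3\rho/2}$, and $|e^{-\lambda_3^+}|=e^{\sqrt 3\rho/2}$, the dominant term in each $\Delta^{+,k}$ given in Lemmas \ref{lem1a}--\ref{l4-d} is always the one containing $e^{-\lambda_3}$, yielding $|\Delta^{+,k}(\rho)|\sim c_k\rho^{q_k}e^{\sqrt 3\rho/2}$ with $q_k\in\{1,3,3,4\}$. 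Identifying the dominant exponential in each cofactor $\Delta^{+,k}_{j,m}$ and dividing, the exponential in the denominator cancels that in the numerator and leaves a purely polynomial bound for $M^k_{j,m}$ whose degree matches the Sobolev index of the column of ${\cal H}^s_k$ to which $h_{m,k}$ belongs. The case $j=2$ needs separate treatment: the extra factor $e^{\lambda_2^+}$, of size $e^{\sqrt 3\rho/2}$, multiplies cofactors whose dominant exponential is $e^{-\lambda_2}$ or $e^{\lambda_1}$ rather than $e^{-\lambda_3}$, so the ratio itself carries the cancelling factor $e^{-\sqrt 3\rho/2}$, and only polynomial growth is produced once again.

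For $\rho$ in a bounded set, including a neighborhood of the origin, the remark after Lemma \ref{l4-d} guarantees $\Delta^{+,k}(\rho)=\Delta^k(i\rho^3)\neq 0$, so by continuity $|\Delta^{+,k}|$ is bounded below on compact sets and each $M^k_{j,m}$ is bounded there; the low-frequency contribution is therefore controlled by $\|\widetilde h_{m,k}\|_{L^2}$. The main obstacle is the systematic case analysis: for each of the four boundary operators $\mathcal{B}_{k,0}$ there are nine multipliers $M^k_{j,m}$ to examine, and for each one must identify the correct dominant exponential in the cofactor, cancel it against the one in the denominator, and read off the polynomial degree that dictates the temporal Sobolev index. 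Organizing these thirty-six cases by the columns of ${\cal H}^s_k(0,T)$ and by the two regimes $j\in\{1,3\}$ versus $j=2$ is the main bookkeeping task.
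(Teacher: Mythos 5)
Your proposal is correct and follows essentially the same route as the paper: the paper's proof consists precisely of listing the roots $\lambda_j^+(\rho)$, tabulating the large-$\rho$ asymptotics of the ratios $\Delta^{+,k}_{j,m}(\rho)/\Delta^{+,k}(\rho)$ for all thirty-six cases (with the extra factor $e^{\lambda_2^+(\rho)}$ absorbed in the $j=2$ entries), and reading off the temporal Sobolev exponents through the substitution $\tau=\rho^3$, exactly as you describe. The one inaccuracy is your low-frequency justification: for $k=1,2,3$ the determinant $\Delta^{+,k}(\rho)$ in fact vanishes at $\rho=0$ because the three roots coalesce there and the exponential basis degenerates (the nonvanishing asserted in the remark reflects the solvability of the boundary-value problem, not of this particular Vandermonde-type representation), so boundedness of the multipliers near the origin comes instead from the compensating vanishing of the cofactors $\Delta^{+,k}_{j,m}$ together with the explicit $3\rho^2$ prefactor --- a point the paper's own proof also leaves implicit.
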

\noindent {\bf Proof:} Recall that for $k=1,2,3$, we have
\[ \lambda _1^+ (\rho) =i\rho, \quad \lambda ^+_2
(\rho)=\frac{\sqrt{3}-i}{2} \rho, \quad \lambda ^+_3 (\rho) =
-\frac{\sqrt{3}+i}{2} \rho \] for $\rho\geq 0$,
and for $k=4$,
\[ \lambda _1^+ (\rho) \sim i\rho, \quad \lambda ^+_2
(\rho)\sim \frac{\sqrt{3}-i}{2} \rho, \quad \lambda ^+_3 (\rho) \sim
-\frac{\sqrt{3}+i}{2} \rho \] as $\rho\to +\infty$. Thus, the following asymptotic estimates of
$\frac{\Delta^{+,k}_{n,m}(\rho)}{\Delta ^{+,k}(\rho)}$, for
$m,n=1,2,3, \ k=1,2,3,4$, as $\rho \to +\infty$, hold:
\begin{center}
\renewcommand{\arraystretch}{2}
\begin{tabular}{||>{$}c<{$}| >{$}c<{$}|>{$}c<{$}||}\hline
\frac{\Delta_{1,1}^{+,1}(\rho)}{\Delta ^{+,1}(\rho)}  \sim
e^{-\frac{\sqrt{3}}{2}\rho} &
\frac{\Delta_{2,1}^{+,1}(\rho)}{\Delta ^{+,1}(\rho)}
 \sim  e^{- \sqrt{3} \rho} & \frac{\Delta_{3,1}^{+,1}(\rho)}{\Delta ^{+,1}(\rho)}  \sim 1 \\
\hline \frac{\Delta_{1,2}^{+,1}(\rho)}{\Delta ^{+,1}(\rho)} \sim 1
& \frac{\Delta_{2,2}^{+,1}(\rho)}{\Delta ^{+,1}(\rho)} \sim
 e^{-\frac{\sqrt{3}}{2}\rho} & \frac{\Delta_{3,2}^{+,1}(\rho)}{\Delta ^{+,1}(\rho)}  \sim  1 \\
 \hline
\frac{\Delta_{1,3}^{+,1}(\rho)}{\Delta ^{+,1}(\rho)}  \sim
\rho^{-1} &\frac{\Delta_{2,3}^{+,1}(\rho)}{\Delta ^{+,1}(\rho)}
\sim
 \rho^{-1}e^{-\frac{\sqrt{3}}{2}\rho}  & \frac{\Delta_{3,3}^{+,1}(\rho)}{\Delta ^{+,1}(\rho)}  \sim  \rho^{-1}
   \\ \hline
\end{tabular}
\end{center}

\begin{center}
\renewcommand{\arraystretch}{2}
\begin{tabular}{||>{$}c<{$}| >{$}c<{$}|>{$}c<{$}||}\hline
\frac{\Delta_{1,1}^{+,2}(\rho)}{\Delta ^{+,2}(\rho)}  \sim
e^{-\frac{\sqrt{3}}{2}\rho} &
\frac{\Delta_{2,1}^{+,2}(\rho)}{\Delta ^{+,2}(\rho)}
 \sim  \rho ^{-3}e^{- \sqrt{3} \rho} & \frac{\Delta_{3,1}^{+,2}(\rho)}{\Delta ^{+,2}(\rho)}  \sim 1 \\
\hline \frac{\Delta_{1,2}^{+,2}(\rho)}{\Delta ^{+,2}(\rho)} \sim
\rho ^{-1} & \frac{\Delta_{2,2}^{+,2}(\rho)}{\Delta ^{+,2}(\rho)}
\sim
 \rho ^{-1}e^{-\frac{\sqrt{3}}{2}\rho} & \frac{\Delta_{3,2}^{+,2}(\rho)}{\Delta ^{+,2}(\rho)}  \sim  \rho ^{-1} \\
 \hline
\frac{\Delta_{1,3}^{+,2}(\rho)}{\Delta ^{+,2}(\rho)}  \sim
\rho^{-2} &\frac{\Delta_{2,3}^{+,2}(\rho)}{\Delta ^{+,2}(\rho)}
\sim
 \rho^{-2}e^{-\frac{\sqrt{3}}{2}\rho}  & \frac{\Delta_{3,3}^{+,2}(\rho)}{\Delta ^{+,2}(\rho)}  \sim  \rho^{-2}
   \\ \hline
\end{tabular}
\end{center}

\begin{center}
\renewcommand{\arraystretch}{2}
\begin{tabular}{||>{$}c<{$}| >{$}c<{$}|>{$}c<{$}||}\hline
\frac{\Delta_{1,1}^{+,3}(\rho)}{\Delta ^{+,3}(\rho)}  \sim \rho
^{-2}e^{-\frac{\sqrt{3}}{2}\rho} &
\frac{\Delta_{2,1}^{+,3}(\rho)}{\Delta ^{+,3}(\rho)}
 \sim  \rho ^{-2}e^{- \sqrt{3} \rho} & \frac{\Delta_{3,1}^{+,3}(\rho)}{\Delta ^{+,3}(\rho)}  \sim \rho ^{-2} \\
\hline \frac{\Delta_{1,2}^{+,3}(\rho)}{\Delta ^{+,3}(\rho)} \sim
e^{-\frac{\sqrt{3}}{2}\rho} &
\frac{\Delta_{2,2}^{+,3}(\rho)}{\Delta ^{+,3}(\rho)} \sim
 e^{-\frac{\sqrt{3}}{2}\rho} & \frac{\Delta_{3,2}^{+,3}(\rho)}{\Delta ^{+,3}(\rho)}  \sim  1 \\
 \hline
\frac{\Delta_{1,3}^{+,3}(\rho)}{\Delta ^{+,3}(\rho)}  \sim
\rho^{-1} &\frac{\Delta_{2,3}^{+,3}(\rho)}{\Delta ^{+,3}(\rho)}
\sim
 \rho^{-1}e^{-\frac{\sqrt{3}}{2}\rho}  & \frac{\Delta_{3,3}^{+,3}(\rho)}{\Delta ^{+,3}(\rho)}  \sim  \rho^{-1}
   \\ \hline
\end{tabular}
\end{center}

\begin{center}
\renewcommand{\arraystretch}{2}
\begin{tabular}{||>{$}c<{$}| >{$}c<{$}|>{$}c<{$}||}\hline
\frac{\Delta_{1,1}^{+,4}(\rho)}{\Delta ^{+,4}(\rho)}  \sim
\rho^{-2}e^{-\frac{\sqrt{3}}{2}\rho} &
\frac{\Delta_{2,1}^{+,4}(\rho)}{\Delta ^{+,4}(\rho)}
 \sim  \rho ^{-2}e^{- \sqrt{3} \rho} & \frac{\Delta_{3,1}^{+,4}(\rho)}{\Delta ^{+,4}(\rho)}  \sim \rho ^{-2} \\
\hline \frac{\Delta_{1,2}^{+,4}(\rho)}{\Delta ^{+,4}(\rho)}
\sim\rho ^{-1} & \frac{\Delta_{2,2}^{+,4}(\rho)}{\Delta
^{+,4}(\rho)} \sim
 \rho ^{-1}e^{-\frac{\sqrt{3}}{2}\rho} & \frac{\Delta_{3,2}^{+,4}(\rho)}{\Delta ^{+,4}(\rho)}  \sim  \rho ^{-1} \\
 \hline
\frac{\Delta_{1,3}^{+,4}(\rho)}{\Delta ^{+,4}(\rho)}  \sim
\rho^{-2} &\frac{\Delta_{2,3}^{+,4}(\rho)}{\Delta ^{+,4}(\rho)}
\sim
 \rho^{-2}e^{-\frac{\sqrt{3}}{2}\rho}  & \frac{\Delta_{3,3}^{+,4}(\rho)}{\Delta ^{+,4}(\rho)}  \sim  \rho^{-2}
   \\ \hline
\end{tabular}
\end{center}
Then (\ref{2.6-1})-(\ref{2.6-4}) follow consequently. $\Box$

\smallskip
We consider next the linear IBVP with homogeneous boundary conditions
\begin{equation}
\label{y-4}
\begin{cases}
z_t +z_{xxx} +\delta _kz=f (x,t), \quad x\in (0,L), \ t>0,\\
z(x,0)= \phi (x),\\
\mathcal{B}_{k,0}z =0
\end{cases}
\end{equation}
for $k=1,2,3,4.$  By the standard semigroup theory, for any $\phi \in L^2 (0,L)$, $f\in L^1_{loc} (\mathbb{R}^+; L^2 (0,L)),$  the IBVP (\ref{y-4}) admits a unique solution
$z\in C(\mathbb{R}^+; L^2 (0,L))$ which can be written as
\[  z(x,t)=W_{0 ,k}(t) \phi  + \int ^t_0 W_{0,k} (t-\tau ) f(\cdot, \tau ) d\tau \]
where  $W_{0,k}(t)$ is the $C_0$-semigroup associated with the IBVP (\ref{y-4}) with $f\equiv 0$.  Recall the solution of  the Cauchy problem of the linear KdV equation,
\begin{equation}\label{y-5}
        \begin{cases}
            w_t+w_{xxx} +\delta _k w=0, & x \in \r, \  t\geq 0,\\
             w(x,0)=\psi(x),& x \in \r,
        \end{cases}
    \end{equation}
has the explicit representation
\begin{equation}\label{RSol1}
v(x,t)= [W_{\r,k} (t)]\psi(x)= c \int_{\r}
e^{i\xi^3 t-\delta _kt}e^{ix \xi } \hat{\psi} (\xi)d\xi.
\end{equation}
Here $\hat{\psi }$ denotes the Fourier transform of $\phi $. In terms of the $C_0$-group  $W_{\r,k}(t)$ and the boundary integral operator  ${\cal W}_{bdr} ^{(k)}$ , we can have a more explicit representation of solutions of the IBVP (\ref{y-4}).

Let $s\geq 0$ be given and $ B_s : H^s( 0, L) \to  H^s(\r) $ be  the standard extension operator from  $H^s(0, L) $ to $H^s(\r) $. For any $\phi\in H^s (0,L)$ and
$f\in L^1 _{loc} (\r ^+; H^s (0,L))$
let \[ \phi ^*= B_s \phi\] and  \[ f^* = B_s f.\]  \begin{lem} For  given $\phi \in L^2 (0,L)$ and $f\in L^1 _{loc}( \r^+; L^2 (0,L))$, let
\[  q_k(x,t)= W_{\r,k} (t)\phi ^* + \int ^t_0 W_{\r,k} (t-\tau ) f^* (\tau) d\tau \]
and
\[ \vec{h}_k := \mathcal{B}_{k,0} q, \quad k=1,2,3,4 .\] Then the solution of  the IBVP (\ref{y-4}) can be written as
\[  z (x,t)=  W_{\r,k} (t)\phi ^* + \int ^t_0 W_{\r,k} (t-\tau ) f^* (\tau) d\tau - {\cal W}^{(k)}_{bdr}  \vec{h}_k .\]
\end{lem}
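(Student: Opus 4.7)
The plan is to solve the IBVP (\ref{y-4}) by a splitting (lifting) argument: construct an auxiliary function on all of $\r$ that handles the Cauchy data and forcing but which fails to satisfy the boundary conditions, and then subtract off a pure boundary-value correction supplied by ${\cal W}^{(k)}_{bdr}$. The decomposition $z = q_k - {\cal W}^{(k)}_{bdr}\vec{h}_k$ is the natural candidate, and the task is only to verify that each of the three requirements of (\ref{y-4}) is satisfied.

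First, I would check that $q_k$ solves the linearized KdV equation on the full line. By the definition of the $C_0$-group $W_{\r,k}(t)$ through the formula (\ref{RSol1}), and the Duhamel principle, $q_k$ satisfies $(q_k)_t + (q_k)_{xxx} + \delta_k q_k = f^*$ on $\r\times \r^+$ with $q_k(x,0)=\phi^*(x)$. Since $f^*$ and $\phi^*$ are extensions of $f$ and $\phi$ via $B_s$, the restriction of $q_k$ to $(0,L)$ satisfies the evolution equation with right-hand side $f$ and initial value $\phi$, but its boundary traces are, in general, nonzero and form precisely the vector $\vec{h}_k = \mathcal{B}_{k,0}q_k$.

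Second, I would apply the appropriate one of Lemmas \ref{lem1a}--\ref{l4-d} to $\vec{h}_k$ to form $w_k := {\cal W}^{(k)}_{bdr}\vec{h}_k$. By construction $w_k$ solves the pure boundary-value problem
\[
(w_k)_t + (w_k)_{xxx} + \delta_k w_k = 0, \qquad w_k(x,0)=0, \qquad \mathcal{B}_{k,0}w_k = \vec{h}_k.
\]
Setting $z := q_k - w_k$ on $(0,L)\times \r^+$, linearity yields $z_t + z_{xxx} + \delta_k z = f$, $z(x,0) = \phi(x)$, and $\mathcal{B}_{k,0}z = \vec{h}_k - \vec{h}_k = 0$. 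Thus $z$ satisfies all three conditions of (\ref{y-4}), and uniqueness in the $C_0$-semigroup framework already invoked for $W_{0,k}(t)$ identifies this $z$ with the solution of (\ref{y-4}), which is exactly the claimed representation.

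The main obstacle I foresee is purely a regularity/trace issue: for general $\phi\in L^2(0,L)$ and $f\in L^1_{loc}(\r^+;L^2(0,L))$ the extended Cauchy solution $q_k$ lies only in $C(\r^+;L^2(\r))$, so the boundary traces $q_k(0,t)$, $q_k(L,t)$, $\partial_x q_k(L,t)$, $\partial_x^2 q_k(0,t)$, $\partial_x^2 q_k(L,t)$ appearing in $\mathcal{B}_{k,0}q_k$ need to be interpreted and controlled in the correct function spaces. This is supplied by the sharp Kato smoothing (trace) properties of the group $W_{\r,k}(t)$ on the real line, which place $\partial_x^l q_k(x_0,\cdot)$ in $H^{(1-l)/3}_{loc}(\r^+)$ for $x_0\in\{0,L\}$, $l=0,1,2$; these are exactly the regularities built into ${\cal H}^0_k(0,T)$, so $\vec{h}_k$ lies in the proper space for ${\cal W}^{(k)}_{bdr}$ to act on. Once this trace identification is in place, the subtraction argument above delivers the formula.
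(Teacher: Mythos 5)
Your proof is correct and is exactly the intended argument: the paper states this lemma without proof, and the standard lifting decomposition $z=q_k-{\cal W}^{(k)}_{bdr}\vec{h}_k$, verified term by term and identified with the semigroup solution by uniqueness, is precisely what the authors have in mind (following the construction of Bona, Sun and Zhang for the finite-domain problem). Your closing remark on the trace issue is also the right one — the boundary values $\vec{h}_k=\mathcal{B}_{k,0}q_k$ are given meaning by the sharp Kato smoothing properties of the group $W_{\mathbb{R},k}(t)$ (Lemma \ref{FSP1b} and its Corollary), which place them in ${\cal H}^0_k(0,T)$ so that ${\cal W}^{(k)}_{bdr}$ may be applied.
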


\subsection{Linear estimates}\label{sec22}
In this subsection we consider the following IBVP of the linear equations:
\begin{equation}\label{LSP}
        \begin{cases}
            v_t+ v_{xxx} +\delta _k v=f, \quad v(x,0)=\phi (x), & x \in (0,L), \  t\geq 0,\\
            \mathcal{B}_{k,0}v=\vec{h}(t),& t\geq 0
        \end{cases}
    \end{equation}
    and present various linear estimates for its solutions.
For given $s\geq 0$ and $T>0$, let us consider:
\[Z_{s,T}:=C([0,T];H^s (0,L))\cap L^2 (0,T; H^{s+1}(0,L))\]
and
$$X^k_{s,T}:=H_0^s (0,L)\times\mathcal{H}^s_{k} (0,T), \text{} \text{ for } k=1,2,3,4.$$
Recall that  when $f=0$ and $\phi=0$, the solution $v$ of the IBVP  \eqref{LSP}  can be written in the form
\[
v(x,t)=[W_{bdr}^{(k)}\vec{h}](x,t):= \sum_{j,m=1}^3
[W_{j,m}^{(k)}h_{m}](x,t),
\]
where
\[
 [W_{j,m}^{(k)}h](x,t) \equiv [U_{j,m}^{(k) }h](x,t) +\overline{[U_{j,m}^{(k)}h](x,t)}
\]
with
\[ [U_{j,m}^{(k)} h](x,t)\equiv \frac{1}{2\pi } \int ^{+\infty }_{0} e^{i\rho ^3
t} e^{\lambda ^+_j (\rho ) x}  \hat{h}^*_{j,m,k} (\rho) d\rho
\]
for $k=1,2,3,4$, $j=1,3, \ m=1,2,3 $ and
\[
 [U_{2,m}^{(k)} h](x,t) \equiv  \frac{1}{2\pi } \int ^{+\infty }_{0} e^{i \rho ^3
 t} e^{-\lambda ^+_2 (\rho ) (1-x)}    \hat{h}^*_{2,m,k} (\rho)d\rho \label{ge_3}
\]
for $k=1,2,3,4$ and $m=1,2,3$. Here \[\label{ge_4}\hat{h}^*_{j,m,k} (\rho)= 3\rho ^2 \frac{\Delta ^{+,k}_{j,m} (\rho ) } {\Delta
^{+,k} (\rho )} \hat{h}   ^+ (\rho ), \qquad \hat{h}^*_{2,m,k} (\rho)=3\rho ^2 \frac{\Delta ^{+,k}_{2,m} (\rho ) } {\Delta ^{+,k} (\rho
)} e^{\lambda ^+_2 (\rho )} \hat{h} ^+ (\rho )\]
for
$k=1,2,3,4$, $j=1,3 $ and $m=1,2,3$.

\begin{prop}\label{l1}
Let  $0\leq s\leq 3$ with
$s\neq\frac{2j-1}{2}, \text{} \text{}j=1,2,3,  $ and $T>0$ be given.   There exists a constant $C>0$  such that for any $\vec{h}\in\mathcal{H}^s_k (0,T)$ ,
\[ z_k= {\cal W}_{bdr}^(k) \vec{h}\]
satisfies
$$\|z_k\|_{\mathcal{Z}_{s,T}}+\sum_{j=0}^2\|\partial^j_xz_k\|_{L^{\infty}(0,L;H^{\frac{s+1-l}{3}}(0,T))}\leq C\|\vec{h}\|_{\mathcal{H}^s_{k} (0,T)}$$
for $k=1,2,3,4$ and $l=0,1,2$.
\end{prop}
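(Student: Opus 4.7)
The proof decomposes $z_k = \sum_{j,m=1}^3 W_{j,m}^{(k)} h_m$ and treats each of the nine pieces separately. For the six pieces with $j \in \{2,3\}$, the integrals in the representations of Lemmas~\ref{lem1a}--\ref{l4-d} carry exponential factors $e^{\lambda_3^+(\rho)x}$ or $e^{-\lambda_2^+(\rho)(1-x)}$, whose real parts equal $-\tfrac{\sqrt 3}{2}\rho x$ and $-\tfrac{\sqrt 3}{2}\rho(1-x)$ respectively. This exponential decay in $\rho$, combined with the Sobolev regularity of the transformed boundary data $h^*_{j,m,k}$ supplied by Lemma~\ref{l5-abcd}, allows one to bound the $C_t H^s_x$, $L^2_t H^{s+1}_x$, and sharp Kato $L^\infty_x H^{(s+1-l)/3}_t$ norms directly by differentiating under the integral sign, and applying Plancherel in the variable $\tau = \rho^3$ together with Minkowski in $x$. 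No cancellation or oscillatory analysis is needed for these six pieces.

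The remaining three pieces, those with $j=1$, have kernel $e^{\lambda_1^+(\rho)x} = e^{i\rho x}$, which offers no decay, so a different strategy is required. I would identify these pieces with a free KdV evolution on the line. Define
\[
\widehat{g_{m,k}}(\xi) := \widehat{h^*_{1,m,k}}(\xi)\, \chi_{[0,\infty)}(\xi) + \overline{\widehat{h^*_{1,m,k}}(-\xi)}\, \chi_{(-\infty,0)}(\xi).
\]
A change of variable $\rho \mapsto -\rho$ in the conjugate half of \eqref{2.2-1a} then yields
\[
[W_{1,m}^{(k)} h_m](x,t) = [W_{\mathbb{R},k}(t)\, g_{m,k}](x),
\]
exactly for $k=1,2,3$, and modulo a low-frequency smooth correction for $k=4$ (since $\Delta^{+,4}_{1,m}/\Delta^{+,4}$ agrees asymptotically with $\Delta^{+,1}_{1,m}/\Delta^{+,1}$ up to $O(\rho^{-1})$ terms, so the correction is bounded by a finite-rank smoothing operator). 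Lemma~\ref{l5-abcd} guarantees $g_{m,k} \in H^s(\mathbb{R})$ with $\|g_{m,k}\|_{H^s(\mathbb{R})} \leq C\|\vec h\|_{\mathcal{H}^s_k(0,T)}$, and the required estimates then follow from the classical properties of the free KdV group: the energy bound $\|W_{\mathbb{R},k}(t)g\|_{C_tH^s} \leq C\|g\|_{H^s}$, the Kato smoothing into $L^2_tH^{s+1}_{x,\mathrm{loc}}$, and, crucially, the sharp Kato smoothing $\partial_x^l W_{\mathbb{R},k}(t)g \in L^\infty_x H^{(s+1-l)/3}_t$ for $l=0,1,2$ from \cite{KPV91-1,BSZ03FiniteDomain}.

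The main obstacle is this identification of the $j=1$ pieces with the free KdV evolution, since it is precisely this step that transfers the known sharp Kato smoothing of the Cauchy problem into the sharp Kato smoothing for the boundary integral operator $\mathcal{W}_{bdr}^{(k)}$; everything else is either routine exponential-decay bookkeeping or a direct application of Plancherel's theorem. The restriction $s\leq 3$ keeps the number of $\rho$-derivatives that must be absorbed into the asymptotic quotients $\Delta^{+,k}_{j,m}/\Delta^{+,k}$ manageable via the table in Lemma~\ref{l5-abcd}, while the exclusion of $s=(2j-1)/2$ ensures that $H^s_0(0,T]$ admits zero extension to $\mathbb{R}^+$ without loss of Sobolev regularity, validating the line-Fourier analysis in time throughout.
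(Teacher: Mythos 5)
Your proposal is correct and follows essentially the same strategy as the paper: decompose $z_k$ into the pieces $W^{(k)}_{j,m}h_m$, feed the regularity of the transformed data $h^*_{j,m,k}$ supplied by Lemma \ref{l5-abcd} into the explicit oscillatory-integral representation, and read off the estimates via Plancherel after the substitution $\mu=\rho^3$. The one genuine difference is how the non-decaying $j=1$ pieces are treated. You make the identification with the free group $W_{\mathbb{R},k}(t)$ explicit by reflecting the half-line frequency data $\widehat{h^*_{1,m,k}}$ into a function $g_{m,k}\in H^s(\mathbb{R})$ and then import the group's known sharp Kato smoothing; this is exact for $k=1,2,3$, and your perturbative handling of $k=4$ (where $\lambda_1^+(\rho)-i\rho=O(\rho^{-2})$, so the kernel discrepancy contributes a harmless smoothing correction, though calling it ``finite-rank'' is inaccurate) is also sound. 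The paper instead works directly on the half-line integral: it gets the $C([0,T];H^s)$ bound by applying an oscillatory-integral lemma from \cite{BSZ03} at the endpoints $s=0$ and $s=3$ and interpolating, and it obtains the $L^\infty_x H^{(s+1-l)/3}_t$ bounds by a direct Plancherel computation in $t$, using only that $|e^{\lambda_j^+(\rho)x}|\leq 1$ (suitably normalized) for all three roots, so all nine pieces are handled uniformly and only the worst case $j=1$ is written out. The two routes are interchangeable: yours externalizes the hard oscillatory estimate to the Cauchy theory on the line, while the paper's keeps the Kato-smoothing computation self-contained at the cost of citing the endpoint group estimate.
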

\begin{proof}
We only consider the case that $\vec{h}=(h_1,0,0)$ and $k=4$; the proofs for the others cases are similar. Note that, the solution $z_4 $ can be written as
$$z_4(x,t)=w_1(x,t)+w_2(x,t)+w_3(x,t)$$
with
\[  w_j (x,t) := [W_{j,1}^{(4)}h_{1}](x,t) = [U_{j,1}^{(4) }h_1](x,t) +\overline{[U_{j,1}^{(4)}h_1](x,t)}, \quad j=1,2,3.\]
Let us prove Proposition  \ref{l1} for $w_1$.  It suffices to only consider
\[ w_1^+(x,t):= [U_{1,1}^{(4) }h_1](x,t)  =\frac{1}{2\pi } \int ^{+\infty }_{0} e^{i\rho ^3
t} e^{\lambda ^+_1 (\rho ) x}
\hat{h}^*_{1,1,4}(\rho )d\rho.\]



Applying \cite[Lemma 2.5]{BSZ03}, we have,
\begin{align*}
\sup_{t\in[0,T]}\|w^+_1(\cdot,t)\|^2_{L^2(0,L)} & \leq C\int_0^{\infty}\left|\hat{h}^*_{1,1,4}(\rho )\right|^2d\rho\\
&  \leq C\|h_{1}\|^2_{H^{-\frac{1}{3}}(\mathbb{R}^+)}
\end{align*}
and
\begin{align*}
\sup_{t\in[0,T]}\|\partial^3_xw^+_1(\cdot,t)\|^2_{L^2(0,L)}& \leq C\int_0^{\infty}\left|\lambda^+_1(\rho)\right|^6\left|\hat{h}^*_{1,1,4}(\rho )\right|^2d\rho\\
&  \leq C\|h_{1}\|^2_{H^{\frac{2}{3}}(\mathbb{R}^+)}.
\end{align*}
 By interpolation, we have
$$\sup_{t\in[0,T]}\|w^+_1(\cdot,t)\|^2_{H^s(0,L)}\leq C\|h_{1}\|^2_{H^{\frac{s-1}{3}}(\mathbb{R}^+)} $$
 for $0\leq s\leq3$. Furthermore, for $l=0,1,2$, let $\mu=\rho^3$, $\rho\geq 0$, then
\begin{align*}
\partial^l_xw_1(x,t)&=\frac{1}{2\pi}\int_0^{+\infty}\left(\lambda^+_1(\rho)\right)^le^{i\rho^3t}e^{\lambda_1^+(\rho)x} \hat{h}^*_{1,1,4}(\rho )d\rho\\
& = \frac{1}{2\pi} \int_0^{+\infty}\left(\lambda^+_1(\mu ^{\frac13}) \right) ^le^{i\mu t}e^{\lambda_1^+(\mu ^{\frac13})x}\hat{h}^*_{1,1,4}(\mu ^{\frac13}) \mu ^{-\frac23}d\mu.
\end{align*}
Applying Plancherel theorem, in time $t$, yields that, for all $x\in(0,L)$,
\begin{align*}
\|\partial^{l}_xw_1(x,\cdot)\|^2_{H^{\frac{s+1-l}{3}}(0,T)}&\leq C\int_0^{+\infty}\mu ^{\frac{2(s+1-l)}{3}}\left|\left(\lambda^+_1(\mu ^{\frac13}) \right) ^le^{\lambda_1^+(\mu ^{\frac13})x}\hat{h}^*_{1,1,4}(\mu ^{\frac13}) \mu ^{-\frac23}\right|^2d\mu\\
&\leq C\int_0^{+\infty}\left|(\lambda^+_1(\rho))^{l})\hat{h}^*_{1,1,4}(\rho )\right|^2\rho^{2s-2l}d\rho\\
& \leq C\int_0^{+\infty}\rho^{2s}\left|\hat{h}^*_{1,1,4}(\rho )\right|^2d\rho\\
& \leq C\|h_{1}\|^2_{H^{\frac{s-1}{3}}(\mathbb{R}^+)},
\end{align*}
for $l=0,1,2$. Consequently, for $0\leq s\leq3$ and $l=0,1,2$, we have
$$ \sup_{x\in(0,L)}\|\partial^l_xw_1(x,\cdot)\|_{H^{\frac{s+1-l}{3}}(0,T)}\leq C\|h_{1}\|_{H^{\frac{s-1}{3}}(\mathbb{R}^+)},$$
which ends the proof of Proposition \ref{l1} for $w_1$. The proof for $w_j$, $j=2,3$, are similar therefore will be omitted.
\end{proof}

Next we consider the following initial boundary-value problem:
    \begin{equation}\label{w1}
        \begin{cases}
            v_t+ v_{xxx}+ \delta _k v=f, \quad v(x,0)=\phi(x),  & x \in (0,L), \  t\geq 0,\\
            B_{k,0}v=0,& t\geq 0,
        \end{cases}
    \end{equation}
    for $k=1,2,3,4$.
Recall  that  for any  $s\in \r$,  $\psi \in H^s (\r)$ and $g\in L^1_{loc} (\r^+; H^s (\r))$, the Cauchy problem of  the following linear KdV equation posed on $\r$,
\begin{equation}\label{RP1}
        \begin{cases}
            w_t+w_{xxx}+ \delta _k w=g, & x \in \r, \  t\geq 0,\\
             w(x,0)=\psi(x),& x \in \r
        \end{cases}
    \end{equation}
 admits a unique  solution $v\in C(\r^+; H^s (\r))$ and  possess the  well-known  sharp Kato smoothing properties.
 \begin{lem}\label{FSP1b}
Let $T>0$, $L>0$ and $s\in \r$ be given. For any $\psi \in H^s(\mathbb{R})$, $g\in L^1(0,T;H^s(\mathbb{R}))$, the solution $w$ of the system \eqref{RP1} admits a unique solution
$w\in Z_{s,T}$
with
$$\partial^l_xw\in L^{\infty}_x(\mathbb{R};H^{\frac{s+1-l}{3}}(0,T)).$$
Moreover, there exists a constant $C>0$, depending only $s$  and $T$, such that
$$
\|w\|_{\mathbb{Z}}+\sum_{l=0}^2\|\partial^l_xw(x,\cdot)\|_{L_x^{\infty}(\mathbb{R};H^{\frac{s+1-l}{3}}(0,T))}\leq C\left(\|\psi \|_{H^s(\mathbb{R}^)}+\|g\|_{L^1(0,T;H^s(\mathbb{R}))} \right),$$
for $l=0,1,2$.
\end{lem}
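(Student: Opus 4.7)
The plan is to split the proof by Duhamel's principle: writing $w = W_{\mathbb{R},k}(t)\psi + \int_0^t W_{\mathbb{R},k}(t-\tau)g(\cdot,\tau)\,d\tau$, it suffices to prove all three estimates for the free evolution $u(x,t):=[W_{\mathbb{R},k}(t)\psi](x) = \frac{1}{2\pi}\int_{\mathbb{R}} e^{ix\xi+i\xi^3 t-\delta_k t}\hat\psi(\xi)\,d\xi$ and then recover the inhomogeneous case by Minkowski's inequality in the $\tau$-variable. The continuity $u\in C([0,T];H^s(\mathbb{R}))$ is immediate from Plancherel in $x$: $\|u(\cdot,t)\|_{H^s(\mathbb{R})}^2 = e^{-2\delta_k t}\|\psi\|_{H^s(\mathbb{R})}^2$, with strong continuity following from dominated convergence, and the $C([0,T];H^s(0,L))$ statement then follows by restriction.

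For the sharp Kato smoothing $\partial_x^l u\in L^\infty_x(\mathbb{R};H^{(s+1-l)/3}(0,T))$, which is the heart of the lemma, I carry out the change of variables $\mu=\xi^3$ inside the Fourier representation, producing
\begin{equation*}
\partial_x^l u(x,t)=\frac{1}{6\pi}\int_{\mathbb{R}} e^{i\mu t-\delta_k t}(i\mu^{1/3})^l e^{ix\mu^{1/3}}\hat\psi(\mu^{1/3})\mu^{-2/3}\,d\mu.
\end{equation*}
Plancherel in $t$ (after trivially extending to $\mathbb{R}$, then restricting to $(0,T)$) gives, uniformly in $x$,
\begin{equation*}
\|\partial_x^l u(x,\cdot)\|_{H^{(s+1-l)/3}(\mathbb{R})}^2\lesssim\int_{\mathbb{R}}(1+|\mu|^2)^{(s+1-l)/3}|\mu|^{2l/3-4/3}|\hat\psi(\mu^{1/3})|^2\,d\mu,
\end{equation*}
which, on reverting to $\xi$, becomes $\int(1+|\xi|^6)^{(s+1-l)/3}|\xi|^{2l-2}|\hat\psi(\xi)|^2\,d\xi\lesssim\|\psi\|_{H^s(\mathbb{R})}^2$ by the matching of high-frequency exponents $6\cdot(s+1-l)/3+2l-2=2s$. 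A fortiori, the same computation with the fractional operator $(-\partial_x^2)^{(s+1)/2}$ in place of $\partial_x^l$ yields the global Kato bound $\sup_{x\in\mathbb{R}}\|(-\partial_x^2)^{(s+1)/2}u(x,\cdot)\|_{L^2(0,T)}^2\lesssim\|\psi\|_{H^s(\mathbb{R})}^2$.

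The $L^2(0,T;H^{s+1}(0,L))$ estimate then follows by Fubini: integrating the previous uniform-in-$x$ bound over the bounded interval $(0,L)$ produces a factor of $L$, and combining this $\dot H^{s+1}$-control with the already-established $L^\infty_t L^2$ bound gives the full $H^{s+1}(0,L)$ norm. The inhomogeneous term is controlled by applying Minkowski's integral inequality to each of these three norms, which is legitimate since all three norms (including the $H^{(s+1-l)/3}(0,T)$-trace norms) are Banach-space valued in the appropriate $L^2$ sense. The main technical obstacle I anticipate is handling the branch of $\mu^{1/3}$ for negative $\mu$ consistently so that the change of variables $\mu=\xi^3$ is genuinely bijective on $\mathbb{R}\setminus\{0\}$ with the correct Jacobian (which it is, since the real cube root is a diffeomorphism), and carefully tracking the sharpness of the exponent $(s+1-l)/3$; the Minkowski reduction from the inhomogeneous to the homogeneous case is then routine.
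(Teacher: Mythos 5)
Your overall strategy (free evolution via Plancherel after the substitution $\mu=\xi^3$, then Duhamel plus Minkowski for the forcing) is the standard route; for comparison, the paper itself offers no proof of this lemma at all, quoting it as the ``well-known'' sharp Kato smoothing property, so any complete argument here goes beyond what the paper records. However, your proof as written has a genuine gap at low frequencies in the case $l=0$. After Plancherel in $t$ your bound reads
$$\|u(x,\cdot)\|_{H^{(s+1)/3}(\mathbb{R}_t)}^2\lesssim\int_{\mathbb{R}}(1+\xi^6)^{(s+1)/3}|\xi|^{-2}|\hat\psi(\xi)|^2\,d\xi,$$
and the weight $|\xi|^{-2}$ is not integrable near $\xi=0$ against a generic $|\hat\psi|^2$; indeed $\int_{\mathbb{R}}|u(x,t)|^2\,dt=c\int_{\mathbb{R}}|\xi|^{-2}|\hat\psi(\xi)|^2\,d\xi$ is $+\infty$ for any $\psi\in H^s$ with $\hat\psi(0)\neq0$, so the global-in-time trace norm you propose to compute and then ``restrict to $(0,T)$'' is infinite and the restriction argument yields nothing. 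You verified only the matching of the high-frequency exponents. The standard repair is to split $\hat\psi$ into $|\xi|\le1$ and $|\xi|\ge1$: the Plancherel computation is valid for the high-frequency piece, while for the low-frequency piece one estimates $\partial_t^m\partial_x^l u_{\mathrm{low}}(x,t)=\int_{|\xi|\le1}(i\xi^3-\delta_k)^m(i\xi)^l e^{ix\xi+i\xi^3t-\delta_kt}\hat\psi(\xi)\,d\xi$ pointwise by Cauchy--Schwarz, obtaining $\|\partial_x^lu_{\mathrm{low}}(x,\cdot)\|_{H^m(0,T)}\le C_{T,m}\|\psi\|_{H^s}$ for every integer $m$, uniformly in $x$; this uses the finiteness of $(0,T)$ in an essential way. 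The same low-frequency splitting is needed in your $L^2(0,T;H^{s+1})$ step, which is why you are forced to the homogeneous derivative there and must recover the rest from the $L^\infty_tL^2_x$ bound.

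A second point you should not dismiss as routine: in the Duhamel step, the trace norm of $\int_0^tW_{\mathbb{R},k}(t-\tau)g(\cdot,\tau)\,d\tau$ in $H^{(s+1-l)/3}(0,T)$ involves the cut-off $\mathbf{1}_{\{\tau<t\}}$, and multiplication by this indicator is not bounded on $H^r(0,T)$ once $r>\tfrac12$, i.e.\ once $s>\tfrac12$ for $l=0$. Minkowski's integral inequality therefore does not apply directly in the range $0\le s\le3$ that the paper actually uses; one has to argue as in the integral-of-semigroup lemmas of Bona--Sun--Zhang, or prove the estimate for $s\le\tfrac12$ and interpolate. With these two repairs your argument goes through.
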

\begin{cor} Let $0\leq s \leq 3$ and $T>0$ be given. For any $\phi \in H^s _0(0,L)$ and $g\in L^1 (0,T; H^s (\r)),$  let $\psi \in H^s (\r)$ be zero extension of $\phi $ from $(0,L)$ to $\r$. If
\[ \vec{q}_k: = \mathcal{B}_{k,0} w, \quad  k=1,2,3,4,\]
then
\[ \vec{q}_k \in {\cal H}^s_k (0,T) \]
and
\[ \| \vec{q}_k\|_{{\cal H}^s_k (0,T)} \leq C\left(\|\phi\|_{H^s(0,L)}+\|g\|_{L^1(0,T;H^s(\mathbb{R}))} \right)\]
for $k=1,2,3, 4$.
\end{cor}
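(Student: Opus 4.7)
The plan is to deduce this corollary directly from Lemma~\ref{FSP1b} (the sharp Kato smoothing for the Cauchy problem on $\mathbb{R}$), combined with the fact that $\phi \in H^s_0(0,L)$ makes its zero extension well-behaved on the whole line.

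First I would apply Lemma~\ref{FSP1b} to $w$, the solution of \eqref{RP1} with initial datum $\psi$ and forcing $g$, to obtain
\[
\sum_{l=0}^{2}\|\partial_x^l w(x,\cdot)\|_{L_x^{\infty}(\mathbb{R};H^{(s+1-l)/3}(0,T))}\ \le\ C\bigl(\|\psi\|_{H^s(\mathbb{R})}+\|g\|_{L^1(0,T;H^s(\mathbb{R}))}\bigr).
\]
Since $\psi$ is the zero extension of $\phi\in H^s_0(0,L)$, this extension is continuous and $\|\psi\|_{H^s(\mathbb{R})}\lesssim\|\phi\|_{H^s(0,L)}$. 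For each $k=1,2,3,4$, every component of $\vec{q}_k=\mathcal{B}_{k,0} w$ is a trace of the form $\partial_x^l w(0,\cdot)$ or $\partial_x^l w(L,\cdot)$ with $l\in\{0,1,2\}$; by the $L_x^{\infty}$ part of the Kato estimate such traces are pointwise defined in $x$ and lie in $H^{(s+1-l)/3}(0,T)$. Comparing slot by slot with the definitions of $\mathcal{H}^s_k(0,T)$ in \eqref{x-1}, and using the continuous embedding $H^a(0,T)\hookrightarrow H^b(0,T)$ for $a\ge b$ in the few slots where the target exponent is lower than the one delivered by Kato, each component sits in a space with at least the required Sobolev regularity, with norm controlled by $C(\|\phi\|_{H^s(0,L)}+\|g\|_{L^1(0,T;H^s(\mathbb{R}))})$.

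The one technical point, which I view as the main obstacle, is the verification of the vanishing-at-$t=0$ conditions built into $H^{\cdot}_0(0,T]$. For this I would use that $\phi\in H^s_0(0,L)$ implies $\psi^{(j)}(0)=\psi^{(j)}(L)=0$ for $0\le j\le[s]$, so $\partial_x^l w(x_0,\cdot)\big|_{t=0}=\psi^{(l)}(x_0)=0$ for $x_0\in\{0,L\}$. The higher-order vanishing of the $t$-derivatives at $t=0$ is obtained by repeatedly differentiating the equation $w_t=-w_{xxx}-\delta_k w+g$ in $t$ and evaluating at $(x_0,0)$: the resulting expressions reduce to linear combinations of $\psi^{(j)}(x_0)$ (which vanish) plus terms in $g$ and its temporal moments at $t=0$, which are handled by the natural compatibility of $g$ at $t=0$ (or, if $g$ is merely $L^1_t H^s_x$, by a density argument in which one first proves the bound for $g$ smooth and supported away from $t=0$ and then passes to the limit). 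Assembling the three components into $\mathcal{H}^s_k(0,T)$ for each $k$ yields the stated estimate; the analytic content is entirely contained in Lemma~\ref{FSP1b} and the zero-trace hypothesis on $\phi$, while the rest is a case-by-case bookkeeping of the index arithmetic.
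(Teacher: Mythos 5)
Your argument is correct and is precisely the route the paper intends: the corollary is stated there without proof as an immediate consequence of Lemma~\ref{FSP1b}, and your trace-plus-bookkeeping argument (including the verification of the vanishing conditions at $t=0$, which the paper leaves implicit and which for $0\le s\le 3$ reduces to $\psi^{(l)}(x_0)=0$ since all the temporal exponents stay below $3/2$) supplies exactly the missing details. The only caveat is that the slot-by-slot comparison for $k=4$ naturally yields the ordering $H^{(s-1)/3}\times H^{s/3}\times H^{(s-1)/3}$ dictated by $\mathcal{B}_{4,0}w=(w_{xx}(0,\cdot),w_x(L,\cdot),w_{xx}(L,\cdot))$ and by \eqref{2.6-4}, rather than the ordering printed in \eqref{x-1}, whose last two factors appear to be transposed, so no genuine appeal to the embedding $H^a\hookrightarrow H^b$ is actually needed.
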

The following two propositions follow from Proposition \ref{l1} and Lemma \ref{FSP1b}.
 \begin{prop}\label{p3} Let $T>0$ and $0\leq s\leq 3$  with
$s\neq\frac{2j-1}{2}, \text{} \text{}j=1,2,3, $ be given. There exists a constant $C>0$ such that for any  $(\phi,
    \vec{h})\in X^k_{s,T}$ and $f\in L^1(0,T; H^s (0,L))$, the IBVP \eqref{LSP} admits a unique
    solution $v\in Z_{s,T}$ satisfying
    \[ \| v\|_{Z_{s,T}}\leq
    C\left (\| (\phi , \vec{h})\|_{X^k_{s,T}} +\|f\|_{L^1(0,T; H^s (0,L))}\right ).\]
    \end{prop}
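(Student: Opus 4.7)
The plan is to prove the proposition by reducing the inhomogeneous problem \eqref{LSP} to the superposition of two pieces already handled: a Cauchy-problem part on the whole line (with data extended from $(0,L)$ to $\mathbb{R}$) and a boundary-integral part. First, I would split the solution as $v = z + \mathcal{W}^{(k)}_{bdr}\vec{h}$, where $z$ is expected to solve the IBVP \eqref{y-4} with forcing $f$, initial data $\phi$ and homogeneous boundary operator $\mathcal{B}_{k,0} z = 0$. Using the representation formula for $z$ stated in the lemma immediately preceding Subsection~\ref{sec22}, this yields the explicit decomposition
\[
v(x,t) = W_{\mathbb{R},k}(t)\phi^{*} + \int_0^t W_{\mathbb{R},k}(t-\tau) f^{*}(\cdot,\tau)\,d\tau \; - \; \mathcal{W}^{(k)}_{bdr}\vec{q}_k \; + \; \mathcal{W}^{(k)}_{bdr}\vec{h},
\]
where $\phi^{*} = B_s \phi$, $f^{*} = B_s f$ denote the extensions from $(0,L)$ to $\mathbb{R}$, and $\vec{q}_k = \mathcal{B}_{k,0}\!\left(W_{\mathbb{R},k}(t)\phi^{*} + \int_0^t W_{\mathbb{R},k}(t-\tau)f^{*}(\cdot,\tau)\,d\tau\right)$.

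Second, I would estimate each of the three terms in the $Z_{s,T}$-norm. The Cauchy contribution is controlled directly by Lemma~\ref{FSP1b}, giving
\[
\left\|W_{\mathbb{R},k}(t)\phi^{*} + \int_0^t W_{\mathbb{R},k}(t-\tau)f^{*}\,d\tau\right\|_{Z_{s,T}} \leq C\bigl(\|\phi\|_{H^s(0,L)} + \|f\|_{L^1(0,T;H^s(0,L))}\bigr),
\]
after absorbing the continuity constants of the extension operator $B_s$. For the two boundary-integral terms, I would invoke Proposition~\ref{l1} to bound $\|\mathcal{W}^{(k)}_{bdr}\vec{h}\|_{Z_{s,T}}$ by $\|\vec{h}\|_{\mathcal{H}^s_k(0,T)}$ and $\|\mathcal{W}^{(k)}_{bdr}\vec{q}_k\|_{Z_{s,T}}$ by $\|\vec{q}_k\|_{\mathcal{H}^s_k(0,T)}$. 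The Corollary between Lemma~\ref{FSP1b} and Proposition~\ref{p3} provides the key estimate $\|\vec{q}_k\|_{\mathcal{H}^s_k(0,T)} \leq C(\|\phi\|_{H^s(0,L)} + \|f\|_{L^1(0,T;H^s(\mathbb{R}))})$, and combining these yields the desired bound.

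For uniqueness, I would argue by linearity: if $v_1$ and $v_2$ are two solutions in $Z_{s,T}$, then $w = v_1 - v_2$ solves \eqref{y-4} with $\phi = 0$ and $f = 0$, and the standard semigroup theory for $W_{0,k}(t)$ applied to the IBVP with homogeneous boundary conditions forces $w \equiv 0$; an $L^2$ energy estimate based on integration by parts in $(0,L)\times(0,T)$, using the sign structure of the $\mathcal{B}_{k,0}$ boundary terms, gives an alternative route.

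The main technical delicacy I anticipate is verifying that $\vec{q}_k$ actually lies in the space $\mathcal{H}^s_k(0,T)$ with the compatible $H_0$-subscript structure (the vanishing traces at $t=0$). This requires the excluded half-integer exceptions $s \neq (2j-1)/2$ to avoid obstructions from the $H^{(s+1)/3}_0$, $H^{s/3}_0$, $H^{(s-1)/3}_0$ trace characterizations, and is exactly why the Corollary must be invoked with care. Once the Corollary is granted, the remaining estimates are a bookkeeping exercise combining Lemma~\ref{FSP1b} and Proposition~\ref{l1}.
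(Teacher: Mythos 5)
Your proposal is correct and follows essentially the same route as the paper: the paper's own proof is a one-line citation of Lemmas \ref{l4-d}, \ref{l5-abcd}, \ref{FSP1b} and Proposition \ref{l1}, and the decomposition you write out (whole-line Cauchy part from extended data, minus $\mathcal{W}^{(k)}_{bdr}\vec{q}_k$, plus $\mathcal{W}^{(k)}_{bdr}\vec{h}$, estimated via Lemma \ref{FSP1b}, the Corollary, and Proposition \ref{l1}) is exactly the argument those citations encode. Your version is in fact more explicit than the paper's about where each estimate comes from and about the compatibility issue for $\vec{q}_k$.
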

\begin{proof}
From lemmas \ref{l4-d}, \ref{l5-abcd}, \ref{FSP1b} and Proposition \ref{l1} the result holds.
\end{proof}

In addition, the solution $v$ of \eqref{LSP} posses the following  sharp Kato smoothing properties.
\begin{prop}\label{p4} Let $T>0$ and $0\leq s\leq 3$  with
$s\neq\frac{2j-1}{2}, \text{} \text{}j=1,2,3, $ be given. For any $\phi\in H^s(0,L)$, $f\in L^1(0,T;H^s(0,L))$ and $\vec{h}\in\mathcal{H}^k_{s,T}$, the solution $v$ of the system \eqref{LSP} satisfies
\begin{equation}
\sup_{x\in(0,L)}\|\partial^r_xv(x,\cdot)\|_{H^{\frac{s+1-r}{3}}(0,T)}\leq C\left (\| (\phi , \vec{h})\|_{X^k_{s,T}} +\|f\|_{L^1(0,T; H^s (0,L))}\right ),
\label{w7}
\end{equation}
for $r=0,1,2$.
\end{prop}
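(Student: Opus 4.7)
The plan is to treat Proposition \ref{p4} as an assembly of the two technical workhorses that immediately precede it: Proposition \ref{l1} and Lemma \ref{FSP1b}. By linearity of the IBVP \eqref{LSP} I would first write $v = v_1 + v_2$, where $v_1 := \mathcal{W}_{bdr}^{(k)} \vec{h}$ solves \eqref{LSP} with $\phi \equiv 0$ and $f \equiv 0$, and $v_2$ solves the homogeneous-boundary IBVP \eqref{w1} with initial datum $\phi$ and forcing $f$. For $v_1$, Proposition \ref{l1} instantly delivers
\[
\sum_{r=0}^{2}\sup_{x\in(0,L)}\|\partial^r_x v_1(x,\cdot)\|_{H^{\frac{s+1-r}{3}}(0,T)} \leq C\|\vec{h}\|_{\mathcal{H}^s_k(0,T)},
\]
so the whole content of the proposition is contained in the bound for $v_2$.

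For $v_2$, the lemma stated just before subsection \ref{sec22} provides the explicit representation
\[
v_2(x,t) = W_{\mathbb{R},k}(t)\phi^*(x) + \int_0^t W_{\mathbb{R},k}(t-\tau) f^*(\cdot,\tau)(x)\,d\tau - [\mathcal{W}_{bdr}^{(k)} \vec{q}_k](x,t),
\]
where $\phi^* = B_s\phi$, $f^*(\cdot,\tau) = B_s f(\cdot,\tau)$ are the Sobolev extensions from $(0,L)$ to $\mathbb{R}$, and $\vec{q}_k = \mathcal{B}_{k,0}\!\left(W_{\mathbb{R},k}(t)\phi^* + \int_0^t W_{\mathbb{R},k}(t-\tau) f^*\,d\tau\right)$. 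The two Cauchy-problem pieces are controlled directly by Lemma \ref{FSP1b}, which yields the full sharp Kato bound on all of $\mathbb{R}$ in the required temporal Sobolev norms, with constants depending only on $\|\phi^*\|_{H^s(\mathbb{R})} \lesssim \|\phi\|_{H^s(0,L)}$ and $\|f^*\|_{L^1(0,T;H^s(\mathbb{R}))} \lesssim \|f\|_{L^1(0,T;H^s(0,L))}$ by boundedness of the extension operator $B_s$.

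For the boundary correction $\mathcal{W}_{bdr}^{(k)}\vec{q}_k$, the corollary following Lemma \ref{FSP1b} says that $\vec{q}_k \in \mathcal{H}^s_k(0,T)$ with
\[
\|\vec{q}_k\|_{\mathcal{H}^s_k(0,T)} \leq C\bigl(\|\phi\|_{H^s(0,L)} + \|f\|_{L^1(0,T;H^s(0,L))}\bigr);
\]
then one more application of Proposition \ref{l1} furnishes the sharp Kato estimate for this last piece. Summing the three contributions gives \eqref{w7}.

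I expect the step requiring the most care, rather than the most work, to be the verification that the traces $\partial_x^j w(0,\cdot)$ and $\partial_x^j w(L,\cdot)$ (for $j=0,1,2$) of the Cauchy evolution $w = W_{\mathbb{R},k}(t)\phi^* + \int_0^t W_{\mathbb{R},k}(t-\tau)f^*\,d\tau$ sit in precisely the product spaces \eqref{x-1} defining $\mathcal{H}^s_k(0,T)$. This matching is not a coincidence: the exponents $\tfrac{s+1-j}{3}$ appearing in each factor of $\mathcal{H}^s_k(0,T)$ were designed under assumptions (A1)–(A2), (B1)–(B2), (C) so that each component of $\mathcal{B}_{k,0}$ picks out exactly the trace whose regularity is guaranteed by the sharp Kato smoothing of Lemma \ref{FSP1b}. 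Once this bookkeeping is checked, no new analysis is required: the proof of Proposition \ref{p4} reduces to an additive combination of the three estimates above.
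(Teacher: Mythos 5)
Your proposal is correct and follows essentially the same route the paper intends: the paper itself offers no detailed argument for Proposition \ref{p4}, stating only that it ``follows from Proposition \ref{l1} and Lemma \ref{FSP1b},'' and your decomposition into the boundary integral piece $\mathcal{W}_{bdr}^{(k)}\vec{h}$ (handled by Proposition \ref{l1}), the whole-line Cauchy pieces (handled by Lemma \ref{FSP1b} together with the boundedness of the extension operator), and the boundary correction $\mathcal{W}_{bdr}^{(k)}\vec{q}_k$ (handled by the trace corollary plus another application of Proposition \ref{l1}) is exactly the assembly the paper has in mind. Your closing remark about verifying that the traces of the Cauchy evolution land in the product spaces $\mathcal{H}^s_k(0,T)$ is precisely the content of the corollary following Lemma \ref{FSP1b}, so no gap remains.
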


\section{Nonlinear problems}\label{sec3}
\setcounter{equation}{0}
In this section, we will  consider the IBVP of the nonlinear KdV equation on $(0,L)$ with the general boundary  conditions      \begin{equation}\label{3.1}
        \begin{cases}
           u_t+u_{xxx}+u_x +uu_x=0, & x\in (0,L),  \   t>0 \\
           u(x,0)=\phi(x),& x\in (0,L),\\
           \mathcal{B}_{k}\vec{v}(t)=\vec{h}(t), &t\geq0,
        \end{cases}
    \end{equation}
where  the boundary operators $\mathcal{B}_{k} $,  $k=1,2,3,4$,  are introduced in the introduction.

For given $s\geq 0$ and $T>0$, let
\[Y_{s,T}:=\left \{ w\in Z_{s,T};  \sum_{l=0}^2\|\partial^l_xw(x,\cdot)\|_{L_x^{\infty}(\mathbb{R};H^{\frac{s+1-l}{3}}(0,T))} < +\infty  \right \} \]
and
\[ \| w\|_{Y_{s,T}} := \left ( \|w\|_{Z_{s,T}}^2 +  \sum_{l=0}^2\|\partial^l_xw(x,\cdot)\|_{L_x^{\infty}(\mathbb{R};H^{\frac{s+1-l}{3}}(0,T))}^2 \right )^{\frac12}.\]
The next lemma is helpful in establishing the well-posedness of \eqref{3.1} whose proof can be found in \cite{BSZ03FiniteDomain, KrZh}.
\begin{lem}\label{lem1}
There exists a $C>0$  and $\mu >0$  sucht for any  $T>0$  and $u, \ v\in Y_{0,T}$,

\begin{equation}\label{Ys,t estimate}
\int\limits_0^T\|uv_x\|_{L^2(0,L)}\;d\tau\leq C
(T^{\frac{1}{2}}+T^{\frac13})\|u\|_{Y_{0,T}}\|v\|_{Y_{0,T}}
\end{equation}
and
\[ \|{\cal B}_{k,1} v\|_{{\cal H}^0_k (0,T)} \leq CT^{\mu} \| {\cal B}_{k,1} v\|_{Y_{0,T}},\]
for $k=1,2,3,4.$
\end{lem}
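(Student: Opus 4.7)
The lemma consists of two independent estimates, both of which follow standard patterns from KdV well-posedness on bounded intervals. The bilinear estimate is the workhorse for the contraction in the forthcoming fixed-point argument for $\Gamma$, and the boundary-operator estimate handles the lower-order perturbation $\mathcal{B}_{k,1}v$ that is shifted onto the boundary data side in the definition of $\Gamma$.

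For the bilinear bound, I would start from the pointwise-in-$t$ H\"older inequality
\[
\|uv_x\|_{L^2(0,L)} \leq \|u(\cdot,t)\|_{L^\infty(0,L)}\,\|v_x(\cdot,t)\|_{L^2(0,L)},
\]
and then apply the one-dimensional Gagliardo--Nirenberg inequality
\[
\|u(\cdot,t)\|_{L^\infty(0,L)} \leq C\bigl(\|u(\cdot,t)\|_{L^2(0,L)}^{1/2}\|u_x(\cdot,t)\|_{L^2(0,L)}^{1/2} + \|u(\cdot,t)\|_{L^2(0,L)}\bigr).
\]
This converts $L^\infty_x$ control into a mix of the $L^\infty_t L^2_x$ and $L^2_t H^1_x$ ingredients that are packaged inside the $Y_{0,T}$ norm. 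Integrating in $t$ and applying H\"older with two different exponent pairs then generates the two advertised powers: $T^{1/2}$ comes from a Cauchy--Schwarz pairing of $\|u\|_{L^2_x}$ (controlled in $L^\infty_t$) against $\|v_x\|_{L^2_x}$ (controlled in $L^2_t$), while $T^{1/3}$ arises from invoking the sharp Kato-smoothing clause $v_x \in L^\infty_x L^2_t$ contained in $Y_{0,T}$ and balancing via an additional time-H\"older.

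For the boundary-operator estimate, I note that each component of $\mathcal{B}_{k,1}v$ is a finite linear combination of the boundary traces $v(0,\cdot)$, $v(L,\cdot)$, $v_x(0,\cdot)$, $v_x(L,\cdot)$. The sharp Kato-smoothing clause embedded in the definition of $Y_{0,T}$ already places these traces in $H^{(1-l)/3}(0,T)$ with norms controlled by $\|v\|_{Y_{0,T}}$ (this is the intended reading of the right-hand side; the repeated $\mathcal{B}_{k,1}v$ appears to be a typographical slip for $v$). Comparing against the target space $\mathcal{H}^0_k(0,T)$, which demands these same components in a strictly weaker $H^{s'}_0$ norm than $Y_{0,T}$ delivers, produces a positive power of $T$ via the embedding $H^s_0(0,T] \hookrightarrow H^{s'}_0(0,T]$, whose operator norm is bounded by $C T^{s-s'}$ for $0 \leq s' < s \leq 1$ when the functions vanish at $t=0$. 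Taking $\mu$ equal to the minimum such gap across the four cases $k=1,2,3,4$ yields the claim.

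The main obstacle is hitting the precise powers $T^{1/2}$ and $T^{1/3}$ in the bilinear estimate: a naive H\"older in time either gives no $T$ decay at all (useless for a contraction) or the wrong exponent, so one is forced to balance the Sobolev regularity $L^2_t H^1_x$ against the Kato regularity $L^\infty_x L^2_t$ in tandem, which is the technical heart of the argument. Fortunately, the requisite calculations are essentially those carried out in \cite{BSZ03FiniteDomain,KrZh}, so the proof can be adapted with only cosmetic changes to match the $Y_{0,T}$ space used here.
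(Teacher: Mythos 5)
The paper does not actually prove this lemma; it only remarks that ``the proof can be found in \cite{BSZ03FiniteDomain,KrZh}'', and your sketch follows exactly the standard route of those references (H\"older plus the one-dimensional Gagliardo--Nirenberg inequality in $x$ followed by H\"older in $t$ for the bilinear bound, and the Kato-smoothing trace bounds built into $Y_{0,T}$ plus a $T^{\mu}$-gaining comparison of temporal Sobolev norms for the boundary term), so it is essentially the same approach. The one step to state more carefully is the claim that $H^{s}(0,T)\hookrightarrow H^{s'}(0,T)$ has operator norm $CT^{s-s'}$: for the intrinsic (restriction or Slobodeckij) norms this is false even for $s\leq 1/2$ (constant functions give a counterexample), and the correct version --- the one used in the cited works --- measures the $H^{s}_0(0,T]$ norm via extension by zero to $\mathbb{R}$ and then gains the power of $T$ through a H\"older-in-time/Sobolev-embedding argument rather than through a bare inclusion of Sobolev spaces on $(0,T)$.
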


Consider the following linear IBVPs
      \begin{equation}\label{3.1-a}
        \begin{cases}
           v_t+v_{xxx} +\delta _k v=f, & x\in (0,L),  \  t>0 \\
           v(x,0)=\phi(x),& x\in (0,L),\\
           \mathcal{B}_{k,0}v=\vec{h}, &t\geq0,
        \end{cases}
    \end{equation}
for $k=1,2,3,4$. The following lemma follows from the discussion in the Section \ref{sec2}, therefore, the proof will be omitted.

\begin{lem}\label{lem1-a}
Let $T>0$  be given.  There exists a constant $C>0$ such that for any $(\phi,\vec{h})\in X^k_{0,T}$ and $f\in L^1 (0,T; L^2 (0,L))$, the IBVP \eqref{3.1-a} admits a unique solution $v\in Y_{0,T}$ satisfying
    \[ \| v\|_{Y_{0,T}}\leq
    C\left (\| (\phi,\vec{h})\|_{X^k_{0,T}}+\|f\|_{L^1 (0,T; L^2 (0,L))}\right ),\]
for $k=1,2,3,4$.
\end{lem}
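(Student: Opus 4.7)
The strategy is to observe that the norm on $Y_{0,T}$ splits precisely into two pieces, each of which is separately controlled by a proposition already proved in Section \ref{sec2}. By the definition given just above the lemma,
$$\|v\|_{Y_{0,T}}^2 = \|v\|_{Z_{0,T}}^2 + \sum_{l=0}^2 \|\partial_x^l v(x,\cdot)\|_{L^\infty_x(\r; H^{(1-l)/3}(0,T))}^2.$$
Proposition \ref{p3} with $s=0$ supplies the estimate for the first summand in terms of $\|(\phi,\vec h)\|_{X^k_{0,T}} + \|f\|_{L^1(0,T;L^2(0,L))}$, while Proposition \ref{p4} with $s=0$ supplies an estimate for the second summand with exactly the same right-hand side. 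Adding the two inequalities and taking a square root yields the stated bound on $\|v\|_{Y_{0,T}}$, uniformly for $k=1,2,3,4$, since both propositions cover all four boundary configurations.

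Existence and uniqueness require essentially no extra work. Proposition \ref{p3} already asserts the existence of a unique $v\in Z_{0,T}$ solving \eqref{3.1-a}; Proposition \ref{p4} shows that this same $v$ automatically lies in the smaller space $Y_{0,T}$, and uniqueness in $Y_{0,T}$ is inherited from uniqueness in $Z_{0,T}$. Concretely, the solution is the one constructed in Section \ref{sec21}: extend $\phi$ and $f$ from $(0,L)$ to $\r$ by the operator $B_0$, solve the Cauchy problem via $W_{\r,k}$, and correct by the boundary integral operator $\mathcal{W}_{bdr}^{(k)}$ applied to $\vec h$ minus the trace of the full-line solution, so that $\mathcal{B}_{k,0}v = \vec h$.

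No genuine obstacle remains: the real analytical content — the explicit boundary integral representations derived in Section \ref{sec21}, the asymptotic decay estimates for the symbols $\Delta^{+,k}_{j,m}/\Delta^{+,k}$ tabulated in Lemma \ref{l5-abcd}, and the sharp Kato smoothing for the full-line problem in Lemma \ref{FSP1b} — has already been assembled. The present lemma is nothing more than the $s=0$ specialization of Propositions \ref{p3} and \ref{p4}, repackaged into the functional framework $Y_{0,T}$ that is tailored to the contraction mapping argument for the nonlinear IBVP \eqref{3.1}; accordingly, the proof is omitted in the paper.
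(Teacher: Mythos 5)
Your proposal is correct and follows exactly the route the paper intends: the lemma is the $s=0$ case of Propositions \ref{p3} and \ref{p4} (whose hypotheses hold since $0\neq\frac{2j-1}{2}$), with the two halves of the $Y_{0,T}$ norm controlled by each proposition respectively and uniqueness inherited from $Z_{0,T}$. The paper omits the proof for precisely this reason, so there is nothing further to add.
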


Next, we consider the following linearized IBVP associated to \eqref{3.1}
       \begin{equation}\label{3.2}
        \begin{cases}
           v_t+v_x+v_{xxx}+(a(x,t)v)_x =f, & x\in (0,L),  \   t>0 \\
           v(x,0)=\phi(x),& x\in (0,L),\\
           \mathcal{B}_{k}\vec{v}(t)=\vec{h}(t), & t\geq0,
        \end{cases}
    \end{equation}
for $k=1,2,3,4$ and $a(x,t)$ is a given function.
\begin{prop}\label{VC}
Let $T>0$   be given. Assume that $a\in Y_{0,T}$. Then for any $(\phi , \vec{h}) \in X^k_{0,T}$ and $f\in L^1(0,T;L^2(0,L))$, the IBVP \eqref{3.2} admits unique solution
\[ v\in Y_{0,T}.\]
Moreover, there exists a constant $C>0$ depending only on $T$ and $\|a\|_{Y_{0,T}}$ such that
 \[ \| v\|_{Y_{0,T}}\leq
    C\left (\| (\phi , \vec{h})\|_{X^k_{0,T}} +\|f\|_{L^1(0,T; L^2(0,L))}\right ).\]
\end{prop}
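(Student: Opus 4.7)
Since \eqref{3.2} is linear, my plan is to construct $v$ by a Banach fixed point argument on a short time interval $[0,T']$, using Lemma \ref{lem1-a} for the constant-coefficient, principal-boundary problem together with the $T$-gains of Lemma \ref{lem1}, and then to extend to $[0,T]$ by iteration. The key observation is that all the deviations from the operator treated in Lemma \ref{lem1-a}, namely the transport term $v_x$, the variable-coefficient term $(av)_x$, and the lower-order boundary piece $\mathcal{B}_{k,1}v$, can be moved to the source and to the data, and each of them carries a gain of a positive power of $T'$ under the $Y_{0,T'}$-norm.

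For $0<T'\le T$ to be chosen, define $\Lambda:Y_{0,T'}\to Y_{0,T'}$ by $\Lambda(w)=v$, where
\begin{equation*}
\begin{cases}
v_t+v_{xxx}+\delta_k v = f - w_x - (aw)_x + \delta_k w,&\ x\in(0,L),\ t\in(0,T'),\\
v(x,0)=\phi(x),\\
\mathcal{B}_{k,0}v = \vec{h} - \mathcal{B}_{k,1}w.
\end{cases}
\end{equation*}
Lemma \ref{lem1-a} gives
\begin{align*}
\|\Lambda(w)\|_{Y_{0,T'}} &\le C\bigl(\|\phi\|_{L^2}+\|\vec{h}\|_{\mathcal{H}^0_k(0,T')}+\|\mathcal{B}_{k,1}w\|_{\mathcal{H}^0_k(0,T')}\\
&\quad+\|f\|_{L^1(0,T';L^2)}+\|w_x\|_{L^1(0,T';L^2)}+\|(aw)_x\|_{L^1(0,T';L^2)}+\delta_k\|w\|_{L^1(0,T';L^2)}\bigr).
\end{align*}
Writing $(aw)_x=aw_x+a_x w$ and applying the bilinear bound of Lemma \ref{lem1} in both orderings yields $\|(aw)_x\|_{L^1L^2}\le C(T'^{1/2}+T'^{1/3})\|a\|_{Y_{0,T'}}\|w\|_{Y_{0,T'}}$; Cauchy--Schwarz in $t$ together with $\|w_x\|_{L^2(0,T';L^2)}\le\|w\|_{Z_{0,T'}}\le\|w\|_{Y_{0,T'}}$ gives $\|w_x\|_{L^1L^2}\le T'^{1/2}\|w\|_{Y_{0,T'}}$; and the boundary estimate of Lemma \ref{lem1} produces $\|\mathcal{B}_{k,1}w\|_{\mathcal{H}^0_k(0,T')}\le CT'^{\mu}\|w\|_{Y_{0,T'}}$. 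Collecting,
\[\|\Lambda(w)\|_{Y_{0,T'}}\le C_0\bigl(\|(\phi,\vec{h})\|_{X^k_{0,T'}}+\|f\|_{L^1(0,T';L^2)}\bigr)+\theta(T',\|a\|_{Y_{0,T}})\|w\|_{Y_{0,T'}},\]
with $\theta\downarrow0$ as $T'\downarrow0$. The difference $\Lambda(w_1)-\Lambda(w_2)$ solves the same IBVP with zero initial and boundary data and with $f$ replaced by $-(w_1-w_2)_x-(a(w_1-w_2))_x+\delta_k(w_1-w_2)$, so the identical computation yields $\|\Lambda(w_1)-\Lambda(w_2)\|_{Y_{0,T'}}\le\theta\|w_1-w_2\|_{Y_{0,T'}}$.

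Choosing $T'$ so that $\theta\le 1/2$ makes $\Lambda$ a contraction on a closed ball in $Y_{0,T'}$, and its fixed point is the unique local solution of \eqref{3.2}, already satisfying the claimed estimate on $[0,T']$. Since $T'$ depends only on $\|a\|_{Y_{0,T}}$, the construction can be iterated on $[T',2T']$, $[2T',3T']$, $\dots$ with $v(\cdot,nT')$ as new initial datum and with $\vec{h},f$ restricted to each subinterval, extending the solution to $[0,T]$ in finitely many steps; the intermediate bounds then combine into the global estimate. The main obstacle is the boundary trace bound $\|\mathcal{B}_{k,1}w\|_{\mathcal{H}^0_k(0,T')}\le CT'^{\mu}\|w\|_{Y_{0,T'}}$: it requires transferring the sharp Kato smoothing traces of $w$ and $w_x$ at $x=0,L$ (which live in $H^{1/3}$, $H^{0}$) into the components of $\mathcal{H}^0_k$ at a lower Sobolev index, and the gain $T'^{\mu}$ comes precisely from this excess of regularity combined with the zero-at-$t=0$ structure of the spaces $H_0^{\cdot}(0,T']$ appearing in $\mathcal{H}^0_k$. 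A secondary subtlety is that the iteration's new initial data $v(\cdot,nT')$ must remain in the class for which Lemma \ref{lem1-a} applies, which follows because $v\in C([0,T'];L^2)\subset Z_{0,T'}$ automatically and the prescribed boundary values propagate the compatibility encoded in $X^k_{0,T'}$.
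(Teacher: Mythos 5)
Your argument is essentially the paper's own proof: the authors define the same map (their $\Gamma$, your $\Lambda$) by transferring $-u_x-(au)_x+\delta_k u$ to the source and $-\mathcal{B}_{k,1}u$ to the boundary data, invoke Lemma \ref{lem1-a} together with the $\theta^{1/2}+\theta^{1/3}$ and $\theta^{\mu}$ gains of Lemma \ref{lem1} to obtain a contraction on a ball in $Y_{0,\theta}$, and extend to $[0,T]$ by iterating since $\theta$ depends only on $\|a\|_{Y_{0,T}}$. Your write-up is correct and, if anything, slightly more careful (you keep $f$ explicitly in the iterated equation and spell out the splitting $(aw)_x=aw_x+a_xw$).
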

\begin{proof}
Let $r>0$ and $0<\theta\leq T$ be a constant to be determined. Set
$$S_{\theta,r}:=\{ u\in Y_{0,\theta}: \|u\|_{Y_{0,\theta}}\leq r\},$$
which is a bounded closed convex subset of $Y_{0,\theta}$.  For given $(\phi , \vec{h}) \in X^k_{0,T}$, $a\in Y_{0,T}$ and $f\in L^1 (0,T;L^2(0,L))$, define a map $\Gamma$ on $S_{\theta,r}$ by
 $$v=\Gamma(u)$$
for any $u\in S_{\theta,r}$   where  $v$ is  the unique solution of
      \begin{equation}\label{3.2c}
        \begin{cases}
        v_t+v_{xxx} +\delta _k v=-u_x-(a(x,t)u)_x +\delta _k u  & x\in (0,L) ,  \   t\geq0 \\
        v(x,0)=\phi(x),& x\in (0,L)\\
        \mathcal{B}_{k,0}\vec{v}(t)=\vec{h}(t)-{\cal B}_{k,1} u,& t\geq0,
        \end{cases}
    \end{equation}
    By  Lemma \ref{lem1-a} (see also Propositions \ref{p3} and \ref{p4}),  for any $u,w\in S_{\theta,r}$,
    \begin{eqnarray*}
     \| \Gamma (u)\|_{Y_{0,\theta}}
      &\leq &
      C_1\left (\|(\phi, \vec{h}) \|_{X^0_{k,T}} +\| f\|_{L^1 (0,T; L^2 (0,L))} \right ) \\ &&+C_2 \|B_{k,1} v\|_{{\cal H}^0_{k} (0,\theta)} + C_3  \| (av)_x\|_{L^1 (0,\theta; L^2 (0,L))} \\
     &\leq &
       C_1 \left (\|(\phi, \vec{h})\|_{X^0_{k,T}} +\| f\|_{L^1 (0,T; L^2 (0,L))}\right ) + \left (C_2 \theta ^{\mu} + \left [\theta ^{\frac12} +\theta ^{\frac13}\right ]
       \| a\|_{Y_{0,T} } \right ) \| v\| _{Y_{0, \theta }}   \end{eqnarray*}
 and
 $$\|\Gamma(w)-\Gamma(u)\|_{Y_{0,\theta}} \leq  \left (C_2 \theta ^{\mu} + \left [\theta ^{\frac12} +\theta ^{\frac13}\right ]
       \| a\|_{Y_{0,T} } \right )\|w-u\|_{Y_{0,\theta}}.$$
  Thus $\Gamma$ is a contraction mapping from $S_{r,\theta}$ to $S_{r,\theta}$  if one chooses $r$ and $\theta $ by
\[r=2C_0\left (\| (\phi , \vec{h})\|_{X^k_{0,T}} +\|f\|_{L^1 (0,T; L^2(0,L))} \right )\]
and
\[\left (C_2 \theta ^{\mu} + \left [\theta ^{\frac12} +\theta ^{\frac13}\right ]
       \| a\|_{Y_{0,T} } \right )\leq\frac{1}{2}.\]
Its fixed point $v=\Gamma(u)$ is desired solution of \eqref{3.2c} in the time interval $[0,\theta]$. Note that $\theta $ only depends on
$\| a\|_{Y_{0,T} }$
thus by standard extension argument, the solution $v$ can be extended to the time interval $[0,T]$. Thus, the proof is completed.
\end{proof}

    \smallskip
 Now, we turn to consider the well-posedness problem of the nonlinear IBVP \eqref{3.1}.

    \begin{thm} Let $s\geq 0$ with
$s\neq\frac{2j-1}{2}, \text{} \text{}j=1,2,3..., $  $T>0$  and $r>0$ be given.  There exists a $T^* \in (0,T]$  such that for any $(\phi, \vec{h})\in X^0_k (0,T)$  with
\[ \| (\phi, \vec{h})\|_{X^s_k (0,T)} \leq r,\]
the IBVP (\ref{3.1}) admits a unique solution $u\in Y_{s, T^*}$ . Moreover, the corresponding solution map is real analytic.
\end{thm}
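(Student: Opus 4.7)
The plan is to prove the nonlinear theorem by a Banach fixed point argument applied to the map $\Gamma$ already introduced via the IBVP (\ref{y-2}). That is, for each $v$ in a suitable ball, define $u=\Gamma(v)$ as the solution of the linear problem
\[
\begin{cases}
u_t+u_{xxx}+\delta_k u = -v_x-v v_x+\delta_k v, & x\in(0,L),\; t>0,\\
u(x,0)=\phi(x),\\
\mathcal{B}_{k,0}u=\vec{h}-\mathcal{B}_{k,1}v.
\end{cases}
\]
The existence, uniqueness, and the appropriate $Y_{s,T}$-estimate for $u$ at each step are provided by the linear theory already built in Section \ref{sec2} (Propositions \ref{p3}, \ref{p4}, and Lemma \ref{lem1-a}, extended from $s=0$ to $0\leq s\leq 3$), or equivalently by Proposition \ref{VC} after incorporating the term $v v_x$ into the forcing.

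First I would treat the case $s=0$. Fix $r>0$ and pick a $T^*\in(0,T]$ to be chosen later, and set
\[
S_{T^*,R}:=\{v\in Y_{0,T^*}:\|v\|_{Y_{0,T^*}}\leq R\}, \qquad R=2C_0 r,
\]
where $C_0$ is the constant from the linear estimate. Using Lemma \ref{lem1-a} applied to $\Gamma(v)$ and the bilinear/boundary bounds in Lemma \ref{lem1}, I obtain
\[
\|\Gamma(v)\|_{Y_{0,T^*}}\leq C_0\|(\phi,\vec h)\|_{X^0_k(0,T)}+C_1\bigl((T^*)^{1/2}+(T^*)^{1/3}\bigr)\|v\|_{Y_{0,T^*}}^2+C_2(T^*)^{\mu}\|v\|_{Y_{0,T^*}},
\]
and an entirely analogous bound on $\|\Gamma(v)-\Gamma(w)\|_{Y_{0,T^*}}$ with the quadratic factor replaced by $R\,\|v-w\|_{Y_{0,T^*}}$. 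Choosing $T^*$ small enough so that the $T^*$-powers absorb both the Lipschitz constant and the self-mapping constant yields a contraction of $S_{T^*,R}$ into itself; the fixed point is the desired $Y_{0,T^*}$ solution.

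Next, for general $s\geq 0$ with $s\neq(2j-1)/2$, I would propagate regularity. Since the solution already lies in $Y_{0,T^*}$, one views $u$ as satisfying the linearized equation (\ref{3.2}) with coefficient $a=u\in Y_{0,T^*}$; Proposition \ref{VC} together with the higher-$s$ versions of Propositions \ref{p3}--\ref{p4} (valid for $0\leq s\leq 3$, $s\neq(2j-1)/2$) then shows that the unique $Y_{0,T^*}$ solution automatically inherits $Y_{s,T^*}$ regularity whenever $(\phi,\vec h)\in X^s_k(0,T)$. For $s>3$, one differentiates the equation in time (using $u_t=-u_x-uu_x-u_{xxx}$) and bootstraps in the standard way, or equivalently reiterates the fixed point construction in the space $Y_{s,T^*}$ after establishing the corresponding bilinear estimate on $uu_x$ in $L^1(0,T^*;H^s(0,L))$, which follows from the Kato trace bounds $\partial_x^\ell u\in L^\infty_x(H^{(s+1-\ell)/3})$ encoded in the $Y_{s,T^*}$-norm.

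Finally, the analyticity of the solution map will follow automatically from the contraction construction: the data-to-solution map is defined implicitly by $F(u,\phi,\vec h)=0$ where $F$ is the (polynomial, hence real analytic) map
\[
F(u,\phi,\vec h):=u-\Gamma_{\phi,\vec h}(u),
\]
whose Fr\'echet derivative in $u$ at the fixed point is invertible (it is $I-\Gamma'_u$ and $\Gamma$ is a contraction), so the analytic implicit function theorem in Banach spaces applies. The main obstacle I expect is not conceptual but technical: verifying that the bilinear estimate in Lemma \ref{lem1} and the boundary trace estimate $\|\mathcal{B}_{k,1}v\|_{\mathcal{H}^s_k}\leq CT^\mu\|v\|_{Y_{s,T}}$ extend cleanly to the range $s\geq 0$, $s\neq(2j-1)/2$ required by the theorem; the exclusion of the half-integer exponents reflects precisely the compatibility thresholds of the spaces $H^{(s+1)/3}_0$, $H^{s/3}_0$, $H^{(s-1)/3}_0$ appearing in $\mathcal{H}^s_k(0,T)$, and one must ensure the zero extension and trace operators behave correctly off these critical values.
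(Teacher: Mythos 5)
Your $s=0$ argument coincides with the paper's: both run a contraction for the map $\Gamma$ defined through the linear IBVP with forcing $-v_x-vv_x+\delta_k v$ and corrected boundary data $\vec h-\mathcal{B}_{k,1}v$, using Lemma \ref{lem1-a} (equivalently Proposition \ref{VC}) together with the bilinear and boundary-trace bounds of Lemma \ref{lem1}; the analyticity claim is likewise extracted from the contraction structure in both treatments.

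The genuine gap is in your treatment of $0<s\leq 3$. You assert that the $Y_{0,T^*}$ solution ``automatically inherits'' $Y_{s,T^*}$ regularity by viewing $u$ as a solution of the linearized problem (\ref{3.2}) with coefficient $a=u\in Y_{0,T^*}$ and invoking ``higher-$s$ versions'' of Propositions \ref{p3}--\ref{p4} and \ref{VC}. No such version is available with only $a\in Y_{0,T^*}$: to close an estimate for (\ref{3.2}) in $Y_{s,T}$ one must control $\|(av)_x\|_{L^1(0,T;H^s(0,L))}$, and the term $a_xv$ cannot be placed in $H^s(0,L)$ for $s$ up to $3$ when $a$ carries only $Y_{0,T}$ regularity; the same circularity appears if one instead treats $uu_x$ as a forcing in Proposition \ref{p3}, since that presupposes $uu_x\in L^1(0,T;H^s)$, essentially the regularity being proved. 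The paper avoids this by a two-step device you do not reproduce: for $s=3$ it differentiates in time, so that $v=u_t$ solves the variable-coefficient problem (\ref{3.2-1}) with coefficient $a=\tfrac12 u\in Y_{0,T^*}$ and datum $\phi^*=-\phi'-\phi'''\in L^2(0,L)$, for which only the $s=0$ case of Proposition \ref{VC} is needed; then $u_{xxx}=-u_t-u_x-uu_x\in Y_{0,T^*}$ yields $u\in Y_{3,T^*}$. The intermediate range $0<s<3$ is then handled by Tartar's nonlinear interpolation theorem \cite{tartar}, which is the essential ingredient for non-integer $s$ and is absent from your proposal. Your alternative suggestion of re-running the contraction directly in $Y_{s,T^*}$ with a bilinear estimate in $L^1(0,T;H^s)$ is a legitimate different route, but it would require actually proving that estimate throughout the stated range of $s$ rather than citing it.
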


\begin{proof}    We only prove the theorem in the case  of  $0\leq s\leq 3$.   When $s>3$  it follows from  a  standard procedure  developed in  \cite{BSZ02}.
 First we consider the case of $s=0$. As in the proof of Proposition \ref{VC}, let $r>0$ and $0<\theta\leq T$ be a constant to be determined. Set
\[ S_{\theta,r}:=\{u\in Y_{s,\theta}: \|u\|_{Y_{s,\theta}}\leq r\},\]
For given $(\phi, \vec{h})\in X^0_{k,T}$, define a map $\Gamma$ on $S_{\theta,r}$ by
$$v=\Gamma(u) \quad for \ u\in Y_{0,\theta}
$$ where $v$  is the  unique solution of
      \begin{equation}\label{3.1c}
        \begin{cases}
           v_t+v_{xxx} +\delta _k v=-u_x-uu_x +\delta _k u, & x\in (0,L),  \   t\geq0 \\
           v(x,0)=\phi(x),& x\in (0,L),\\
           \mathcal{B}_{k} \vec{v}(t)=\vec{h}(t),& t\geq0 .
        \end{cases}
    \end{equation}
  By Proposition \ref{VC},  for any $u, \ w\in S_{\theta ,r}$,
  $$\|\Gamma (u)\|_{Y_{0,\theta}}\leq C_0\|(\phi,\vec{h})\|_{X^k_{0,T}}+ C_1 \theta \| u\|_{Y_{0,\theta }}+ C_2 \left (\theta ^{1/3} +\theta ^{1/2} \right ) \|u\|_{Y_{0,\theta}}^2$$
  and
  \[ \| \Gamma (u)-\Gamma (w)\|_{Y_{0,\theta}} \leq C_1 \theta \| u-w\|_{Y_{0,\theta }} +\frac{ C_2}{2} \left (\theta ^{1/3} +\theta ^{1/2} \right ) \|u+w\|_{Y_{0,\theta}}\|u-w\|_{Y_{0,\theta}} .\]
  Choosing $r$ and $\theta $  with
  \[  r=2C_0\|(\phi,\vec{h})\|_{X^k_{0,T}}, \qquad  C_1 \theta + C_2 \left (\theta ^{1/3} +\theta ^{1/2} \right )r \leq \frac12, \]
  $\Gamma$ is a contraction whose critical point is the desired solution.

Next we consider the case of $s=3$.  Let $v=u_t$ we have $v$ solves
\begin{equation}\label{3.2-1}
        \begin{cases}
           v_t+v_x+v_{xxx}+(a(x,t)v)_x=0, & x\in (0,L),  \   t>0 \\
           v(x,0)=\phi ^ *(x),& x\in (0,L),\\
           \mathcal{B}_{k}\vec{v}(t)=\vec{h}'(t), & t\geq0,
        \end{cases}
    \end{equation}
    where $\phi^* (x) =-\phi' (x)-\phi'''(x)$ and $a(x,t)=\frac12 u (x,t)$. Applying Proposition \ref{VC} implies that $v=u_t \in Y_{0,T^*}$. Then it
    follows from the equation
    \[ u_t +u_x +uu_x +u_{xxx}=0 \]
    that $u_{xxx}\in Y_{0,T^*} $ and  $u\in Y_{3,T^*}$.  The case of $0< s< 3$ follows using Tartart's nonlinear interpolation theory \cite{tartar} and the proof is archived.
 \end{proof}

\section{Concluding remarks}\label{sec4}
\setcounter{equation}{0}
In this paper we have studied   the  nonhomogenous boundary value problem of the KdV equation on the finite interval (0,L) with general boundary conditions,
\begin{equation}\label{4.1}
\begin{cases}
 u_t+u_x+u_{xxx}+uu_x=0, \qquad  0<x<L, \
 t>0\\
 u(x,0)=\phi(x)\\
{\cal B}_ku=\vec{h}
 \end{cases}
 \end{equation}
 and  have shown that the IBVP (\ref{4.1}) is locally well-posed in the  space $H^s (0,L)$   for any $s\geq 0$  with
$s\neq\frac{2j-1}{2}, \text{} \text{}j=1,2,3..., $ and $(\phi, \vec{h})\in X^s_{k,T}$.
 Two important tools have played indispensable  roles in   approach; one is the explicit representation of the boundary integral operators ${\cal W}_{bdr} ^{(k)}$ associated to the IBVP (\ref{4.1}) and the other one  is the sharp Kato smoothing property. We have obtained our results by first investigating  the associated linear IBVP
 \begin{equation}\label{4.1-1}
\begin{cases}
 u_t+u_{xxx} +\delta _k u=f, \qquad  0<x<L, \
 t>0,\\
 u(x,0)=\phi(x),\\
{\cal B}_{k,0}u=\vec{h}. \end{cases}
 \end{equation}
 The local well-posedness of the nonlinear IBVP \eqref{4.1} follows via {\em contraction mapping principe}.

 While the results reported in this paper has significantly improved the  earlier  works on general boundary value problems of the KdV equation on a finite interval,
  there are still many questions to be addressed  regarding the IBVP (\ref{4.1}).  Here we list a few of them which are most interesting to us.

 \begin{itemize}
 \item[(1)]  {\em Is the IBVP (\ref{4.1}) globally well-posed in the space $H^s (0,L)$ for some $s\geq 0$ or equivalently,  does  any solution of the IBVP (\ref{4.1})  blow up in the some space $H^s (0,L)$ in finite time?}

 \smallskip
 It is not clear  if the IBVP (\ref{4.1}) is globally well-posed  or not even in the  case of $\vec{h}\equiv 0 $.   It follows from our results that a solution $u$ of the IBVP (\ref{4.1}) blows up in  the space $H^s (0,L)$   for some $s\geq 0$ at  a finite time $T>0$ if and only   if
 \[ \lim_{t\to T^-}\| u(\cdot, t) \|_{L^2 (0,L)} =+\infty .\]
 Consequently, it suffices to establish a global a priori  $L^2 (0,L)$ estimate
 \begin{equation}\label{priori} \sup _{0\leq t< \infty} \|u(\cdot, t)\|_{L^2 (0,L)} < +\infty ,\end{equation}
 for solutions of the IBVP (\ref{4.1}) in order to obtain the global well-posedness of the IBVP (\ref{4.1}) in the space $H^s (0,L)$ for any $s\geq 0$. However, estimate (\ref{priori}) is known to be held only in one case
 \[ \begin{cases}
 u_t+u_x +uu_x + u_{xxx}=f, \qquad  0<x<L, \
 t>0\\
 u(x,0)=\phi(x)\\
u(0,t)=h_1 (t), \ u(L,t)= h_2 (t), \ u_x (L,t) =h_3 (t). \end{cases}
\]

\item[(2)] {\em Is the IBVP well-posed in the space $H^s (0,L)$  for some $s\leq -1$?}

\smallskip
We have shown that the IBVP (\ref{4.1}) is locally well-posed in the space $H^s (0,L)$ for any $s\geq 0$. Our results can also be extended to the case of $-1< s\leq 0$ using the same approach developed in \cite{bsz-finite}.   For the pure initial value problems  (IVP) of the KdV equation posed on the whole line $\r$ or on torus $\mathbb{T}$,
\begin{equation}
\label{p-1}
u_t +uu_x +u_{xxx}=0, \quad u(x,0)= \phi (x), \quad x, \ t\in \r
\end{equation}
and
\begin{equation}
\label{p-2}
u_t +uu_x +u_{xxx}=0, \quad u(x,0)= \phi (x), \quad x \in \mathbb{T}, \ t\in \r ,
\end{equation}
it is well-known that the IVP (\ref{p-1}) is  well-posed in the space $H^s (\r)$ for any $s\geq -\frac34$ and is  (conditionally) ill-posed in the
space $H^s (\r) $ for any $s< -\frac34$ in the sense the corresponding solution map cannot be uniformly continuous.  As for the IVP (\ref{p-2}), it is well-posed in the space $H^s (\mathbb{T}) $ for any $s\geq -1$.   The solution map corresponding to the IVP (\ref{p-2}) is real analytic when $s>-\frac12$, but only continuous (not even  locally uniformly continuous) when $-1\leq s<-\frac12$. Whether  the IVP (\ref{p-1})  is well-posed  in the space $H^s(\r)$ for any $s<-\frac34$ or the IVP (\ref{p-2}) is well-posed in the space $H^s (\mathbb{T})$  for any $s< -1$ is still an open question. On the other hand, by contrast, the IVP of the KdV-Burgers equation
\[ u_t +uu_x +u_{xxx}-u_{xx}=0, \quad u(x,0)=\phi (x), \quad x\in \r, \ t>0 \]
is known to  be well-posed in the  space $H^s(\r) $ for any $s\geq -1$,  but  is known to be ill-posed for any $s<-1$. We conjecture that the IBVP (\ref{4.1}) is ill-posed in the space $H^s (0,L)$ for any $s<-1$.

\item[(3)] While the approach developed  in this paper can be used to study the nonhomogeneous boundary value problems of  the KdV equation on $(0,L)$ with quite general boundary conditions,  there are still  some boundary value  problems of the KdV equation that our approach do not work.  Among them the following two boundary value problems of the KdV equation on $(0,L)$ stand out:
\begin{equation}
\label{p-3}
\begin{cases}
u_t +uu_x +u_{xxx}=0, \quad x\in (0,L)\\
  u(x,0)= \phi (x), \\
  u(0,t)=u(L,t), \ u_x (0,t)=u_x (L,t), \ u_{xx} (0,t)= u_{xx}(L,t)
  \end{cases}
\end{equation}
and
\begin{equation}
\label{p-4}
\begin{cases}
u_t +uu_x +u_{xxx}=0, \quad x\in (0,L),\\
  u(x,0)= \phi (x),  \\
  u(0,t)=0, \ u (L,t)=0, \ u_{x} (0,t)= u_{x}(L,t) .
  \end{cases}
\end{equation}
A common feature for these two boundary value problems is that the $L^2-$norm of their solutions are conserved:
\[ \int ^L_0 u^2 (x,t) dx =\int ^L_0 \phi ^2 (x) dx \qquad \mbox{for any $t\in \r$}.\]
The IBVP (\ref{p-3}) is equivalent  to the IVP (\ref{p-2}) which was  shown by Kato \cite{Kato79,Kato83} to be well-posed in the space $H^s (\mathbb{T})$ when $s>\frac32$  as early as in the late 1970s.    Its well-posedness in the space $H^s (\mathbb{T})$ when $s\leq \frac32$ , however, was established 24  years later in the celebrated work of Bourgain \cite{Bourgain93a, Bourgain93b} in 1993. As for the IBVP (\ref{p-4}),  its associated linear problem
\begin{equation}
\label{p-5}
\begin{cases}
u_t  +u_{xxx}=0, \quad x\in (0,L),\\
  u(x,0)= \phi (x),
   u(0,t)=0, \\ u (L,t)=0, \ u_{x} (0,t)= u_{x}(L,t)
  \end{cases}
\end{equation}
has been shown by Cerpa  (see, for instance,  \cite{cerpatut})  to be well-posed  in the space $H^s (0,L)$ forward and backward in time. However, whether the nonlinear IBVP (\ref{p-4}) is well-posed in the space $H^s (0,L)$ for some $s$ is still unknown.
 \end{itemize}

\vglue 0.4 cm
\noindent\textbf{Acknowledgments:} Roberto Capistrano-Filho was supported by CNPq (Brazilian Technology Ministry), Project PDE, grant 229204/2013-9 and partially supported by CAPES (Brazilian Education Ministry) and Bing-Yu Zhang was partially supported by a grant from the Simons Foundation (201615), NSF of China (11571244, 11231007).
Part of this work was done during the post-doctoral visit of the first author at the University of Cincinnati, who thanks the host institution for the warm hospitality.

\end{document}